\newcommand{\be}{\begin{equation}}
\newcommand{\ee}{\end{equation}}
\newcommand{\cn}{{\rm \,cn}}
\newcommand{\sn}{{\rm \,sn}}
\newcommand{\dn}{{\rm \,dn}}
\newcommand{\R}{{\mathbb R}}
\newcommand{\Ker}{{\rm \,Ker}}
\newcommand{\K}{{\rm \,K}}
\newcommand{\E}{{\rm \,E}}
\numberwithin{equation}{section}
\numberwithin{figure}{section}
\newtheorem{theorem}{Theorem}[section]
\newtheorem{proposition}[theorem]{Proposition}
\newtheorem{remark}[theorem]{Remark}
\newtheorem{lemma}[theorem]{Lemma}
\newtheorem{corollary}[theorem]{Corollary}
\newtheorem{definition}[theorem]{Definition}
\begin{document}
\vglue-1cm \hskip1cm
\title[Cnoidal Waves for the KG and NLS]{Cnoidal Waves for the cubic nonlinear Klein-Gordon and Schr\"odinger Equations
}

\begin{center}

\subjclass[2000]{35Q51, 35Q55, 35Q70.}

\keywords{cubic Klein-Gordon equation, cubic Schr\"odinger equation, cnoidal waves, orbital instability, orbital stability.}

\maketitle

{\bf Guilherme de Loreno}

{Departamento de Matem\'atica - Universidade Estadual do Centro Oeste, Campus Cedeteg\\
	Alameda \'Elio Antonio Dalla Vecchia, 838, CEP 85040-167, Guarapuava, PR, Brazil.}\\
{ guilhermeloreno@unicentro.br}

{\bf Gabriel E. Bittencourt Moraes}

{Departamento de Matem\'atica - Universidade Estadual de Londrina\\
	Rodovia Celso Garcia Cid, PR-445, Km 380, CEP 86057-970, Londrina, PR, Brazil.}\\
{ ge.bitt.moraes@gmail.com}

{\bf F\'abio Natali}

{Departamento de Matem\'atica - Universidade Estadual de Maring\'a\\
	Avenida Colombo, 5790, CEP 87020-900, Maring\'a, PR, Brazil.}\\
{ fmanatali@uem.br}

{\bf Ademir Pastor}

Departamento do Matemática, Instituto de Matemática, Estatística e Computação Científica (IMECC), Universidade Estadual de Campinas (UNICAMP), \\
Rua S\'ergio Buarque de Holanda, 651,  13083-859, Campinas--SP, Brazil\\
{ apastor@ime.unicamp.br}

\vspace{3mm}

\end{center}

\begin{abstract}
In this paper, we establish orbital stability results for \textit{cnoidal} periodic waves of the cubic nonlinear Klein-Gordon and Schrödinger equations in the energy space restricted to zero mean periodic functions.  More precisely, for one hand, we prove that the cnoidal waves of the cubic Klein-Gordon equation are orbitally unstable as a direct application of the theory developed by Grillakis, Shatah, and Strauss. On the other hand, we show that the cnoidal waves for the Schrödinger equation are orbitally stable by constructing a suitable Lyapunov functional restricted to the associated zero mean energy space.  The spectral analysis of the corresponding linearized operators, restricted to the periodic Sobolev space consisting of zero mean periodic functions, is performed using the Floquet theory and a Morse Index Theorem.

\end{abstract}

\section{Introduction} 

This paper  presents new orbital stability results for the cubic Klein-Gordon equation (KG)
\begin{equation}\label{KF2}
u_{tt}-u_{xx}+u-u^{3}=0,
\end{equation} and the cubic nonlinear Schr\"odinger equation (NLS)
\begin{equation}\label{KF1}
i u_t+u_{xx}+|u|^2 u=0.
\end{equation}
In both equations, $u:\mathbb{R} \times \mathbb{R}_+ \rightarrow \mathbb{B}$ is an $L$-periodic function in the spatial variable, $\mathbb{B}=\mathbb{R}$ for the KG equation and $\mathbb{B}=\mathbb{C}$ for the NLS equation. The KG equation was first proposed as a relativistic
generalization of the Schr\"odinger equation describing
free particles; however, it also has several applications in physics and
engineering including quantum field theory,  dispersive wave phenomena, and vibrating
systems in classical mechanics (see, for instance, \cite{gravel}). Specifically, the cubic nonlinearity has been used as a model equation in field theory (see \cite{dashen}). The NLS equation also appears in many applications in physics and engineering, such as nonlinear optics, quantum mechanics, and nonlinear waves (see, for instance, \cite{boyd} and \cite{fibich}).

Let us first pay particular attention to \eqref{KF2}.
It is well known that \eqref{KF2} can be seen as an abstract Hamiltonian system. In fact, by setting
\begin{equation}\label{J}
J=\left(
\begin{array}{cc}
 0 & 1\\
-1 & 0\end{array}
\right),
\end{equation}
we see that $(\ref{KF2})$ writes as
\begin{equation}\label{hamilt31}
	\frac{d}{dt}U(t)=J E'(U),
\end{equation} 
where $U=(u,u_t)=:(u,v)$ and $E'$ indicates the Fr\'echet derivative of the conserved quantity 
\begin{equation}\label{E}
E(u,v)
=\displaystyle\frac{1}{2}\int_{0}^{L} \left[u_x^2+v^2+u^2\left(1-\frac{u^2}{2}\right)\right] \;dx.
\end{equation}
 Moreover, \eqref{KF2} also conserves the quantity  
\begin{equation}\label{F}
	F(u,v)=\displaystyle\int_{0}^{L}u_x v \; dx.
\end{equation}

An important mathematical aspect concerning equation \eqref{KF2} is the existence of periodic traveling wave solutions  of the form
\begin{equation}\label{PW}
u(x,t)=\varphi(x-ct),
\end{equation}
where $c \in \mathbb{R}$ represents the wave speed and $\varphi=\varphi_c: \mathbb{R}\rightarrow \mathbb{R}$ is an $L$-periodic smooth function, meaning that $\varphi(\xi+L)=\varphi(\xi)$ for all $\xi\in\mathbb{R}$. Existence and stability of periodic waves for the KG equations (in its complex form) have appeared in   \cite{NP2008}, where the authors determined the orbital stability of \textit{dnoidal} standing wave solutions of the form $u(x,t)=e^{i\omega t}\varphi(x)$, where $\omega\in\mathbb{R}$ is the so called frequency of the wave, and orbital instability of \textit{cnoidal} standing wave solutions. The main tool to obtain these results was the classical Grillakis, Shatah and Strauss theory in \cite{grillakis1} and \cite{grillakis2} in the periodic context. We highlight that the existence of standing waves and the spectral analysis of the corresponding linearized operator in the case of dnoidal and cnoidal solutions for the cubic complex KG equation is quite similar to that determined in \cite{angulo} for the cubic NLS where dnoidal and cnoidal standing waves have also been considered. In \cite{cardoso2013} the authors considered the quintic KG equation and showed the orbital instability of periodic \textit{dnoidal} waves restricted to perturbations in the subspace of the even periodic functions in $H_{per}^1$. The main tool used was a computational approach to obtain the behaviour of the non-positive spectrum of the linearized operator combined with the theory in \cite{grillakis1}. Linear and spectral stability of periodic waves to the KG equation with general power-like nonlinearity have appeared in \cite{dem} and \cite{hakkaev}. In all these works the perturbations were considered with the same period of the underlying wave.

 In \cite{bronski}, the authors proved spectral stability results for a general second order PDE using the quadratic pencils technique in order to obtain a precise counting for the Hamiltonian Krein Index $\mathcal{K}_{\rm Ham}$, an important tool which provides us with indications for determining the spectral stability and instability for abstracts Hamiltonian systems. In fact, if $\mathcal{K}_{\rm Ham}=0$, the periodic wave is spectrally stable and orbital stability results can be determined if the associated Cauchy problem has global solutions in time for arbitrary initial data in the energy space. Now, if $\mathcal{K}_{\rm Ham}$ is an odd positive number, the periodic wave is spectrally unstable, while if $\mathcal{K}_{\rm Ham}$ is an even positive number, no definitive conclusion can be drawn about the spectral stability of the periodic wave. However, even though our cnoidal periodic wave in $(\ref{Sol2})$ could be spectrally stable according to \cite{bronski}, we can not conclude an orbital stability result in the energy space. In fact,  the model $(\ref{KF2})$ does not have global solutions in time for arbitrary initial data $(u_0,v_0)\in H_{per}^1\times L_{per}^2$ since the corresponding evolution $(u(t),v(t))\in H_{per}^1\times L_{per}^2$ may blow up in finite time (see \cite{levine}).

 Consider the following Klein-Gordon type equation
\begin{equation}u_{tt}-u_{xx}+V'(u)=0.\label{genKG}\end{equation}
Important results concerning spectral/modulational stability of periodic waves have been determined in \cite{jones1} and \cite{jones2} for the case where $V:\mathbb{R}\rightarrow \mathbb{R}$ is a  periodic nonlinearity (both works include the case $V(u)=-\cos(u)$; the well known sine-Gordon equation). In a similar setting of assumptions as in \cite{jones1}, the authors in \cite{MM} gave a simple criteria for the existence of dynamical Hamiltonian-Hopf instabilities, an useful tool to establish the spectral stability of periodic traveling waves. Some additional references of related topics can be listed as \cite{hakkaev1} and \cite{stan}.

\indent Here we are interested in the case where the periodic function $\varphi$ is a sign changing function. Substituting \eqref{PW} into \eqref{KF2}, we have that $\varphi$ satisfies the following ODE
\begin{equation}\label{KF3}
-(1-c^2) \varphi''+\varphi-\varphi^{3}=0.
\end{equation}
For $1-c^2>0$, an explicit solution of \eqref{KF3}, depending on the Jacobi elliptic function of cnoidal type is
\begin{equation}\label{Sol2}
\varphi(x)=\frac{\sqrt{2}k}{\sqrt{2k^2-1}}\;{\rm cn}\left(\frac{4\K(k)}{L}x,k \right),\; \; x \in \mathbb{R},
\end{equation} 
where $k\in\left(\frac{1}{\sqrt{2}},1\right)$ is the modulus of the elliptic function and $\K(k)$ is the complete  elliptic integral of the first kind. The parameter $\omega:=1-c^2$ then depends smoothly on $k\in\left(\frac{1}{\sqrt{2}},1\right)$ and may be written as
\begin{equation}\label{ccn}
\omega =  \frac{1}{16}\frac{L^2}{\K(k)^2 (2k^2-1)}.
\end{equation}
Since $0<\omega<1$, from \eqref{ccn}, we formally obtain $0<L<4\K(k)\sqrt{2k^2-1}$ which implies that the period  may be chosen in the interval $(0,+\infty)$, as $k$ varies in the interval $\left(\frac{1}{\sqrt{2}},1\right)$  (see additional details in Proposition $\ref{cnoidalsol}$). The cnoidal wave in $(\ref{Sol2})$ (which is similar to $(\ref{Sol3})$) is a typical periodic wave with the zero mean property. Recent results concerning existence and stability of periodic waves satisfying the zero mean condition have been determined in \cite{ABS}, \cite{ACN}, \cite{NPU}, \cite{NPU1}, and references therein.

Before proceeding, since \eqref{KF2} is invariant by translations, let us recall the general stability/instability criterion for Hamiltonian systems established in \cite{grillakis1}.  By defining
$G(u,v)=E(u,v)-cF(u,v),$ we see that any solution of \eqref{KF3} satisfies $G'(\varphi,c\varphi')=0$, that is, $(\varphi,c\varphi')$ is a critical point of $G$.
Assume the following three conditions:
\begin{itemize}
	\item[(i)] there exists a smooth curve of solution for \eqref{KF3}, $c \in I:= (-1,1)  \mapsto\varphi_c \in H_{per,m}^2,$ where each $\varphi:=\varphi_c$ has period $L$;
	\item[(ii)] the linearized operator  \begin{equation}\displaystyle\label{opconstrained2}\mathcal{L}_{\Pi}:= G''(\varphi,c\varphi')+\displaystyle \left(
\displaystyle	\begin{array}{ccc}
		\frac{3}{L}\int_{0}^{L} \varphi^2 \cdot \; dx& & 0 \\\\
		0  & & 0
	\end{array}\right)=\displaystyle\mathcal{L}+\displaystyle \left(
\begin{array}{ccc}
\frac{3}{L}\int_{0}^{L} \varphi^2 \cdot \; dx & & 0 \\\\
0  & & 0
\end{array}\right),\end{equation} where $\mathcal{L}$ is given by
	\begin{equation}\label{matrixop313}
		\displaystyle \mathcal{L}=\left(
		\begin{array}{ccc}
			-\partial_x^2-3\varphi^2+1  & &c\partial_x\\\\
			\ \ \ \ -c\partial_x & & 1
		\end{array}\right),
	\end{equation}
	has only one negative eigenvalue which is simple and zero is a simple eigenvalue associated to the eigenfunction $(\varphi',c\varphi'')$;
	\item[(iii)] the function $\mathsf{d}: I \rightarrow \mathbb{R}$ defined by $\mathsf{d}(c)=G(\varphi,c\varphi')$ is non-degenerated, that is, $\mathsf{d}''(c)\neq0$.
\end{itemize}
Then according to \cite{grillakis1},  if $\mathsf{d}''(c)<0$ we have the orbital instability and if $\mathsf{d}''(c)>0$ we have the orbital stability (see Definition \ref{stadef} for the precise definition of stability). Since  $G'(\varphi,c\varphi')=0$, we immediately deduce that $\mathsf{d}'(c)=-F(\varphi,c\varphi')$, which implies, from the chain rule, that
\begin{equation}\label{dsegundaa}
	\begin{split}
	\mathsf{d}''(c)&= -\int_0^{L}(\varphi'(x))^2dx-c\dfrac{d}{dc}\int_0^{L}(\varphi'(x))^2dx\\
	&=-\int_0^{L}(\varphi'(x))^2dx+2(1-\omega)\frac{d}{d\omega}\int_0^{L}(\varphi'(x))^2dx,
	\end{split}
\end{equation} 
where we used that $\omega=1-c^2$.

Next, we shall briefly explain how to obtain (ii) and (iii) in our case. Associated with  $(\ref{matrixop313})$ is the auxiliary operator (see Section \ref{section4}) 
\begin{equation}\label{linoperat}
	\mathcal{L}_1=- \omega \partial_x^2-3\varphi^2+1, 
\end{equation}
which has exactly two negative simple eigenvalues and zero is a simple eigenvalue with eigenfunction $\varphi'$. The spectral properties of the operator $\mathcal{L}_1$ are based on basic facts from the Floquet theory, which can be found in \cite{eas} and \cite{magnus}. As far as we know, these facts do not allow (using, for instance, the min-max characterization of eigenvalues) neither to decide about the simplicity of the zero eigenvalue  nor to calculate the quantity of negative eigenvalues of $\mathcal{L}$. So, at a first glance, we are not able to obtain the stability or the instability of the solution $\varphi$. To overcome this difficulty, instead of $\mathcal{L}_1$ we consider the constrained operator $\mathcal{L}_{{1\Pi}}: D(\mathcal{L}_{{1\Pi}})=H^2_{per} \cap L^2_{per,m}\subset  L^2_{per,m} \rightarrow L^2_{per,m}$ defined by
\begin{equation}\label{opconstrained12}
	\mathcal{L}_{{1\Pi}} \, f:= \mathcal{L}_1 f+\frac{3}{L}\int_0^L \varphi^2 f \; dx.
\end{equation}
 Here, $L^2_{per,m}$ indicates the closed subspace of $L^2_{per}$ constituted by the zero mean  functions. In addition, we observe that $\int_{0}^L\mathcal{L}_{1\Pi}fdx=0$ for all $f\in D(\mathcal{L}_{{1\Pi}})$, which means that $\mathcal{L}_{1\Pi}$ is well-defined in $L_{per,m}^2$. Associated to $\mathcal{L}_{1\Pi}$ is the constrained space $S_1:= [1] \subset \Ker(\mathcal{L}_1)^{\perp}=[\varphi']^{\bot}$ and the number
$
D_1:=(\mathcal{L}_1^{-1} 1,1)_{L^2_{per}}.
$
A Morse index theorem is a useful tool for obtaining spectral properties of linear operators. At this point, we are in a position to apply an index theorem for self-adjoint operators (see \cite[Theorem 5.3.2]{kapitula} and \cite[Theorem 4.1]{pel-book}), which provides  the spectral properties of $\mathcal{L}_{1\Pi}$ in terms of the spectral properties associated with $\mathcal{L}_1$. More precisely, since $\ker(\mathcal{L}_1)=[\varphi']$, we have 
\begin{equation}\label{indexformula12}
	\text{n}(\mathcal{L}_{{1\Pi}})=\text{n}(\mathcal{L}_1)-n_0-z_0
\end{equation}
and
\begin{equation}\label{indexformula123}
	\text{z}(\mathcal{L}_{{1\Pi}})=\text{z}(\mathcal{L}_1)+z_0,
\end{equation}
where $\text{n}(\mathcal{A})$ and $\text{z}(\mathcal{A})$ indicate the number of negative eigenvalues and the dimension of the kernel of the linear operator $\mathcal{A}$. In addition, numbers $n_0$ and $z_0$ are defined respectively as
\begin{equation}\label{n0z0}
n_0:=
\begin{cases}
	1, \: \text{if} \: D_1<0, \\
	0, \: \text{if} \: D_1 \geq 0,\ \
\end{cases}
\quad \text{and} \quad\ 
z_0:=
\begin{cases}
	1, \: \text{if} \: D_1=0, \\
	0, \: \text{if} \: D_1 \neq 0.
\end{cases}
\end{equation}
\indent Let $L>0$ be fixed. In our approach, we consider the wave speed $c$ only over the interval $c\in [0,1)$. The case $-1<c<0$ can be analyzed similarly by symmetry. We shall prove that if $c\in [0,1)$ then there exists a unique $c^{*}>0$ such that for any $c\in [0,c^*)$ we have $D_1<0$. In view of $(\ref{indexformula12})$ and \eqref{indexformula123} we deduce that $\mathcal{L}_{1\Pi}$ has only one negative eigenvalue which is simple and zero is a simple eigenvalue whose eigenfunction is $\varphi'$. These facts imply, by using the index formula in $(\ref{indexformula12})$ for $\mathcal{L}_{\Pi}$ instead of $\mathcal{L}_{1\Pi}$ and $\mathcal{L}$ instead of $\mathcal{L}_1$, that the number of negative eigenvalues of the constrained operator $\mathcal{L}_{\Pi}$ is equal to one and zero is a simple eigenvalue whose eigenfunction is $(\varphi',c\varphi'')$.\\
\indent Also, by using the explicit expression of the cnoidal waves in $(\ref{Sol2})$ and some algebraic computations we show that $\mathsf{d}''(c)<0$, $c\in [0,c^*)$. Consequently, we may apply the abstract theory in \cite{grillakis1} as described above to conclude the orbital instability of the cnoidal waves for $c$ in a smaller subset, namely for $c\in [0,c(k_1))\subset [0,c^{\ast})$, where $k_1\approx 0.802$.

Next, we turn attention to the NLS equation \eqref{KF1}. As with the KG equation, the NLS equation can also be viewed as a (real) Hamiltonian system. Indeed, by setting   $u = P + iQ$, where $P$ and $Q$ are, respectively, the real and imaginary parts of $u$  and writing $U =(P,Q)$, we see that (\ref{KF1}) reads as
\begin{equation}\label{hamiltonian-schrodinger}
	\dfrac{d}{dt} U(t) = J\mathcal{E}'(U(t)), 
\end{equation}
where $J$ is as in \eqref{J} and $\mathcal{E}'$ represents the Fr\'echet derivative of the conserved quantity
\begin{equation}\label{quantEF1}
	\mathcal{E}(U):=\mathcal{E}(u)= \frac{1}{2}\int_0^L\left[ |u_x|^2-\frac{1}{2}|u|^4\right]\; dx.
\end{equation}
It is well-known that (\ref{KF1}) also conserves the quantity 
\begin{equation}\label{quantEF2}
	\mathcal{F}(u)=\frac{1}{2}\int_0^L |u|^2\; dx.
\end{equation}

Here we look for the existence of standing waves  of the form
\begin{equation}\label{PW2}
	u(x,t)=e^{i\omega t} \varphi_{\omega}(x), 
\end{equation}
where $\omega \in \mathbb{R}$ and $\varphi:=\varphi_\omega : \mathbb{R} \rightarrow \mathbb{R}$ is a smooth and $L$-periodic function.  For the NLS equation, by using the ideas introduced  in \cite{bona}  and \cite{weinstein}, the author in \cite{angulo} established stability properties of periodic standing waves solutions with \textit{dnoidal} profile with respect to perturbations with the same period $L$ (see also \cite{gallay1} and \cite{lecoz}). Existence of smooth branches of solutions with \textit{cnoidal} profiles (see \eqref{Sol3}) were also reported in \cite{angulo}; however, the author was not able to obtain the  orbital stability/instability in the energy space $ H^1_ {per}$ for these waves. By using the techniques introduced in \cite{grillakis1} and \cite{grillakis2}, the cnoidal waves were shown to be orbitally stable  in \cite{gallay1} and \cite{gallay2} with respect to anti-periodic perturbations. Spectral stability with respect to bounded or localized perturbations were also reported in \cite{gallay1}. For $\omega$ in some interval $ (0,\omega^*)$ (see Proposition \ref{propn1} below), the authors in \cite{lecoz} have established spectral stability results for the cnoidal waves with respect to perturbations with the same period  $L$ and orbital stability results in the space  constituted by anti-periodic functions with period $L/2$. Their proofs relies on first  proving that the cnoidal waves satisfy a convenient minimization problem with constraints, which yields the orbital stability. The spectral stability follows by relating the coercivity of the linearized action  with the number of eigenvalues with negative Krein signature of $J\mathcal{L}$ (see \eqref{matrixop2}).

 The integrability of the equation \eqref{KF1} was used by the authors in \cite{deconinckupsal} to determine spectral stability results  of periodic waves (including dnoidal and cnoidal waves) with respect to subharmonic perturbations (e.g. perturbation of integer multiple $n$ times the minimal period of the solution). The spectral stability is then used to conclude the orbital stability for cnoidal waves as in $(\ref{Sol3})$. However, the authors employed the arguments in \cite{grillakis1} to conclude the orbital stability by considering the orbit generated only by one symmetry of equation \eqref{KF1}. A complementary result with similar ideas as in \cite{deconinckupsal} was established in \cite{BND} for the equation $(\ref{KF1})$ with defocusing nonlinearity.\\
\indent To the best of our knowledge, the orbital stability (with the orbit contemplating the two basic symmetries, namely, rotation and translation) of the cnoidal waves in the energy space with respect to perturbations with the same period of the underlying wave remains as an open problem. Since the cnoidal waves has zero mean on its fundamental period,  here we advance in this question by showing the orbital stability with respect to \textit{perturbations with the same period and with the zero mean property}.

	Before describing our results we point out that strong solutions (that is, those ones in $H_{per}^2$) of the NLS equation do not preserve their mean as in the case of the Korteweg-de Vries equation (see \cite{ABS}). In particular, if the initial data has zero mean then not necessarily the strong solutions of \eqref{KF2} or \eqref{KF1} have zero mean. However, as we will see below, we can always prove the existence of global \textit{weak solutions} in the space of zero mean functions. Hence, we emphasize that along the paper all solutions of the Cauchy problem associated with \eqref{KF2} or \eqref{KF1} must be understood in the weak sense. As far as we can see, this fact is consistent with the abstract theory of Grillakis, Shatah, and Strauss where only the existence of weak solutions is assumed (see \cite[page 165]{grillakis1}).

In order to precisely describe our result, observe that replacing \eqref{PW2} into \eqref{KF1}, we obtain the ODE
\begin{equation}\label{KF4}
	- \varphi''+\omega \varphi-\varphi^{3}=0,
\end{equation}
which is quite similar to \eqref{KF3}. For $\omega>0$, we can find explicit solutions depending on the Jacobi elliptic function of cnoidal type as
\begin{equation}\label{Sol3}
	\varphi(x)=\frac{\sqrt{2\omega}k}{\sqrt{2k^2-1}}\;{\rm cn}\left(\frac{4\K(k)}{L}x,k \right),
\end{equation} 
where $k \in \left(\frac{1}{\sqrt{2}}, 1\right)$. The frequency of the wave  is explicitly given  by
\begin{equation}\label{omegacn2}
	\omega = \frac{16 \K(k)^2 (2k^2-1)}{L^2}.
\end{equation}
By $(\ref{omegacn2})$ we can choose a fixed period $L\in (0,+\infty)$ in order to construct smooth periodic waves depending on $\omega>0$ (see Proposition $\ref{cnoidalsol2}$). 

Our approach to obtain the stability result will be based on the construction of a suitable Lyapunov function combined with the convexity of the function  $\mathsf{d}(\omega)=\mathcal{E}(\Phi)+\omega\mathcal{F}(\Phi)$, where $\Phi=(\varphi,0)$. It is not difficult to see that $\mathsf{d}''(\omega)>0$ (see \cite[page 23]{angulo}). In order to construct the Lyapunov function we need to study the spectral properties of the linearized operator
\begin{equation}\label{matrixop2}
	\mathcal{L} :=\mathcal{G}''(\Phi)= \displaystyle \left(
	\begin{array}{ccc}
	\mathcal{L}_2  & &  0 \\\\
	 0 & & \mathcal{L}_3
	\end{array}\right),
\end{equation}
where $\mathcal{G}(u)=\mathcal{E}(u)+\omega\mathcal{F}(u)$ and 
\begin{equation}\label{L2L3}
	\mathcal{L}_2: = -\partial_x^2+\omega-3\varphi^2 \quad \text{and} \quad \mathcal{L}_3:=  -\partial_x^2+\omega-\varphi^2.
\end{equation}
The operators $\mathcal{L}_2$ and $\mathcal{L}_3$ are such that  $\text{n}(\mathcal{L}_2)=2$, $\text{n}(\mathcal{L}_3)=1$ and $\text{z}(\mathcal{L}_2)=\text{z}(\mathcal{L}_3)=1$; in addition $\ker(\mathcal{L}_2)=[\varphi']$ and $\ker(\mathcal{L}_3)=[\varphi]$ (see \cite[Theorems 3.2 and 3.4]{angulo}). Since $\mathcal{L}$ is a diagonal operator, this allow us to conclude that $\text{n}(\mathcal{L})=3$ and $\ker(\mathcal{L})=[(\varphi',0),(0,\varphi)]$. At this point, it must be highlighted that, as we already commented above, the abstract theories for Hamiltonian systems developed in \cite{grillakis1} and \cite{grillakis2} do not give a positive answer concerning the stability of the cnoidal waves with respect to perturbations with the same period because 
$
	\text{n}(\mathcal{L})-p(\mathsf{d}'')=3-1=2
$
is an even number. Recall that here, $p(\mathcal{A})$ indicates the number of positive eigenvalues of the  linear operator $\mathcal{A}$. One way to overcome this difficulty is to restrict attention to the subspace $L^2_{per,m}$. Indeed, let us consider the constrained operators
\begin{equation}\label{opconstrained123}
	\mathcal{L}_{{2\Pi}} \, f:= \mathcal{L}_2 f+\frac{3}{L}\int_0^L \varphi^2 f \; dx
\end{equation}
and
\begin{equation}\label{opconstrained1234}
	\mathcal{L}_{{3\Pi}} \, f:= \mathcal{L}_3 f+\frac{1}{L}\int_0^L \varphi^2 f \; dx.
\end{equation}
We see that $\mathcal{L}_{{i\Pi}}: D(\mathcal{L}_{{i\Pi}})=H^2_{per} \cap L^2_{per,m}\subset  L^2_{per,m} \rightarrow L^2_{per,m}$ are well defined for $i=2,3$. To determine the spectral properties of  $\mathcal{L}_{{i\Pi}}$ we also use the Index Theorem. 
The index formulas as in $(\ref{indexformula12})$ and $(\ref{indexformula123})$ are now given by
\begin{equation}\label{indexformula1234}
	\text{n}(\mathcal{L}_{{i\Pi}})=\text{n}(\mathcal{L}_i)-n_0-z_0 \qquad \mbox{and} \qquad 	\text{z}(\mathcal{L}_{{i\Pi}})=\text{z}(\mathcal{L}_i)+z_0,
\end{equation}
where for
$
D_i:=(\mathcal{L}_i^{-1} 1,1)_{L^2_{per}}, i=2,3,
$
 the values of $n_0$ and $z_0$ are defined respectively as
\begin{equation*}
	n_0:=
	\begin{cases}
		1, \: \text{if} \: D_i<0, \\
		0, \: \text{if} \: D_i \geq 0,\ \
	\end{cases}
	\quad \text{and} \quad\ 
	z_0:=
	\begin{cases}
		1, \: \text{if} \: D_i=0, \\
		0, \: \text{if} \: D_i \neq 0.
	\end{cases}
\end{equation*}
For the cnoidal waves in $(\ref{Sol3})$, there exists a unique $\omega^*>0$ such that $D_2<0$ for all $\omega\in(0,\omega^*)$. From $(\ref{indexformula1234})$, this gives $n_0=1$ and $z_0=0$, so that $\text{n}(\mathcal{L}_{2\Pi})=1$ and $\text{z}(\mathcal{L}_{2\Pi})=1$. In addition,  we have $D_3<0$, for all $\omega>0$, which gives $\text{n}(\mathcal{L}_{3\Pi})=0$ and $\text{n}(\mathcal{L}_{3\Pi})=1$. Therefore, the full constrained operator $\mathcal{L}_{\Pi}$ associated to $\mathcal{L}$ satisfies $\text{n}(\mathcal{L}_{\Pi})=1$ and $z(\mathcal{L}_{\Pi})=2$, for all $\omega\in (0,\omega^*)$. Having in mind all mentioned restrictions on the zero mean space $L_{per,m}^2$, we obtain that the periodic wave $\varphi$ in $(\ref{Sol3})$ is orbitally stable in the  space $H^1_{per}\cap L_{per,m}^2$ by using an adaptation of the arguments in \cite{NP2015} and \cite{stuart}.

Our paper is organized as follows: in Section \ref{section2} we present some basic notation. In Section \ref{section3}, we show the existence of a family of periodic wave solutions of the cnoidal type for the equations \eqref{KF2} and (\ref{KF1}). Spectral analysis for the linearized operators $\mathcal{L}$ is established in Section \ref{section4}. Finally, the orbital instability/stability of the periodic waves will be shown in Sections \ref{section5} and \ref{section6}.

\section{Notation}\label{section2}

Here we introduce the basic notation concerning the periodic Sobolev spaces. For a more complete introduction to these spaces we refer the reader to \cite{Iorio}. By $L^2_{per}:=L^2_{per}([0,L])$, $L>0$, we denote the space of all square integrable functions which are $L$-periodic. For $s\geq0$, the Sobolev space
$H^s_{per}:=H^s_{per}([0,L])$
is the set of all periodic distributions such that
$$
\|f\|^2_{H^s_{per}}:= L \sum_{k=-\infty}^{\infty}(1+|k|^2)^s|\hat{f}(k)|^2 <\infty,
$$
where $\hat{f}$ is the periodic Fourier transform of $f$. The space $H^s_{per}$ is a  Hilbert space with natural inner product denoted by $(\cdot, \cdot)_{H^s_{per}}$. When $s=0$, the space $H^s_{per}$ is isometrically isomorphic to the space  $L^2_{per}$, that is, $L^2_{per}=H^0_{per}$ (see, e.g., \cite{Iorio}). The norm and inner product in $L^2_{per}$ will be denoted by $\|\cdot \|_{L^2_{per}}$ and $(\cdot, \cdot)_{L^2_{per}}$. In our paper, we do not distinguish if the elements in $H_{per}^s$ are complex- or real-valued. 

For $s \geq 0$, we define
\begin{equation}\label{zeromeandefinition}
H^s_{per,m} := \left\{ f \in H^s_{per}\; ; \; \frac{1}{L}\int_0^L  f(x) \; dx =0 \right\},
\end{equation}
endowed with  norm and inner product of $H_{per}^s$. Denote the topological dual of $H^s_{per,m}$ by
$
H^{-s}_{per,m}:=(H^s_{per,m})' . 
$
In addition, to simplify notation we set
 $$\mathbb{H}^s_{per}:= H^s_{per} \times H^s_{per},\ \
 \mathbb{H}^s_{per,m}:= H^s_{per,m} \times H^s_{per,m},\ \
 \mathbb{L}^2_{per}:= L^2_{per} \times L^2_{per},$$
endowed with their usual norms and scalar products. When necessary and since $\mathbb{C}$ can be identified with $\mathbb{R}^2$, notations above can also be used in the complex/vectorial case in the following sense: for $f\in \mathbb{H}_{per}^s$ we have $f=f_1+if_2\equiv (f_1,f_2)$, where $f_i\in H_{per}^s$ $i=1,2$.\\
\indent Throughout this paper, we  fix the following embedding chains with the Hilbert space $L_{per}^2$ identified with its dual (by the Riesz Theorem) as $$H_{per,m}^1\hookrightarrow H_{per}^1\hookrightarrow L_{per}^2\equiv (L_{per}^2)'\hookrightarrow H_{per}^{-1}\hookrightarrow H_{per,m}^{-1},$$
where $H_{per,m}^{-1}$ stands for the topological dual of $H_{per,m}^1$.

\indent The symbols $\sn(\cdot, k), \dn(\cdot, k)$ and $\cn(\cdot, k)$ represent the Jacobi elliptic functions of \textit{snoidal}, \textit{dnoidal}, and \textit{cnoidal} type, respectively. For $k \in (0, 1)$, $\K(k)$ and $\E(k)$ will denote the complete elliptic integrals of the first and second kind, respectively. For the precise definition and additional properties of the elliptic functions we refer the reader to  \cite{byrd}.

\section{Existence of Periodic Waves of Cnoidal Type}\label{section3}

This section is devoted to prove the existence of cnoidal-type solutions for equations \eqref{KF3} and \eqref{KF4}. Since such results are well-know for similar equations, we only bring  the main steps.

\subsection{Klein-Gordon equation}
Our purpose in this section is to present the existence of
periodic solutions $\varphi_c:=\varphi$ for the nonlinear ODE
\begin{equation}\label{ode1}
-\omega \varphi''+\varphi-\varphi^{3}=0,
\end{equation}
where $\omega:=1-c^2$ and $c \in (-1,1)$. Indeed, by multiplying $(\ref{ode1})$ by $\varphi'$ we see that it reduces to the  quadrature form
\begin{equation}\label{1.EC}
(\varphi')^2=\frac{1}{2 \omega} \left( 2 \varphi^2-\varphi^4+4A \right)=-\frac{1}{2 \omega}F(\varphi),
\end{equation}
where $A \in \mathbb{R}$ is an integration constant and
$
	F(\lambda):= \lambda^4-2\lambda^2-4A.
$
Since $F$ is an even  polynomial, we assume that it has two real roots of the form $\pm \beta_2$ and two purely imaginary ones of the form $\pm i\beta_1$. In this case, we may write \begin{equation}\label{polF2}
	F(\lambda)=(\lambda^2+\beta_1^2)(\lambda^2-\beta_2^2).
\end{equation}
Without loss of generality, we assume  that $\beta_2>0$. In view of \eqref{1.EC} it follows that
$\varphi (x) \in [-\beta_2, \beta_2]$ for all $x\in \mathbb{R}.$ Thus, periodic sign changing solutions with the zero mean property makes sense in this context.

Using  $(\ref{1.EC})$ and $(\ref{polF2})$ we have that $\beta_1$, $\beta_2$ and  $A$ satisfy the relations 
\begin{equation*}
\beta_2^2- \beta_1^2 = 2 \quad
\mbox{ and } \quad \beta_1^2 \beta^2_2= 4A.
\end{equation*}
 We can establish the following existence result of periodic waves for $(\ref{ode1})$.

\begin{proposition}[Smooth Curve of Cnoidal Waves]\label{cnoidalsol}
Let $L>0$ be fixed. For each $c \in (-1,1)$ there exists a unique $\beta_2 \in (\sqrt{2}, \infty)$ such that the cnoidal wave
\begin{equation}\label{cnoidal}
\varphi_c(\xi)= \beta_2 \, \cn(b\,\xi;k),
\end{equation}
with
$$k^2= \frac{\beta_2^2}{2\beta_2^2-2} \quad \text{and} \quad b^2= \frac{\beta^2_2-1}{\omega}=\frac{\beta^2_2-1}{1-c^2},$$
is an $L$-periodic solution of \eqref{ode1}. In addition, the curve $c\in(-1,1)\longmapsto\varphi_c\in H^2_{per}$ is smooth.
\end{proposition}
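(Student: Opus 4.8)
The plan is to proceed in three stages: verify the ansatz, solve the period constraint, and establish smoothness. First I would substitute $\varphi = \beta_2\,\cn(b\,\xi;k)$ into \eqref{ode1}. Recalling that $y(u)=\cn(u;k)$ satisfies $y''=(2k^2-1)y-2k^2y^3$, the substitution turns the left-hand side of \eqref{ode1} into a linear combination of $\cn$ and $\cn^3$; setting the two coefficients to zero yields the algebraic relations $\omega b^2(2k^2-1)=1$ and $2\omega b^2k^2=\beta_2^2$. Solving these gives exactly $k^2=\frac{\beta_2^2}{2\beta_2^2-2}$ and $b^2=\frac{\beta_2^2-1}{\omega}$, so the ansatz solves \eqref{ode1} for every $\beta_2>\sqrt{2}$, the restriction $\beta_2^2>2$ being precisely what forces $k^2\in(1/2,1)$. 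This step is a routine computation.

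Second, I would impose the period. Since $\cn(\cdot;k)$ has fundamental period $4\K(k)$, the function $\varphi$ has period $4\K(k)/b$, and requiring this to equal $L$ gives $b^2=16\K(k)^2/L^2$. Combined with $b^2=(\beta_2^2-1)/\omega$ and the identity $\beta_2^2-1=1/(2k^2-1)$, this is equivalent to the single scalar equation
\begin{equation}
\omega=\frac{L^2}{16\,\K(k)^2(2k^2-1)}=:\Omega(k),\qquad k\in\left(\tfrac{1}{\sqrt{2}},1\right),
\end{equation}
which recovers \eqref{ccn}. Because $\beta_2^2=2k^2/(2k^2-1)$ is a strictly decreasing bijection from $(1/\sqrt{2},1)$ onto $(\sqrt{2},\infty)$, the existence and uniqueness of $\beta_2$ is equivalent to showing that for each fixed $\omega=1-c^2\in(0,1]$ the equation $\Omega(k)=\omega$ has a unique root $k$.

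The main obstacle is the monotonicity of $\Omega$, or equivalently of $g(k):=\K(k)^2(2k^2-1)$. I would show that $g$ increases strictly from $g(1/\sqrt{2}^+)=0$ to $g(1^-)=+\infty$. Using $\frac{d\K}{dk}=\frac{\E-(1-k^2)\K}{k(1-k^2)}$ and simplifying the resulting expression, one finds
\begin{equation}
g'(k)=\frac{2\K(k)}{k(1-k^2)}\Big[(2k^2-1)\E(k)+(1-k^2)\K(k)\Big].
\end{equation}
On $(1/\sqrt{2},1)$ every factor is positive (here the condition $2k^2-1>0$ is exactly where $k>1/\sqrt{2}$ enters), so $g'>0$; together with the boundary limits this makes $\Omega=L^2/(16g)$ a strictly decreasing bijection onto $(0,\infty)$. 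Hence for each $\omega\in(0,1]$ there is a unique $k$, and therefore a unique $\beta_2\in(\sqrt{2},\infty)$.

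Finally, the smoothness of $c\mapsto\varphi_c$ follows from the implicit function theorem. Since $g$ is smooth and $g'(k)\neq0$, the relation $\Omega(k)=1-c^2$ defines $k$ as a smooth function of $c$, whence $\beta_2$ and $b$ are smooth functions of $c$ as well. As $\cn$ is real-analytic in its argument and in the modulus, and the period $L$ is held fixed so that all $\varphi_c$ belong to the same space $H^2_{per}([0,L])$, the map $c\in(-1,1)\mapsto\varphi_c=\beta_2(c)\,\cn(b(c)\,\cdot\,;k(c))\in H^2_{per}$ is smooth, which completes the proof.
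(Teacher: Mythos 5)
Your proof is correct, and it follows the same route the paper indicates: the paper omits the argument, saying only that it rests on the Implicit Function Theorem as in Angulo's Theorem 2.3, which is exactly the scheme you carry out (ansatz verification, reduction to the scalar period equation $\Omega(k)=1-c^2$, strict monotonicity of $g(k)=\K(k)^2(2k^2-1)$ via the derivative identity for $\K$, then IFT for smoothness of the branch). All your algebraic identities check out, so your write-up is a complete and accurate filling-in of the details the paper leaves to the reference.
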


\begin{proof}
 The proof is based on the Implicit Function Theorem and it is similar to that in \cite[Theorem 2.3]{angulo}; so we omit the details.
\end{proof}

\begin{remark}\label{remarkcnoidalsol}
Let $L>0$ be fixed. For $c \in (-1,1)$, solution in \eqref{cnoidal} can be written  in terms of the modulus $k\in\left(\frac{1}{\sqrt{2}},1\right)$ as in \eqref{Sol3},
that is, $\beta_2$ and $b$ are given by
$$\beta_2= \frac{\sqrt{2}k}{\sqrt{2k^2-1}}\ \ 
\mbox{and}\ \ 	b= \frac{4\K(k)}{L}.
$$
Moreover, the wave speed $c$ is given by $\omega=1-c^2$, with $\omega$ as in \eqref{ccn}.
\end{remark}

\subsection{Schr\"odinger equation}
Our goal here is to find explicit solutions for the ODE
\begin{equation}\label{ode2}
- \varphi''+\omega \varphi-\varphi^{3}=0.
\end{equation}
In a similar fashion as Proposition \ref{cnoidalsol}, we can infer the following result.

\begin{proposition}[Smooth Curve of Cnoidal Waves]\label{cnoidalsol2}
Let $L>0$ be fixed. For each $\omega >0$ there exists a unique $\beta_3 \in (\sqrt{2\omega}, \infty)$ such that the cnoidal wave
\begin{equation}\label{cnoidal2}
\varphi_{\omega}(x)= \beta_3 \, \cn(b_1\, x;k),
\end{equation}
with
$$k^2= \frac{\beta_3^2}{2\beta_3^2-2\omega} \quad \text{and} \quad b^2_1= \beta^2_3- \omega,$$
is an $L$-periodic solution of \eqref{ode2}. In addition, the curve $\omega\in(0,+\infty)\longmapsto\varphi_\omega\in H^2_{per}$ is smooth.
\end{proposition}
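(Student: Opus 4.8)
The plan is to follow the classical quadrature-plus-implicit-function-theorem scheme of \cite[Theorem 2.3]{angulo}, mirroring exactly the argument behind Proposition \ref{cnoidalsol}. First I would reduce \eqref{ode2} to a first-order quadrature: multiplying by $\varphi'$ and integrating once yields
\begin{equation*}
(\varphi')^2 = \omega\varphi^2 - \tfrac{1}{2}\varphi^4 + 2A = -\tfrac{1}{2}F(\varphi), \qquad F(\lambda) := \lambda^4 - 2\omega\lambda^2 - 4A,
\end{equation*}
for an integration constant $A$. Seeking a sign-changing, zero-mean profile, I assume $F$ has two real roots $\pm\beta_3$ and two purely imaginary roots $\pm i\beta_1$, so that $F(\lambda) = (\lambda^2+\beta_1^2)(\lambda^2-\beta_3^2)$; comparing coefficients gives the constraints $\beta_3^2 - \beta_1^2 = 2\omega$ and $\beta_1^2\beta_3^2 = 4A$. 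Then $(\varphi')^2 = \tfrac{1}{2}(\beta_3^2 - \varphi^2)(\varphi^2 + \beta_1^2)$, which confines $\varphi$ to $[-\beta_3,\beta_3]$.

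Next I would identify the explicit solution. Substituting the ansatz $\varphi(x) = \beta_3\,\cn(b_1 x; k)$ and using $\tfrac{d}{du}\cn(u;k) = -\sn(u;k)\dn(u;k)$ together with $\sn^2 = 1-\cn^2$ and $\dn^2 = 1 - k^2 + k^2\cn^2$, the quadrature reduces to matching the coefficients of $\varphi^2$ and the constant term. This forces
\begin{equation*}
k^2 = \frac{\beta_3^2}{\beta_1^2 + \beta_3^2}, \qquad b_1^2 = \frac{\beta_1^2 + \beta_3^2}{2},
\end{equation*}
and substituting $\beta_1^2 = \beta_3^2 - 2\omega$ yields precisely the stated formulas $k^2 = \beta_3^2/(2\beta_3^2 - 2\omega)$ and $b_1^2 = \beta_3^2 - \omega$. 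The requirement $b_1^2 > 0$ is equivalent to $2k^2 - 1 > 0$, which is exactly the admissible range $k \in (1/\sqrt 2, 1)$; in this range one also checks $\beta_3^2 = 2\omega k^2/(2k^2-1) > 2\omega$, consistent with $\beta_3 \in (\sqrt{2\omega}, \infty)$.

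The heart of the proof is fixing the period. Since $\cn(\cdot;k)$ has fundamental period $4\K(k)$, the profile $\varphi$ has period $T = 4\K(k)/b_1$, and imposing $T = L$ together with the identity $b_1^2 = \omega/(2k^2-1)$ (obtained by combining the two relations above) gives the single constraint
\begin{equation}\label{plan-omega}
\omega = \frac{16\,\K(k)^2(2k^2-1)}{L^2},
\end{equation}
which is \eqref{omegacn2}. For fixed $L$ I would regard the right-hand side as a map $\omega = \omega(k)$ on $(1/\sqrt 2, 1)$ and show it is a smooth strictly increasing bijection onto $(0, \infty)$. The boundary behavior is transparent: $\K$ stays finite while $2k^2-1\to 0^+$ as $k\to (1/\sqrt2)^+$, so $\omega(k)\to 0^+$, whereas $\K(k)\to\infty$ as $k\to 1^-$, so $\omega(k)\to\infty$. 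Monotonicity follows by differentiating, since $\omega'(k) = \tfrac{16}{L^2}\K(k)\bigl[2\K'(k)(2k^2-1) + 4k\K(k)\bigr]$ and every factor is strictly positive on the admissible range (using $\K'(k)>0$ on $(0,1)$ and $2k^2-1>0$). Hence $\omega(k)$ has a smooth inverse $k = k(\omega)$, which determines $\beta_3 = \beta_3(\omega) = \sqrt{2\omega}\,k/\sqrt{2k^2-1}$ uniquely and smoothly; equivalently, the Implicit Function Theorem applied to $G(\beta_3,\omega) := T - L$ with nonvanishing $\partial_{\beta_3} G$ yields the same conclusion.

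Finally, smoothness of $\omega\mapsto\varphi_\omega$ into $H^2_{per}$ follows because $k(\omega)$, $b_1(\omega)$, $\beta_3(\omega)$ are smooth and $\cn(\cdot;k)$ depends analytically on its modulus, with all spatial derivatives uniformly controlled on the fixed period; thus $\varphi_\omega$ and $\varphi_\omega''$ vary smoothly in $L^2_{per}$. I expect the only genuine obstacle to be the strict monotonicity of the period map \eqref{plan-omega}: here it is painless because $2k^2-1$ keeps a definite sign on $(1/\sqrt 2,1)$, yet it is the step that simultaneously secures existence, uniqueness of $\beta_3$, and applicability of the Implicit Function Theorem, so I would verify it carefully and otherwise defer the routine computations to \cite[Theorem 2.3]{angulo} and Proposition \ref{cnoidalsol}.
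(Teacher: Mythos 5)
Your proposal is correct and follows essentially the same route as the paper, which omits the details by deferring to the Implicit Function Theorem argument of \cite[Theorem 2.3]{angulo} (exactly the quadrature, cnoidal ansatz, and period-map analysis you reconstruct). All your computations check out: the factorization constraints $\beta_3^2-\beta_1^2=2\omega$, $\beta_1^2\beta_3^2=4A$, the identities $k^2=\beta_3^2/(2\beta_3^2-2\omega)$, $b_1^2=\beta_3^2-\omega=\omega/(2k^2-1)$, and the strict monotonicity of $k\mapsto 16\K(k)^2(2k^2-1)/L^2$ onto $(0,\infty)$ are precisely what secure existence, uniqueness of $\beta_3$, and smoothness of the curve $\omega\mapsto\varphi_\omega$.
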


\begin{remark}\label{remarkcnoidalsol1}
It can also be viewed that solution in \eqref{cnoidal2} may be written as  in \eqref{Sol3},
	that is, $\beta_3$ and $b_1$  are given by
	
	$$\beta_3= \frac{\sqrt{2\omega}k}{\sqrt{2k^2-1}},\ \ 
 	b_1= \frac{4\K(k)}{L},
	$$
and the frequency $\omega$ is given by	\eqref{omegacn2}.
\end{remark}

\section{Spectral Analysis.}\label{section4}


\subsection{Spectral analysis for the Klein-Gordon equation}\label{spectralKG}  Throughout this subsection, we fix $L>0$ and, for $c \in (-1,1)$, we let $\varphi=\varphi_c$ be the periodic solution of $(\ref{ode1})$ given by Proposition \ref{cnoidalsol}. The aim of this section is to study the non-positive spectrum of the linearized operator $\mathcal{L}$ defined in \eqref{matrixop313}.  As we already said, the spectral information of $\mathcal{L}$ is closely related to that of $\mathcal{L}_1$, defined in \eqref{linoperat}. Indeed,  the quadratic form associated to $\mathcal{L}$ reads as
\begin{equation*}
\begin{split}
		Q(u,v)&=\displaystyle\left(\mathcal{L}(u,v),(u,v)\right)_{\mathbb{L}^2_{per}} \\
		&=\displaystyle\int_{0}^{L} \left(\omega u'^2+u^2-3\varphi^2u^2\right)  dx+\big\|cu'-v\big\|_{L^2_{per}}^2\\
		&=Q_1 (u)+\big\|c u'-v\big\|_{L^2_{per}}^2,
\end{split}
\end{equation*}
where, for $\omega=1-c^2>0$, 
\begin{equation*}
	Q_1 (u) :=\int_{0}^{L} \left(\omega u'^2+u^2-3\varphi^2u^2 \right) dx
\end{equation*}
represents the quadratic form associated to $\mathcal{L}_1$. So the idea is  to first count the non-positive spectrum of $\mathcal{L}_1$. After that, we proceed in counting the negative eigenvalues of $\mathcal{L}$ and prove that $\ker(\mathcal{L})=[(\varphi',c\varphi'')]$.

To start with, from $(\ref{ode1})$ we promptly see that $\varphi' \in \Ker(\mathcal{L}_1)$. In addition, since $\varphi'$ has exactly two zeros in the half-open interval $[0,L )$ we see that zero is the second or the third eigenvalue of  $\mathcal{L}_1$ (see, for instance, \cite[Theorem 3.1.2]{eas}). The next result gives that zero is indeed the third one.

\begin{lemma}\label{2eigenvalues}
The operator $\mathcal{L}_1$ in \eqref{linoperat} has exactly two negatives eigenvalues which are simple. Zero is a simple eigenvalue with associated eigenfunction $\varphi'$. In addition, the rest of the
spectrum is constituted by a discrete set of eigenvalues.
\end{lemma}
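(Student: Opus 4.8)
The plan is to treat $\mathcal{L}_1=-\omega\partial_x^2+1-3\varphi^2$ as a Hill (periodic Schr\"odinger) operator and to locate its first eigenvalues by combining the classical Floquet/oscillation theory with an explicit reduction to a Lam\'e equation whose low-lying spectrum is known in closed form. Since $\mathcal{L}_1$ acts on $L^2_{per}([0,L])$ with a smooth, bounded potential, it has compact resolvent, so its spectrum is automatically a discrete sequence of eigenvalues accumulating only at $+\infty$; this already yields the last assertion of the lemma. As recorded before the statement, differentiating \eqref{ode1} gives $\varphi'\in\Ker(\mathcal{L}_1)$, and $\varphi'$ has exactly two zeros in $[0,L)$, so by \cite[Theorem 3.1.2]{eas} the eigenvalue $0$ is either the second or the third eigenvalue of $\mathcal{L}_1$. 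Equivalently, $\mathcal{L}_1$ has one or two negative eigenvalues, and the whole difficulty is to decide between these alternatives and to confirm that $0$ is simple.

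To resolve this I would change variables. Setting $y=bx$ and using $\cn^2=1-\sn^2$ together with the identities $\omega b^2=\beta_2^2-1$ and $k^2=\beta_2^2/(2(\beta_2^2-1))$ from Proposition \ref{cnoidalsol}, the eigenvalue problem $\mathcal{L}_1\chi=\lambda\chi$ becomes, after dividing by $\beta_2^2-1$,
\[
-\chi''+\left(\frac{1-3\beta_2^2}{\beta_2^2-1}+6k^2\sn^2(y,k)\right)\chi=\frac{\lambda}{\beta_2^2-1}\,\chi.
\]
The coefficient $6k^2=n(n+1)k^2$ with $n=2$ identifies this as the Jacobi form of the Lam\'e equation with $n=2$, whose five band edges and Lam\'e-polynomial eigenfunctions are explicitly known.

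Reading off the affine correspondence $\lambda=(\beta_2^2-1)h+1-3\beta_2^2$ between the eigenvalues $h$ of the Lam\'e operator $-\partial_y^2+6k^2\sn^2$ and the eigenvalues $\lambda$ of $\mathcal{L}_1$, I would verify that the eigenfunction $\sn\,\dn\propto\varphi'$ corresponds to $h=1+4k^2$ and $\lambda=0$; that the even eigenfunction $\cn\,\dn$ corresponds to $h=1+k^2$ and $\lambda=-\tfrac32\beta_2^2<0$; and that the nodeless ground state $\sn^2-c$ corresponds to the smallest band edge, hence to some $\lambda_0<-\tfrac32\beta_2^2$. Ordering all five band edges for $k\in(\tfrac{1}{\sqrt2},1)$ shows that the next one (eigenfunction $\sn\,\cn$) lies strictly above the edge carrying $0$, giving a positive eigenvalue. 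Putting this together, $\mathcal{L}_1$ has exactly the two simple negative eigenvalues $\lambda_0<\lambda_1=-\tfrac32\beta_2^2$ below $\lambda_2=0$, while $0$ sits at an isolated band edge with $\lambda_1<0<\lambda_3$, so it is simple with eigenfunction $\varphi'$.

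The obstacle I anticipate is bookkeeping rather than conceptual: one must check that among the five explicit Lam\'e solutions exactly the three listed are the lowest admissible ones for the period-$L$ problem, equivalently for the $4\K(k)$-periodic problem in $y$, which admits both the $2\K$-periodic eigenfunctions $\sn^2-c$ and $\sn\,\cn$ and the $2\K$-antiperiodic ones $\cn\,\dn$ and $\sn\,\dn$; that their eigenvalues are correctly ordered throughout the whole range $k\in(\tfrac{1}{\sqrt2},1)$; and that the gap $(\lambda_1,\lambda_2)$ is genuinely open, which is what guarantees the simplicity of the eigenvalue $0$.
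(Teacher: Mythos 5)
Your proposal is correct and takes essentially the same route as the paper: the paper proves this lemma simply by citing \cite[Theorem 3.2]{angulo}, and the argument there is exactly your reduction to the $n=2$ Lam\'e operator $-\partial_y^2+6k^2\sn^2(y,k)$ with its five explicit band edges, the affine spectral map $\lambda=(\beta_2^2-1)h+1-3\beta_2^2$, and the identification of $\sn\dn\propto\varphi'$ with $\lambda=0$. Indeed the paper itself re-uses the resulting explicit eigenpairs (its $\lambda_0,f_0$ and $\lambda_4,f_4$, which are your $\sn^2-c_\pm$ Lam\'e polynomials transported by that affine map) in the proof of Lemma \ref{leman1}, confirming your bookkeeping of the ordering $h_0<1+k^2<1+4k^2<4+k^2\leq h_4$.
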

\begin{proof}
See \cite[Theorem 3.2]{angulo}.
\end{proof}

\begin{remark}\label{1eigenvalue}
From Lemma \ref{2eigenvalues}, if $h \in H^2_{per} $ is an eigenfunction associated to the negative eigenvalue $-\kappa^2$ of $\mathcal{L}_1$ then
$$Q(h, c h')=Q_1(h)=(\mathcal{L}_1 h, h)_{L^2_{per}}= -\kappa^2 \big\|h\big\|^2_{L^2_{per}}<0,$$
which implies that the number of negative eigenvalues of $\mathcal{L}$ is at least $1$. 
\end{remark}

\indent Next result gives the precise spectral information of the operator $\mathcal{L}_{1\Pi}$ in $(\ref{opconstrained12})$. Here and in what follows, we only consider the case $c\in[0,1)$.

\begin{lemma}\label{leman1}
 There exists $c^{*}\in (0,1)$ such that:
	\begin{itemize}
		\item[(i)] for all $c \in [0,c^*)$, operator 
		$
		\mathcal{L}_{{1\Pi}}
		$
		has exactly one negative eigenvalue which is simple. Zero is a simple eigenvalue with eigenfunction $\varphi'$;
		\item[(ii)]  For all $c \in (c^*,1)$, operator 
		$
		\mathcal{L}_{{1\Pi}}
		$ has exactly two negative eigenvalues which are simple. Zero is a simple eigenvalue with eigenfunction $\varphi'$;
		\item[(iii)] If $c=c^*$, operator $\mathcal{L}_{1\Pi}$ has one negative eigenvalue which is simple and zero is a double eigenvalue.
	\end{itemize}
\end{lemma}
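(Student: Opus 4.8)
The plan is to reduce the entire statement to the sign of the single scalar $D_1=(\mathcal{L}_1^{-1}1,1)_{L^2}$ and then to track that sign as $c$ runs over $[0,1)$. Indeed, Lemma \ref{2eigenvalues} gives $\text{n}(\mathcal{L}_1)=2$ and $\text{z}(\mathcal{L}_1)=1$ with $\Ker(\mathcal{L}_1)=[\varphi']$, so the index formulas \eqref{indexformula12}--\eqref{indexformula123} yield $\text{n}(\mathcal{L}_{1\Pi})=2-n_0-z_0$ and $\text{z}(\mathcal{L}_{1\Pi})=1+z_0$. Reading off \eqref{n0z0}, the three alternatives $D_1<0$, $D_1>0$, $D_1=0$ produce exactly the triples $(\text{n},\text{z})=(1,1)$, $(2,1)$ and $(1,2)$, which are precisely conclusions (i), (ii) and (iii). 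Thus it suffices to prove that there is a unique $c^*\in(0,1)$ at which $D_1$ vanishes, with $D_1<0$ on $[0,c^*)$ and $D_1>0$ on $(c^*,1)$.

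The crucial step is to make $D_1$ explicit. First I would note that $\int_0^L\varphi'\,dx=0$, so $1\perp\Ker(\mathcal{L}_1)=[\varphi']$ and $\mathcal{L}_1^{-1}1$ is well defined modulo the kernel. Using $\mathcal{L}_1(1)=1-3\varphi^2$ together with the profile equation \eqref{ode1} and the quadrature identity \eqref{1.EC}, a short computation gives $\mathcal{L}_1(\varphi^2)=-3\varphi^2-4A$, whence
\begin{equation*}
\mathcal{L}_1(\varphi^2-1)=-(4A+1).
\end{equation*}
Since the right-hand side is a nonzero constant, this identifies $\mathcal{L}_1^{-1}1=(1-\varphi^2)/(4A+1)$ up to a multiple of $\varphi'$, and therefore, because $(\varphi',1)_{L^2}=0$,
\begin{equation*}
D_1=\frac{1}{4A+1}\left(L-\int_0^L\varphi^2\,dx\right).
\end{equation*}

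Next I would evaluate the two ingredients using the explicit cnoidal profile \eqref{cnoidal}. From $\beta_2^2=\tfrac{2k^2}{2k^2-1}$ and $\beta_1^2\beta_2^2=4A$ one gets $A=\tfrac{k^2(1-k^2)}{(2k^2-1)^2}$ and the pleasant simplification $4A+1=(2k^2-1)^{-2}>0$. Using $b=4\K(k)/L$ and the standard value $\int_0^{4\K}\cn^2(u,k)\,du=\tfrac{4}{k^2}[\E(k)-(1-k^2)\K(k)]$, I would obtain $\int_0^L\varphi^2\,dx=\tfrac{2L[\E-(1-k^2)\K]}{(2k^2-1)\K}$, and after cancellation
\begin{equation*}
D_1=\frac{(2k^2-1)L}{\K(k)}\big(\K(k)-2\E(k)\big).
\end{equation*}
As $2k^2-1>0$ on $\left(\tfrac{1}{\sqrt2},1\right)$, the sign of $D_1$ is that of $\K(k)-2\E(k)$.

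Finally I would analyze $\K-2\E$ as a function of $k$. Differentiating with the standard formulas $\tfrac{d\K}{dk}=\tfrac{\E-(1-k^2)\K}{k(1-k^2)}$ and $\tfrac{d\E}{dk}=\tfrac{\E-\K}{k}$ gives
\begin{equation*}
\frac{d}{dk}\big(\K-2\E\big)=\frac{(2k^2-1)\E+(1-k^2)\K}{k(1-k^2)}>0,\qquad k\in\left(\tfrac{1}{\sqrt2},1\right),
\end{equation*}
so $\K-2\E$ is strictly increasing; it is negative near $k=\tfrac{1}{\sqrt2}$ and tends to $+\infty$ as $k\to1^-$, hence vanishes at a unique $k^*$. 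Since $\omega=1-c^2$ together with \eqref{ccn} sets up a smooth strictly monotone bijection between $c\in[0,1)$ and $k$, this $k^*$ corresponds to a unique $c^*$, across which $D_1$ changes sign from negative to positive; combined with the index bookkeeping of the first paragraph this gives (i)--(iii). I expect the main obstacle to be twofold: discovering the algebraic identity $\mathcal{L}_1(\varphi^2-1)=\text{const}$ that renders $D_1$ computable in closed form, and the monotonicity estimate for $\K-2\E$. Some care is also needed to verify that the sign is indeed negative at $c=0$ (equivalently $k(0)<k^*$), which is what places $c^*$ strictly inside $(0,1)$.
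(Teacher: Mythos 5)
Your proposal is correct, and it arrives at exactly the paper's formula $D_1=-\frac{L(2k^2-1)}{\K(k)}\left(2\E(k)-\K(k)\right)$, but by a genuinely different route at the one nontrivial step, namely inverting $\mathcal{L}_1$ on constants. The paper quotes the explicit Lam\'e spectrum of $\mathcal{L}_1$ (the eigenpairs $(\lambda_0,f_0)$, $(\lambda_4,f_4)$, whose eigenfunctions differ by a constant) and builds $\mathcal{L}_1^{-1}1$ from the combination $\frac{\lambda_4f_0-\lambda_0f_4}{\lambda_0\lambda_4}$; you instead derive from \eqref{ode1} and the quadrature \eqref{1.EC} the identity $\mathcal{L}_1(\varphi^2)=-3\varphi^2-4A$, hence $\mathcal{L}_1(\varphi^2-1)=-(4A+1)$ with $4A+1=(2k^2-1)^{-2}\neq 0$ — I verified this computation, and since $(1,\varphi')_{L^2}=0$ the kernel ambiguity indeed does not affect $D_1$. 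Your version is more self-contained (it uses only the first integrals of the ODE plus the standard value of $\int_0^{4\K}\cn^2$, not the first five Lam\'e eigenfunctions, which the paper anyway only needs here) and would survive in settings where the explicit spectrum is unavailable; in the end the two preimages coincide modulo $\Ker(\mathcal{L}_1)$, since both are affine functions of $\sn^2$. You also fill a small gap: the paper merely asserts that $k\mapsto 2\E(k)-\K(k)$ is decreasing, whereas you prove it, with the correct derivative $\frac{d}{dk}\left(\K-2\E\right)=\frac{(2k^2-1)\E+(1-k^2)\K}{k(1-k^2)}>0$. One caveat applies to both arguments and you are right to flag it: for fixed $L$, the range $c\in[0,1)$ corresponds to $k\in[k_0,1)$ with $4\K(k_0)\sqrt{2k_0^2-1}=L$, so the existence of $c^*\in(0,1)$ (equivalently $D_1<0$ at $c=0$) requires $k_0<k^*$, i.e.\ $L<4\K(k^*)\sqrt{2(k^*)^2-1}$; the paper's change of variables from $c$ to $k$ silently treats the full interval $k\in\left(\frac{1}{\sqrt{2}},1\right)$ as admissible, so your explicit endpoint check is a point where the proposal is more careful than the published proof rather than a defect of your argument.
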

\begin{proof}
This  was essentially proved in \cite{kapitulodeco}; but here we give a slightly different proof.
Since the function 
	$$
	k\in \left(\frac{1}{\sqrt{2}},1\right)\mapsto c=\sqrt{1-\frac{1}{16}\frac{L^2}{\K(k)^2 (2k^2-1)}}\in (0,1)
	$$
	is increasing, it suffices to work with the parameter $k$ instead of $c$.
	
	 The first five eigenvalues and eigenfunctions of $\mathcal{L}_1$ are well-known (see, for instance, \cite{angulo} or \cite{kapitulodeco}). In particular, the first and fifth eigenvalues are
$$
\lambda_0:= \frac{1-2k^2-2\sqrt{1-k^2+k^4}}{2k^2-1} \quad \text{and} \quad \lambda_4:= \frac{1-2k^2+2\sqrt{1-k^2+k^4}}{2k^2-1}, 
$$
with respective eigenfunctions
$$f_0(x):=k^2 \sn^2(bx;k)-\frac{1}{3}(1+k^2+\sqrt{1-k^2+k^4})$$
and
$$f_4(x):=k^2 \sn^2(bx;k)-\frac{1}{3}(1+k^2-\sqrt{1-k^2+k^4}).$$
By observing that
$$f_0(x)-f_4(x)= -\frac{2}{3} \sqrt{1-k^2+k^4}.$$
we obtain
$$
\mathcal{L}_1(\lambda_4f_0-\lambda_0 f_4)= -\frac{\lambda_0 \lambda_4}{3}\sqrt{1-k^2+k^4},
$$
and
\begin{equation}\label{inverseL1}
-\frac{3}{2 \sqrt{1-k^2+k^4}} \mathcal{L}_1 \left( \frac{\lambda_4 f_0 - \lambda_0 f_4}{\lambda_0 \lambda_4}\right)=1.
\end{equation}
Therefore,
\begin{equation*}
	D_1=(\mathcal{L}_1^{-1} 1, 1 )_{L^2_{per}} =-\frac{3}{2} \frac{\lambda_4(f_0,1)_{L^2_{per}}-\lambda_0(f_4,1)_{L^2_{per}}}{\sqrt{1-k^2+k^4} \lambda_0 \lambda_4}.
\end{equation*}
Using the explicit form of $f_0$ and $f_4$, we are able to compute the above inner products to get
\begin{equation}\label{D1}
	D_1= -\frac{L(2k^2-1)}{\K(k)}\left( 2\E(k)-\K(k)\right).
\end{equation}
Since $-\frac{L(2k^2-1)}{\K(k)}<0$ for all $k \in \left(\frac{1}{\sqrt{2}},1\right),$
we have that $D_1=0$ if and only if $2\E(k)-\K(k)=0$. This is achieved at a unique $k=k^{*}\approx 0.908$. In addition, since $k\mapsto2\E(k)-\K(k)$ is a decreasing function, we deduce
\begin{equation*}
	\left\{
	\begin{array}{l}
D_1<0, \quad \mbox{if}\;\; k \in \left(\frac{1}{\sqrt{2}},k^*\right), \\
D_1>0, \quad \mbox{if} \;\; k \in \left(k^*,1\right).
\end{array}
\right.
\end{equation*}

For $k \in \left(\frac{1}{\sqrt{2}},k^*\right)$, from \eqref{indexformula12}, \eqref{indexformula123}, and Lemma \ref{2eigenvalues} we obtain
\begin{equation*}
\text{n}(\mathcal{L}_{1\Pi})=2-1-0=1 \qquad \mbox{and} \qquad  \text{z}(\mathcal{L}_{1\Pi})=1+0=1.
\end{equation*}
that is, $\mathcal{L}_{1\Pi}$ has only one negative eigenvalue and zero is a simple eigenvalue. These facts establish (i). On the other hand, for  $k \in \left(k^*,1\right)$ we obtain
$$
	\text{n}(\mathcal{L}_{1\Pi})=2-0-0=2 \qquad \mbox{and} \qquad 	\text{z}(\mathcal{L}_{1\Pi})=1+0=1,
$$
which gives (ii). Finally, at $k=k^*$,
$$
\text{n}(\mathcal{L}_{1\Pi})=2-0-1=1 \qquad \mbox{and} \qquad  \text{z}(\mathcal{L}_{1\Pi})=1+1=2,
$$
which yields (iii) and completes the proof of the lemma.
\end{proof}

Let us turn our attention to the operator $\mathcal{L}$ defined in \eqref{matrixop313}. We first prove the following.

\begin{lemma}\label{lema00}
For any $c\in[0,1)$ we have $\Ker(\mathcal{L})=[(\varphi',c\varphi'')]$.
\end{lemma}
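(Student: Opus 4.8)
The plan is to exploit the block structure of $\mathcal{L}$ to reduce the kernel computation to the already-understood scalar operator $\mathcal{L}_1$ from \eqref{linoperat}. First I would verify the inclusion $[(\varphi',c\varphi'')]\subseteq\Ker(\mathcal{L})$ directly. Applying $\mathcal{L}$ to $(\varphi',c\varphi'')$, the second component gives $-c\partial_x\varphi'+c\varphi''=0$ at once, while the first component equals $(-\partial_x^2-3\varphi^2+1)\varphi'+c^2\varphi'''$. Since $\mathcal{L}_1\varphi'=0$ means $-(1-c^2)\varphi'''-3\varphi^2\varphi'+\varphi'=0$, i.e. $-\varphi'''-3\varphi^2\varphi'+\varphi'=-c^2\varphi'''$, the first component reduces to $-c^2\varphi'''+c^2\varphi'''=0$. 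Hence $(\varphi',c\varphi'')$ lies in the kernel.

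For the reverse inclusion, I would take an arbitrary $(u,v)\in\Ker(\mathcal{L})$ and read off the two scalar equations. The second one, $-c\partial_x u+v=0$, forces $v=cu'$; this is consistent with the regularity dictated by the domain, since $u\in H^2_{per}$ yields $v=cu'\in H^1_{per}$. Substituting $v=cu'$ into the first equation $-u''-3\varphi^2u+u+cv'=0$ produces $-(1-c^2)u''-3\varphi^2u+u=0$, which is precisely $\mathcal{L}_1u=0$. By Lemma \ref{2eigenvalues} we have $\Ker(\mathcal{L}_1)=[\varphi']$, so $u=\alpha\varphi'$ for some $\alpha\in\mathbb{R}$, and therefore $v=cu'=\alpha c\varphi''$. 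This gives $(u,v)=\alpha(\varphi',c\varphi'')$ and closes the argument.

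The argument has essentially no hard step: the entire content is the observation that the lower equation of the system is algebraic in $v$ and, once solved, collapses the quadratic pencil $\mathcal{L}$ onto the scalar operator $\mathcal{L}_1$, whose kernel is known. The only point requiring a little care is the bookkeeping of function spaces — confirming that the substitution $v=cu'$ respects the domain $D(\mathcal{L})$ so that every differentiation performed is legitimate — but this is routine once the domain is fixed as a product of periodic Sobolev spaces. In particular, none of the Floquet theory or Morse-index machinery is needed at this stage; that apparatus is reserved for \emph{counting} negative eigenvalues rather than for locating the kernel.
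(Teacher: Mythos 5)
Your proof is correct and follows essentially the same route as the paper: the paper likewise solves the algebraic second equation $v=cf'$, substitutes into the first to reduce the system to $\mathcal{L}_1 f=0$, and concludes via Lemma \ref{2eigenvalues} that $\Ker(\mathcal{L})=[(\varphi',c\varphi'')]$. Your explicit check of the inclusion $[(\varphi',c\varphi'')]\subseteq\Ker(\mathcal{L})$ is a harmless addition that the paper leaves implicit in its ``if and only if'' formulation.
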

\begin{proof}
Note that $(f,g)$ belongs to $\Ker(\mathcal{L})$ if and only if
\[
\begin{cases}
-f''+f-3\varphi^2f+cg'=0,\\
-cf'+g=0.
\end{cases}
\]
By substituting the second equation in the first one we see that $f$ must belong to $\Ker(\mathcal{L}_1)$. Hence, the result follows from Lemma \ref{2eigenvalues}.
\end{proof}

\begin{proposition}   \label{teoeigenKG}
 For $c \in [0, 1)$, the operator 
$
\mathcal{L}
$ has two negative eigenvalues which are simple and zero is the third eigenvalue with eigenfunction $(\varphi',c\varphi'')$. Moreover, the rest of the spectrum is constituted by a discrete set bounded away from zero. 
\end{proposition}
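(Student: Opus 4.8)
The plan is to read off the spectral structure of $\mathcal{L}$ from that of the scalar operator $\mathcal{L}_1$ (Lemma \ref{2eigenvalues}) via the quadratic-form identity recorded above,
\[
Q(u,v)=Q_1(u)+\big\|cu'-v\big\|_{L^2}^2,
\]
where $Q_1$ is the form of $\mathcal{L}_1$. Since $\mathcal{L}$ is self-adjoint and bounded below (indeed $Q(u,v)\ge Q_1(u)\ge -C\|u\|_{L^2}^2$, while $Q\to+\infty$ as $\|v\|_{L^2}\to\infty$ with $u$ fixed), the number of negative eigenvalues $\text{n}(\mathcal{L})$ equals the maximal dimension of a subspace on which $Q$ is negative definite. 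Because $Q_1$ carries exactly two negative directions and the extra term $\|cu'-v\|_{L^2}^2$ is nonnegative, this identity is tailor-made to transfer the count. The heart of the proof is thus to establish $\text{n}(\mathcal{L})=2$; the kernel and the remaining spectrum are treated afterward.

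For the lower bound, I would let $h_0,h_1$ be the eigenfunctions of the two negative eigenvalues of $\mathcal{L}_1$ and take the two-dimensional subspace $V:=\operatorname{span}\{(h_0,ch_0'),(h_1,ch_1')\}\subset\mathbb{H}^1_{per}$. Every nonzero element of $V$ has the form $(u,cu')$ with $u\ne 0$ a combination of $h_0,h_1$, so $Q(u,cu')=Q_1(u)<0$, giving $\text{n}(\mathcal{L})\ge 2$ (refining Remark \ref{1eigenvalue}). For the upper bound, let $W$ be any subspace on which $Q$ is negative definite. The projection $(u,v)\mapsto u$ is injective on $W$: if two elements share a first coordinate, their difference $(0,v)\in W$ satisfies $Q(0,v)=\|v\|_{L^2}^2\ge 0$, forcing $v=0$. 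Moreover each nonzero $(u,v)\in W$ has $u\ne 0$ (otherwise $Q(0,v)\ge0$) and $Q_1(u)\le Q(u,v)<0$, so the image of $W$ is a subspace of $H^1_{per}$ of the same dimension on which $Q_1$ is negative. As $\mathcal{L}_1$ has only two negative eigenvalues, $\dim W\le 2$, hence $\text{n}(\mathcal{L})\le 2$. Together these give $\text{n}(\mathcal{L})=2$.

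Next, Lemma \ref{lema00} yields $\Ker(\mathcal{L})=[(\varphi',c\varphi'')]$, so $\text{z}(\mathcal{L})=1$ and zero is simple; since exactly two eigenvalues lie below it, zero is the third eigenvalue. To see that the two negative eigenvalues are simple and distinct, I would pass to a scalar reduction: for $\lambda\ne 1$ the equation $\mathcal{L}(u,v)=\lambda(u,v)$ forces $v=\tfrac{cu'}{1-\lambda}$ and reduces to a single second-order periodic eigenvalue problem for $u$, whose eigenspaces are at most two-dimensional. Combining this with $\text{n}(\mathcal{L})=2$ and with continuity of the isolated negative eigenvalues as $c$ runs from the decoupled case $c=0$—where $\mathcal{L}=\operatorname{diag}(\mathcal{L}_1,1)$ and the two negative eigenvalues coincide with the \emph{simple} negative eigenvalues of $\mathcal{L}_1$—one excludes a double negative eigenvalue for $c\in[0,1)$.

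Finally, for the claim that the rest of the spectrum is discrete and bounded away from zero, I would locate the accumulation of the spectrum. Setting $\omega=1-c^2>0$ and examining $\mathcal{L}$ at high frequency (equivalently, the reduced ODE as $\lambda\to\omega$), one finds that the eigenvalues can accumulate only at $\lambda=\omega>0$ and at $+\infty$, while in $(-\infty,\omega)$ the spectrum is discrete with finite total multiplicity. Hence zero is isolated and the positive part of the spectrum is bounded below by a positive constant, which is the desired conclusion. I expect this last point to be the genuine obstacle: unlike a standard Schr\"odinger operator, $\mathcal{L}$ has the identity in its lower-right block and so lacks compact resolvent, so one cannot invoke discreteness directly. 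The scalar reduction for $\lambda\ne 1$ is precisely the device that circumvents this and, as noted, simultaneously supports the simplicity argument.
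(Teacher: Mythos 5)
Your proposal is correct in substance but takes a genuinely different route from the paper's. The paper's own proof of Proposition \ref{teoeigenKG} is a short homotopy argument: at $c=0$ the operator decouples as $\mathcal{L}=\mathrm{diag}(\mathcal{L}_1,1)$, so $\mathrm{n}(\mathcal{L})=\mathrm{n}(\mathcal{L}_1)=2$ by Lemma \ref{2eigenvalues}; since Lemma \ref{lema00} pins the kernel at $[(\varphi',c\varphi'')]$ for \emph{every} $c\in[0,1)$, no eigenvalue can cross zero as $c$ varies, and continuity of the eigenvalues in $c$ propagates $\mathrm{n}(\mathcal{L})=2$ to all of $[0,1)$. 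You instead work at each fixed $c$ and transfer the negative count through the form identity $Q(u,v)=Q_1(u)+\|cu'-v\|_{L^2}^2$: the subspace $\{(u,cu')\}$ built over the negative eigenspaces of $\mathcal{L}_1$ gives $\mathrm{n}(\mathcal{L})\ge 2$, while the injectivity of $(u,v)\mapsto u$ on any negative-definite subspace gives $\mathrm{n}(\mathcal{L})\le 2$. What your approach buys is a parameter-free argument that does not rely on spectral continuity, and it forces you to confront what the paper leaves implicit: for the min--max count (and for the final claim of the proposition) one must know that the spectrum below zero is purely discrete, and your reduction $v=cu'/(1-\lambda)$, locating the accumulation of eigenvalues at $\lambda=\omega=1-c^2>0$, supplies exactly this --- the lower-right identity block indeed destroys compactness of the resolvent, so this step is not cosmetic. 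What the paper's route buys is brevity. One caveat: your argument for simplicity of the two negative eigenvalues (continuity from the decoupled case combined with the at-most-two-dimensional bound from the scalar reduction) does not by itself exclude a momentary collision of the two simple eigenvalues into a double one at some intermediate $c$; a coexistence phenomenon for the reduced Hill equation is not ruled out by continuity alone. But the paper's proof is silent on simplicity of the negative eigenvalues as well (its continuity argument only controls the count with multiplicity), so on this point your treatment is no weaker than the original.
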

\begin{proof}
For $c=0$, we have by $(\ref{matrixop313})$ 

\begin{equation}\label{matrixop3133}
	\displaystyle \mathcal{L}=\left(
	\begin{array}{ccc}
		-\partial_x^2-3\varphi^2+1  & &0\\\\
		\ \ \ \ 0 & & 1
	\end{array}\right)=\left(
\begin{array}{ccc}
\mathcal{L}_1  & &0\\\\
 0 & & 1
\end{array}\right).
\end{equation}
Since operator $\mathcal{L}$ is diagonal and by Lemma $\ref{2eigenvalues}$ we have $n(\mathcal{L}_1)=2$, it follows by Lemma $\ref{lema00}$ and the continuity of the eigenvalues in terms of $c\in[0,1)$ that $n(\mathcal{L})=2$ for all $c\in[0,1)$.

\end{proof}

In order to count the number of negative eigenvalues of $\mathcal{L}_{\Pi}$ in $(\ref{opconstrained2})$, we need to use Lemma $\ref{leman1}$. First, we see that $\mathcal{L}_{\Pi}$ is the constrained operator $\mathcal{L}$ defined in $L_{per,m}^2\times L_{per}^2$ with constrained space $S:= [(1,0),(0,1)] \subset \Ker(\mathcal{L})^{\perp}=[(\varphi',c\varphi'')]^\perp$.

\begin{proposition}\label{teoeigenKGconst} There exists $c^{*}\in (0,1)$ such that:
	\begin{itemize}
		\item[(i)] for all $c \in [0,c^*)$, operator 
		$
		\mathcal{L}_{{\Pi}}
		$
		has exactly one negative eigenvalue which is simple. Zero is a simple eigenvalue with eigenfunction $(\varphi',c\varphi'')$;
		\item[(ii)]  For all $c \in (c^*,1)$, operator 
		$
		\mathcal{L}_{{\Pi}}
		$ has exactly two negative eigenvalues which are simple. Zero is a simple eigenvalue with eigenfunction $(\varphi',c\varphi'')$;
		\item[(iii)] If $c=c^*$, operator $\mathcal{L}_{\Pi}$ has one negative eigenvalue which is simple and zero is a double eigenvalue.
	\end{itemize}
\end{proposition}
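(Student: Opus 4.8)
The plan is to apply the Index Theorem for self-adjoint operators (the same tool used in Lemma \ref{leman1}) directly to the full operator $\mathcal{L}$, this time with the two-dimensional constraint space $S=[(1,0),(0,1)]$, and to reduce all the spectral bookkeeping to the scalar quantity $D_1=(\mathcal{L}_1^{-1}1,1)_{L^2}$ already analyzed in Lemma \ref{leman1}. By Proposition \ref{teoeigenKG} and Lemma \ref{lema00} we have $n(\mathcal{L})=2$ and $z(\mathcal{L})=1$ with $\Ker(\mathcal{L})=[(\varphi',c\varphi'')]$, and the rest of the spectrum is discrete and bounded away from zero; moreover both constraint vectors are orthogonal to the kernel, since $\int_0^L \varphi'\,dx=\int_0^L \varphi''\,dx=0$. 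Hence the hypotheses of the theorem are in force.

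First I would compute the symmetric $2\times 2$ matrix $D$ with entries $D_{ij}=(\mathcal{L}^{-1}\chi_i,\chi_j)_{\mathbb{L}^2}$, where $\chi_1=(1,0)$ and $\chi_2=(0,1)$. Solving $\mathcal{L}(f,g)=\chi_1$, the second equation gives $g=cf'$, and substituting into the first collapses the cross terms $c\partial_x$ to leave exactly $\mathcal{L}_1 f=1$; thus $\mathcal{L}^{-1}\chi_1=(\mathcal{L}_1^{-1}1,\,c(\mathcal{L}_1^{-1}1)')$ modulo the kernel. For $\chi_2$ one checks directly that $(0,1)$ is an exact solution of $\mathcal{L}(f,g)=\chi_2$, because $1'=0$ forces the first component to vanish and the second to equal $1$. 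Feeding these into the inner products and using periodicity to annihilate the boundary integral $c\int_0^L (\mathcal{L}_1^{-1}1)'\,dx=0$, I find that the off-diagonal entries vanish and $D=\mathrm{diag}(D_1,L)$, with $D_{11}=(\mathcal{L}_1^{-1}1,1)_{L^2}=D_1$ and $D_{22}=\|1\|_{L^2}^2=L>0$. (Each $D_{ij}$ is independent of the kernel ambiguity in $\mathcal{L}^{-1}\chi_i$ precisely because $\chi_j\perp\Ker(\mathcal{L})$.)

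Since $D$ is diagonal with one strictly positive entry $L$, its inertia is governed entirely by $D_1$, namely $n(D)=n(D_1)$ and $z(D)=z(D_1)$. The index formulas then read $n(\mathcal{L}_\Pi)=n(\mathcal{L})-n(D)-z(D)=2-n(D_1)-z(D_1)$ and $z(\mathcal{L}_\Pi)=z(\mathcal{L})+z(D)=1+z(D_1)$. Invoking the sign analysis of $D_1$ from Lemma \ref{leman1} (recall $D_1<0$ for $k\in(1/\sqrt2,k^*)$, $D_1>0$ for $k\in(k^*,1)$, and $D_1=0$ only at $k=k^*$, with $c^*:=c(k^*)$ obtained through the increasing map $k\mapsto c$) immediately yields the three cases for the pair $(n(\mathcal{L}_\Pi),z(\mathcal{L}_\Pi))$: namely $(1,1)$ for $c\in[0,c^*)$, $(2,1)$ for $c\in(c^*,1)$, and $(1,2)$ at $c=c^*$. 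Finally, to identify the kernel I note that $(\varphi',c\varphi'')\in\Ker(\mathcal{L})$ lies in $S^\perp$ (again by periodicity) and hence in $\Ker(\mathcal{L}_\Pi)$; in cases (i) and (ii), $z(\mathcal{L}_\Pi)=1$ pins down $\Ker(\mathcal{L}_\Pi)=[(\varphi',c\varphi'')]$ exactly.

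The main obstacle is the matrix computation of $D$: one must confirm that the extra constraint $(0,1)$ decouples from $(1,0)$ and contributes a strictly positive diagonal entry, so that it leaves the negative and zero counts untouched and the conclusion mirrors Lemma \ref{leman1}. The remaining care is bookkeeping — verifying solvability of $\mathcal{L}(f,g)=\chi_i$ via orthogonality to the kernel, checking that the inner products are well defined modulo $\Ker(\mathcal{L})$, and ensuring that the spectral hypotheses of the Index Theorem follow from Proposition \ref{teoeigenKG}.
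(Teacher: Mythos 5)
Your proposal is correct and takes essentially the same approach as the paper: both reduce the problem to the $2\times 2$ constraint matrix $D=\mathrm{diag}(D_1,L)$ — using that $\mathcal{L}(0,1)=(0,1)$ and that $\mathcal{L}^{-1}(1,0)=\bigl(\mathcal{L}_1^{-1}1,\,c(\mathcal{L}_1^{-1}1)'\bigr)$ with off-diagonal entries vanishing by periodicity — and then apply the Index Theorem together with the sign analysis of $D_1$ from Lemma \ref{leman1}. The only (cosmetic) difference is that you solve $\mathcal{L}(f,g)=(1,0)$ directly by substituting $g=cf'$, whereas the paper writes $\mathcal{L}^{-1}(1,0)=\mathcal{L}^{-1}(1,1)-(0,1)$; both yield the same matrix $D$ and the same three cases.
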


\begin{proof} For
$
\tilde{f}= -\frac{3}{2 \sqrt{1-k^2+k^4}} \left( \frac{\lambda_4 f_0 - \lambda_0 f_4}{\lambda_0 \lambda_4}\right)$, we have
 by   \eqref{inverseL1} that $\mathcal{L}_1^{-1}1=\tilde{f}$. On the other hand, associated to the constrained set $S$, let us define the matrix
 
 \begin{equation}\label{matrixD}
 	D:=\left[\begin{array}{llll}(\mathcal{L}^{-1}(1,0),(1,0))_{\mathbb{L}^2_{per}}& & (\mathcal{L}^{-1}(1,0),(0,1))_{\mathbb{L}^2_{per}}\\\\
 		(\mathcal{L}^{-1}(1,0),(0,1))_{\mathbb{L}^2_{per}}& & (\mathcal{L}^{-1}(0,1),(0,1))_{\mathbb{L}^2_{per}}\end{array}\right].
 	\end{equation}
Since $\mathcal{L}(0,1)=(0,1)$ and $(1,1)=(1,0)+(0,1)$, we have $\mathcal{L}^{-1}(1,0)=\mathcal{L}^{-1}(1,1)-\mathcal{L}^{-1}(0,1)=\mathcal{L}^{-1}(1,1)-(0,1)$ and
 \begin{equation}\label{D}
\left( \mathcal{L}^{-1}(1,0), (1,0) \right)_{\mathbb{L}^2_{per}}= (\tilde{f},1)_{L^2_{per}}  = ( \mathcal{L}_1^{-1} 1,1)_{L^2_{per}}=D_1.
\end{equation}
In addition, 
\begin{equation}\label{D12}
		\left( \mathcal{L}^{-1}(1,0), (0,1) \right)_{\mathbb{L}^2_{per}}=\left(\mathcal{L}^{-1}(1,1),(0,1)\right)_{\mathbb{L}^2_{per}}-\left(\mathcal{L}^{-1}(0,1),(0,1)
		\right)_{\mathbb{L}^2_{per}}=L-L=0,\end{equation}
and 
\begin{equation}\label{D123}
	\left( \mathcal{L}^{-1}(0,1), (0,1) \right)_{\mathbb{L}^2_{per}}=\left((0,1),(0,1)\right)_{\mathbb{L}^2_{per}}=L.\end{equation}
From \eqref{D}, \eqref{D12} and \eqref{D123}, we obtain that $D$ in \eqref{matrixD} is now given by $D=\left[\begin{array}{cc}D_1&  0\\
	0&  L\end{array}\right]$. Since $\det(D)=D_1L$, the result follows by a direct application of Lemma $\ref{leman1}$ and the Index Theorem.

\end{proof}

\subsection{Spectral analysis for the Schr\"odinger equation} \label{EspectralAnalysisSchrodinger}

Let $L>0$ be fixed. Throughout this subsection, for $\omega>0$, we let $\varphi=\varphi_{\omega}$ be the periodic solution of $(\ref{ode2})$ given by Proposition \ref{cnoidalsol2}. As we already mention, we will prove an stability result in the space of zero mean functions. Therefore, here we study the spectrum of the constrained operator $\mathcal{L}_{\Pi}$ defined by
\begin{equation*}
	\mathcal{L}_{\Pi}=\left(\begin{array}{cccc}\mathcal{L}_{2\Pi}& & 0\\
		0& & \mathcal{L}_{3\Pi}\end{array}\right),
	\end{equation*}
where $\mathcal{L}_{2\Pi}$ and $\mathcal{L}_{3\Pi}$ are defined by $(\ref{opconstrained123})$ and $(\ref{opconstrained1234})$, respectively.

First of all, we recall that operators $\mathcal{L}_2$ and $\mathcal{L}_3$ in $(\ref{L2L3})$ satisfy  $\text{n}(\mathcal{L}_2)=2$, $\text{n}(\mathcal{L}_3)=1$ and $\text{z}(\mathcal{L}_2)=\text{z}(\mathcal{L}_3)=1$ (see \cite[Theorems 3.2 and 3.4]{angulo}).
So, repeating the steps as in Lemma \ref{leman1} with $\mathcal{L}_{2\Pi}$ in place of $\mathcal{L}_{1\Pi}$, we  infer the existence of $k^* \simeq 0.908$ such that
$(\mathcal{L}_2^{-1} 1,1)_{L^2_{per}} <0$ for all $k \in \left( \frac{1}{\sqrt{2}},k^* \right),$ $(\mathcal{L}_2^{-1} 1,1)_{L^2_{per}} =0$ for $k=k^*$, and $(\mathcal{L}_2^{-1} 1,1)_{L^2_{per}} >0$ for all $k\in(k^*,1)$.
Applying  the Index Theorem we then deduce that $\text{n}(\mathcal{L}_{2\Pi})=1$ for all $k\in\left(\frac{1}{\sqrt{2}},k^*\right]$ and $\text{n}(\mathcal{L}_{2\Pi})=2$ for all $k\in(k^*,1)$. In addition $\text{z}(\mathcal{L}_{2\Pi})=1$ for $k\neq k^*$ and $\text{z}(\mathcal{L}_{2\Pi})=2$ at $k=k^*$.

Summarizing the arguments above, we have the following result.

\begin{proposition}\label{propn1}
 There exists $\omega^{*}>0$ such that:
 \begin{itemize}
 	\item[(i)] for all $\omega \in (0,\omega^*)$, operator 
 	$
 	\mathcal{L}_{{2\Pi}}
 	$
 	has exactly one negative eigenvalue which is simple. Zero is a simple eigenvalue with eigenfunction $\varphi'$;
 	\item[(ii)] For all $\omega \in (\omega^*,+\infty)$, operator 
 	$
 	\mathcal{L}_{{2\Pi}}
 	$ has exactly two negative eigenvalues which are simple. Zero is a simple eigenvalue with eigenfunction $\varphi'$;
 	\item[(iii)] If $\omega=\omega^*$, operator $\mathcal{L}_{2\Pi}$ has one negative eigenvalue which is simple and zero is a double eigenvalue.
 \end{itemize}
\end{proposition}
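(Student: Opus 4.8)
The plan is to transcribe the proof of Lemma \ref{leman1} almost verbatim, replacing $\mathcal{L}_1$, $\mathcal{L}_{1\Pi}$, $D_1$ by $\mathcal{L}_2$, $\mathcal{L}_{2\Pi}$, $D_2$, and then convert the resulting $k$-dependent trichotomy into an $\omega$-dependent one. First I would record that, by \cite[Theorem 3.2]{angulo}, the operator $\mathcal{L}_2=-\partial_x^2+\omega-3\varphi^2$ has precisely the same spectral picture as $\mathcal{L}_1$: exactly two simple negative eigenvalues, a simple zero eigenvalue with $\ker(\mathcal{L}_2)=[\varphi']$, and the rest of the spectrum discrete and positive. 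Since $\varphi=\beta_3\cn(b_1\cdot\,;k)$ from Proposition \ref{cnoidalsol2}, after the rescaling $y=b_1 x$ the operator $\mathcal{L}_2$ becomes a Lam\'e operator whose first and fifth eigenpairs $(\lambda_0,f_0)$ and $(\lambda_4,f_4)$ have the same algebraic form in $\sn^2(b_1 x;k)$ used in Lemma \ref{leman1}.

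Next I would compute $D_2=(\mathcal{L}_2^{-1}1,1)_{L^2}$ exactly as in \eqref{inverseL1}--\eqref{D1}. Using the identity $f_0-f_4=-\tfrac{2}{3}\sqrt{1-k^2+k^4}$ one inverts $\mathcal{L}_2$ on the constant function and evaluates the inner products with the standard formula for $\int_0^L\sn^2(b_1 x;k)\,dx$, obtaining an expression of the form
\begin{equation*}
D_2=-\frac{C\,(2k^2-1)}{\K(k)}\bigl(2\E(k)-\K(k)\bigr),\qquad C>0.
\end{equation*}
Thus the sign of $D_2$ is governed by the same factor $2\E(k)-\K(k)$ as $D_1$; since $k\mapsto 2\E(k)-\K(k)$ is decreasing with a unique zero at $k^*\approx 0.908$, we get $D_2<0$ on $\left(\tfrac{1}{\sqrt2},k^*\right)$, $D_2=0$ at $k^*$, and $D_2>0$ on $(k^*,1)$.

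I would then feed this into the index formulas \eqref{indexformula1234} with $\text{n}(\mathcal{L}_2)=2$ and $\text{z}(\mathcal{L}_2)=1$. On $\left(\tfrac{1}{\sqrt2},k^*\right)$ one has $n_0=1,z_0=0$, giving $\text{n}(\mathcal{L}_{2\Pi})=1$ and $\text{z}(\mathcal{L}_{2\Pi})=1$; on $(k^*,1)$ one has $n_0=z_0=0$, giving $\text{n}(\mathcal{L}_{2\Pi})=2$ and $\text{z}(\mathcal{L}_{2\Pi})=1$; and at $k^*$ one has $n_0=0,z_0=1$, giving $\text{n}(\mathcal{L}_{2\Pi})=1$ and $\text{z}(\mathcal{L}_{2\Pi})=2$. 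Since $\varphi'\in L^2_{per,m}$, the kernel is always spanned by $\varphi'$ (with one extra constrained direction exactly when $D_2=0$). Finally, by Proposition \ref{cnoidalsol2} and \eqref{omegacn2} the map $k\mapsto\omega=16\K(k)^2(2k^2-1)/L^2$ is a smooth increasing bijection of $\left(\tfrac{1}{\sqrt2},1\right)$ onto $(0,\infty)$, so $k^*$ corresponds to a unique $\omega^*>0$ and the three $k$-intervals map onto $(0,\omega^*)$, $\{\omega^*\}$, and $(\omega^*,\infty)$, yielding (i)--(iii).

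The proof carries no conceptual difficulty beyond Lemma \ref{leman1}; the two points requiring care are confirming that the explicit Lam\'e eigenfunctions $f_0,f_4$ keep their algebraic form after the $y=b_1 x$ rescaling (so that $D_2$ genuinely inherits the factor $2\E(k)-\K(k)$ and the same critical modulus $k^*$), and verifying the monotonicity $d\omega/dk>0$ on $\left(\tfrac{1}{\sqrt2},1\right)$ that makes the passage from $k$ to $\omega$ an unambiguous bijection.
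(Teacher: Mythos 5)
Your proposal is correct and follows essentially the same route as the paper: the paper itself proves Proposition \ref{propn1} by ``repeating the steps as in Lemma \ref{leman1} with $\mathcal{L}_{2\Pi}$ in place of $\mathcal{L}_{1\Pi}$'' --- computing $D_2=(\mathcal{L}_2^{-1}1,1)_{L^2}$ via the explicit Lam\'e eigenpairs, obtaining the same sign trichotomy governed by $2\E(k)-\K(k)$ with critical modulus $k^*\approx 0.908$, applying the index formulas \eqref{indexformula1234} with $\text{n}(\mathcal{L}_2)=2$, $\text{z}(\mathcal{L}_2)=1$, and passing from $k$ to $\omega$ through \eqref{omegacn2}. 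Your two flagged points of care (persistence of the algebraic form of $f_0,f_4$ under the rescaling $y=b_1x$, and monotonicity of $k\mapsto\omega=16\K(k)^2(2k^2-1)/L^2$ onto $(0,\infty)$) are exactly the details the paper leaves implicit, and both check out.
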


To apply the Index Theorem for $\mathcal{L}_{3\Pi}$, we implement a different approach from the one in Lemma $\ref{leman1}$. Indeed, we are not able to calculate the quantity $D_3:=(\mathcal{L}_3^{-1}1,1)_{L^2_{per}}$ using the explicit expressions of eigenvalues and eigenfunctions associated to the linear operator $\mathcal{L}_3$. To overcome this difficulty, we proceed as follows: let $y$ be the solution of the following initial-value problem (IVP)
\begin{equation}\label{Cauchyproblem} 
	\begin{cases}
		-y''+\omega y-\varphi^2 y=0, \\
		y(0)= 0, \\
		y'(0)=-\frac{1}{\varphi(0)}.
	\end{cases}
\end{equation}
 Using $(\ref{Cauchyproblem})$ and noting that the Wronskian of the solutions $\varphi$ and $y$ associated with the equation $(\ref{Cauchyproblem})$ is constant by Abel's formula, we can assume that the Wronskian of $y$ and $\varphi$ is equal to 1. This implies that $\{y,\varphi\}$ forms a fundamental set of solutions for the equation in \eqref{Cauchyproblem}. In addition,  $y$ is an odd function and there exists a constant $\theta$ such that (see, for instance, \cite[pages 4 and 8]{magnus}) 
\begin{equation}\label{thetadef}
	y(x+L)=y(x)+\theta \varphi(x), \qquad x\in\R.
\end{equation}
From the elementary theory of ODE's, it is well known that 
$$
p(x):=-\left(\int_0^xy(s)ds\right)\varphi(x)+\left(\int_0^x\varphi(s)ds\right)y(x)
$$ 
satisfies
$$
	-p''+\omega p-\varphi^2 p=1.
$$

We shall show that $p$ is indeed an $L$-periodic function. First of all note that \eqref{thetadef} and the fact that $y$ is odd  imply that $\int_0^Ly(x)dx=0$.  Thus, $$p(L)=-\left(\int_0^Ly(x)dx\right)\varphi(L)+\left(\int_0^L\varphi(x)dx\right)y(L)=0=p(0),$$
where in the second integral, we are using the fact that $\varphi$ has the zero mean property. Also, since
$$\begin{array}{lllll}\displaystyle p'(x)&=&\displaystyle-y(x)\varphi(x)-\left(\int_0^xy(s)ds\right)\varphi'(x)+\varphi(x)y(x)+\left(\int_0^x\varphi(s)ds\right)y'(x).\\\\
&=&-\displaystyle\left(\int_0^xy(s)ds\right)\varphi'(x)+\left(\int_0^x\varphi(s)ds\right)y'(x)\end{array}$$
we obtain $p'(L)=p'(0)=0$ and $p$ is $L$-periodic as desired. In particular we see that  $p$ satisfies the IVP
\begin{equation}\label{Cauchyproblem1} 
	\begin{cases}
		-p''+\omega p-\varphi^2 p=1, \\
		p(0)= 0, \\
		p'(0)=0.
	\end{cases}
\end{equation}
Problem \eqref{Cauchyproblem1} is suitable enough to perform numeric calculations. In fact, solving numerically  \eqref{Cauchyproblem1}, we conclude $D_3=(\mathcal{L}_3^{-1}1,1)_{L^2_{per}}=(p,1)_{L^2_{per}}<0$ for all $k\in\left(\frac{1}{\sqrt{2}},1\right)$, and hence for all $\omega>0$. An illustration of the behaviour of $D_3$, for  $L=2\pi$, appears in  Figure $4.1$.

\begin{figure}[h]
	\centering 
	\includegraphics[scale=0.3]{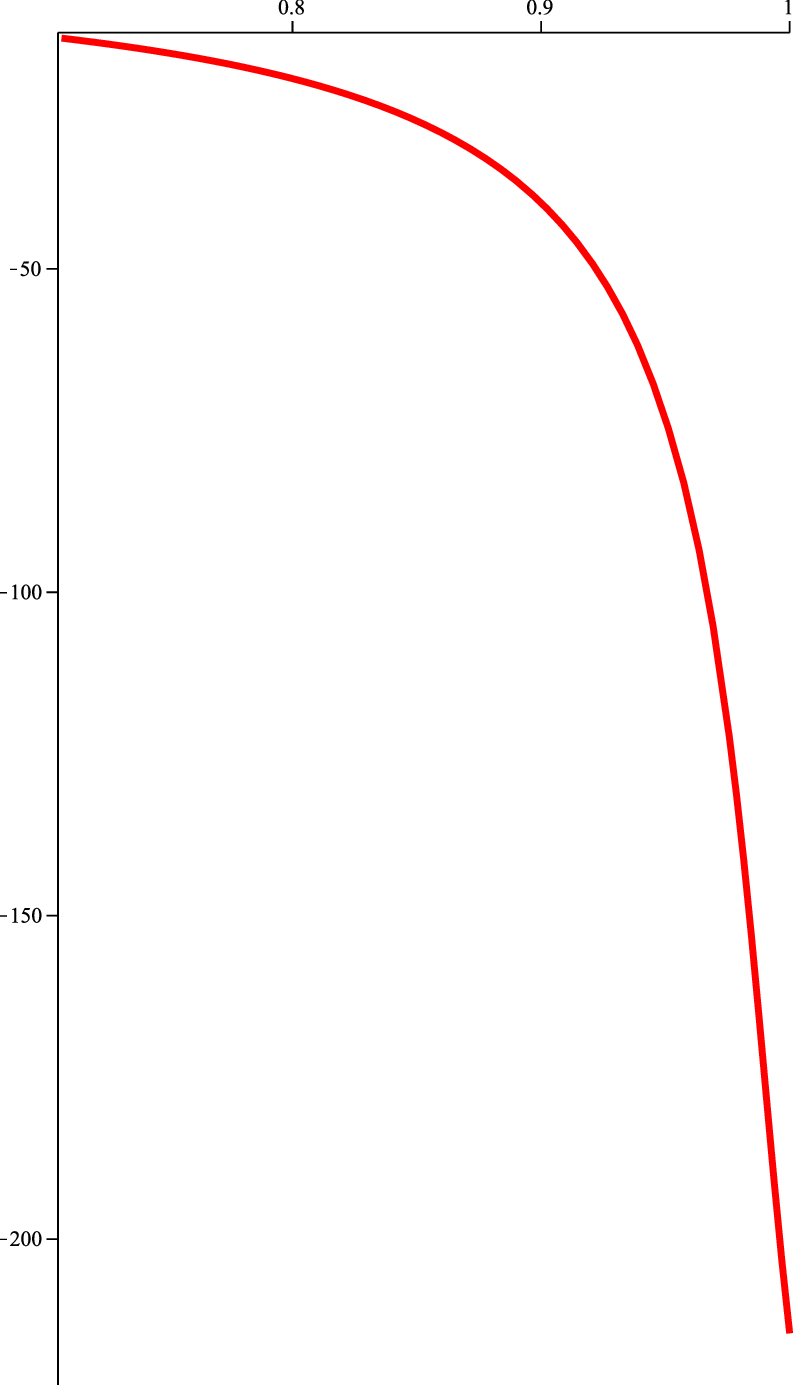} 
	\caption{Graphic of $D_3$ for $L=2\pi$.}
\end{figure}

\noindent The Index Theorem, (see  (\ref{indexformula1234})) finally yields
\begin{equation*}
		\text{n}(\mathcal{L}_{3\Pi}) = 1 - 1 - 0 = 0 \quad \mbox{and}\quad  	\text{z}(\mathcal{L}_{3\Pi})=1+0=1.
\end{equation*}
Hence, we have proved the following result.

\begin{proposition}\label{propn0}
 For any $\omega \in (0,+\infty)$, the operator 
$
\mathcal{L}_{{3\Pi}} 
$
defined in \eqref{opconstrained1234}  has no  negative eigenvalue. In addition, zero is a simple eigenvalue with eigenfunction $\varphi$.
\end{proposition}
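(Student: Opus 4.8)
The plan is to establish Proposition \ref{propn0} by combining the Index Theorem formula \eqref{indexformula1234} for $\mathcal{L}_{3\Pi}$ with the known spectral data for $\mathcal{L}_3$, namely $\text{n}(\mathcal{L}_3)=1$ and $\text{z}(\mathcal{L}_3)=1$ with $\ker(\mathcal{L}_3)=[\varphi]$. Since the formula reads $\text{n}(\mathcal{L}_{3\Pi})=\text{n}(\mathcal{L}_3)-n_0-z_0$ and $\text{z}(\mathcal{L}_{3\Pi})=\text{z}(\mathcal{L}_3)+z_0$, everything reduces to determining the sign of the single scalar $D_3:=(\mathcal{L}_3^{-1}1,1)_{L^2}$. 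If I can show $D_3<0$ for every $\omega>0$, then $n_0=1$ and $z_0=0$, yielding $\text{n}(\mathcal{L}_{3\Pi})=1-1-0=0$ and $\text{z}(\mathcal{L}_{3\Pi})=1+0=1$, which is exactly the claim. The one technical prerequisite is that $\mathcal{L}_3$ be invertible on the orthogonal complement of its kernel $[\varphi]$ and that $1$ lie in the appropriate range; this is legitimate because $\varphi$ has the zero mean property, so $(1,\varphi)_{L^2}=0$ and hence $1\in\ker(\mathcal{L}_3)^{\perp}$, making $(\mathcal{L}_3^{-1}1,1)_{L^2}$ well defined.

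The heart of the argument is to obtain a usable closed form for $\mathcal{L}_3^{-1}1$. Unlike the $\mathcal{L}_1$ case in Lemma \ref{leman1}, there is no convenient explicit eigenfunction expansion for $\mathcal{L}_3$, so I would instead construct an explicit $L$-periodic solution $p$ of the inhomogeneous equation $-p''+\omega p-\varphi^2 p=1$. First I would exhibit the fundamental system $\{\varphi,y\}$ for the homogeneous equation $-u''+\omega u-\varphi^2 u=0$: since $\varphi$ solves it by \eqref{KF4} and $y$ is defined via the IVP \eqref{Cauchyproblem} with $y(0)=0$, $y'(0)=1/\varphi(0)$, their Wronskian is constant and normalized to $1$. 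Using variation of parameters I would write the candidate particular solution $p(x)=-\big(\int_0^x y\big)\varphi(x)+\big(\int_0^x\varphi\big)y(x)$ and verify by direct differentiation that it solves the inhomogeneous ODE.

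The main obstacle — and the step requiring genuine care — is proving that $p$ is genuinely $L$-periodic, so that it represents $\mathcal{L}_3^{-1}1$ as an operator on the periodic space. Here I would exploit two structural facts: the Floquet-type relation $y(x+L)=y(x)+\theta\varphi(x)$ from \eqref{thetadef}, which holds because $\varphi$ is a periodic solution and $\{\varphi,y\}$ is a fundamental set, together with the oddness of $y$. Oddness combined with \eqref{thetadef} forces $\int_0^L y\,dx=0$, and the zero mean property of $\varphi$ gives $\int_0^L\varphi\,dx=0$; these two vanishing integrals are exactly what make both $p(L)=p(0)=0$ and, after differentiating $p$ and cancelling the $\pm\varphi y$ terms, $p'(L)=p'(0)=0$. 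By uniqueness for the corresponding IVP \eqref{Cauchyproblem1} this pins down $p$ and guarantees periodicity. Finally, with $\mathcal{L}_3^{-1}1=p$ established, the quantity $D_3=(p,1)_{L^2}=\int_0^L p\,dx$ reduces to a single definite integral depending on $\omega$ (equivalently on $k$). Since no elementary closed form is available, I would evaluate this integral numerically across the full parameter range $k\in(1/\sqrt2,1)$, concluding $D_3<0$ throughout, and illustrate the monotone negative behaviour in Figure \ref{fig12}; the Index Theorem then delivers the stated conclusion.
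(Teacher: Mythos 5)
Your proposal is correct and follows essentially the same route as the paper: reduce everything to the sign of $D_3$ via the Index Theorem, build $p=\mathcal{L}_3^{-1}1$ by variation of parameters from the fundamental system $\{\varphi,y\}$, prove periodicity of $p$ using the Floquet relation \eqref{thetadef}, the oddness of $y$, and the zero mean of $\varphi$, and then verify $D_3=(p,1)_{L^2}<0$ numerically over $k\in\left(\frac{1}{\sqrt{2}},1\right)$. The only point worth noting is that both your argument and the paper's ultimately rest on this numerical evaluation rather than an analytic proof of $D_3<0$, so your proposal inherits exactly the same limitation.
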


\begin{remark}
	It is important to mention that the behaviour of $D_3$ in Figure $4.1$ is given only for $2\pi-$periodic functions. A scaling argument can be used to justify that $D_3<0$ for all values of $L>0$. In fact, consider $\widetilde{\varphi}(z)=\frac{L}{\widetilde{L}}\varphi\left(\frac{L}{\widetilde{L}}z\right)$, where $\varphi$ solves $(\ref{KF4})$. Since $\varphi$ is $L-$periodic, we obtain that $\widetilde{\varphi}$ is $\widetilde{L}-$periodic and solves the equation $(\ref{KF4})$ with $\widetilde{\omega}:=\left(\frac{L}{\widetilde{L}}\right)^2\omega$ in place of $\omega$. By considering the new solution $\widetilde{\varphi}$ instead of $\varphi$ in the initial value problem $(\ref{Cauchyproblem})$, we obtain the non-periodic solution $\widetilde{y}(z)=y\left(\frac{L}{\widetilde{L}}z\right)$ that solves  the problem 
	\begin{equation}\label{Cauchyproblemtilde} 
		\begin{cases}
			-\widetilde{y}''+\widetilde{\omega}\widetilde{y}-\widetilde{\varphi}^2 \widetilde{y}=0, \\
			\widetilde{y}(0)= 0, \\
			\widetilde{y}'(0)=-\frac{L}{\widetilde{L}}\frac{1}{\varphi(0)}.
		\end{cases}
	\end{equation}
	The functions $\widetilde{y}(z)$ and $\widetilde{\varphi}$ are also related to each other through equality $(\ref{thetadef})$ with $\widetilde{L}$ in place of $L$ and $\widetilde{\theta}$ in place of $\theta$. Thus, by defining $$\widetilde{p}(z):=-\left(\int_0^z\widetilde{y}(t)dt\right)\widetilde{\varphi}(z)+\left(\int_0^z\widetilde{\varphi}(t)dt\right)\widetilde{y}(z),$$
	we obtain that $\widetilde{p}$  satisfies the problem $(\ref{Cauchyproblem1})$ with $\widetilde{\omega}$ in place of $\omega$ and $\widetilde{p}(z)=p\left(\frac{L}{\widetilde{L}}z\right)$. Doing the calculations as done before and considering $\widetilde{L}=2\pi$, we obtain $\int_0^Lp(t)dt=\frac{L}{2\pi}\int_{0}^{2\pi}\widetilde{p}(x)dx<0$ as requested.
\end{remark}

\section{Orbital instability of the cnoidal waves for the KG equation}\label{section5}

The goal of this section is to establish a result of orbital instability based on the classical theory contained in \cite{grillakis1} for the periodic  traveling wave solutions $\varphi$ in $(\ref{cnoidal})$  for the KG equation \eqref{KF2}.  Consider the restricted energy space $X:= H_{per,m}^1 \times L_{per,m}^2$. It is well known that \eqref{KF2} is invariant by translations. Thus, we can define for $x,r \in \mathbb{R}$ and $U=(u,v) \in X$  
 \begin{equation*}
\mathrm{T}_rU(x)=(u(x+r), v(x+r)). 
\end{equation*}
In what follows,  for $c\in(-1,1)$, we define
$$
\Phi:=(\varphi, c\varphi').
$$

Next we recall the definition of the orbital stability in this context.

\begin{definition}[Orbital Stability]\label{stadef}
The periodic wave $\Phi$ is said orbitally stable  if for all $\varepsilon>0$ there exists $\delta>0$ with the following property: if
$$
\|(u_0, v_0)-(\varphi, c\varphi')\|_{X}<\delta
$$
and $U=(u,v):=(u,u_t)$ is the weak solution of \eqref{KF2} in the interval $[0,t_0)$, for some $t_0>0$, with $U(0)=(u_0,v_0)$, then $U$ can be continued to a solution in $0 \leq t < \infty$ and
$$
\sup_{t \in \mathbb{R}_+}\inf_{s \in \mathbb{R}}\big\|U(t)-\mathrm{T}_s \Phi \big\|_X< \varepsilon.
$$

Otherwise,  $\Phi$ is said to be  orbitally unstable. In particular, this would happen in the case of solutions that cannot be continued.

\end{definition}


Before establishing our orbital instability result, we present some additional results concerning the existence of weak solutions for Cauchy problem associated with \eqref{KF2} using the  well known \textit{Potential Well Theory} (for further details, see \cite[Chapter 1]{lions1} and \cite{vitillaro}). First of all, for $\omega=1-c^2>0$ fixed, consider 
\begin{equation}\label{Ju}
	P_{\omega}(u)=\frac{1}{2}\int_0^L(\omega u_x^2+u^2)dx-\frac{1}{4}\int_0^Lu^4dx,
	\end{equation}
where $u\in H_{per,m}^1$. For a fixed $L>0$, the critical points of $P_{\omega}$ in $H_{per,m}^1$ are unique and  given in terms of cnoidal solutions as in $(\ref{cnoidal})$. Related to the functional $P_{\omega}$ in $(\ref{Ju})$, 
we have the Nehari manifold
\begin{equation*}\label{nehari}
	\mathcal{N}=\left\{u\in H_{per,m}^1\backslash\{0\};\ \int_0^L (\omega u_x^2+u^2)dx=\frac{1}{2}\int_0^Lu^4dx\right\}.
	\end{equation*}
It is not difficult to show that $P_{\omega}$ satisfies the assumptions of the Mountain Pass Theorem (see e.g. \cite{willem}). In addition, 
\begin{equation*}\label{d}
	d_{\omega}:=\inf_{v\in H_{per,m}^1\backslash\{0\}}\max_{t\geq0}P_{\omega}(tv),
	\end{equation*}
is a critical level for the functional $P_{\omega}$. By using classical results in \cite[Chapter 4]{willem}, we obtain 
\begin{equation}\label{d1}
	d_{\omega}=\min_{u\in \mathcal{N}}P_{\omega}(u)=P_{\omega}(\varphi_{\omega}).
	\end{equation}
\indent Concerning the cnoidal waves in $(\ref{Sol2})$, we obtain after some calculations that
\begin{equation}\label{Jpsi}
	P_{\omega}(\varphi_{\omega})=-\frac{L(K(k)-2)\left( \left(k^4-\frac{5}{3}k^2+\frac{2}{3}\right)K(k)+E(k)\left(\frac{4}{3}k^2-\frac{2}{3}\right)\right)}{K(k)(2k^2-1)^2}=\sqrt{\omega}P_{1}(\varphi_1),
	\end{equation}
where $k\in\left(\frac{1}{\sqrt{2}},1\right)$. In addition, by \eqref{Jpsi} it follows that $P_{\omega}(\varphi_{\omega})=0$ if and only if $K(k)-2=0$, that is, if and only if 
\begin{equation}
	\label{k1}k=k_1:\approx 0.802.
	\end{equation} 
Since $\lim_{k\rightarrow \frac{1}{\sqrt{2}}^{+}}P_{\omega}(\varphi_{\omega})=+\infty$, we see for a fixed $k\in \left(\frac{1}{\sqrt{2}},k_1\right)$ that $P_{\omega}(\varphi_{\omega})>0$ is finite. As a consequence, by $(\ref{d1})$ we have $P_{\omega}(\varphi_{\omega})\leq P_1(\varphi_1)$, so that $d_{\omega}=\sqrt{\omega}d_1\leq d_1$ for all $k\in\left(\frac{1}{\sqrt{2}},k_1\right)$ .\\
\indent In this case, let us define
\begin{equation}\label{W}
	W_{\omega}=\{u\in H_{per,m}^1;\ P_{\omega}(u)<d_{\omega}\}
	\end{equation}
and
\begin{equation}\label{W1}
	W_{1,\omega}=\left\{u\in W_{\omega};\ \int_0^L(\omega u_x^2+u^2)dx>\frac{1}{2}\int_0^Lu^4dx\right\}\cup\{0\}.
	\end{equation}
We have the following well-posedness result.	
\begin{theorem}\label{wellposedness1}
The Cauchy problem associated with \eqref{KF2} is globally well-posed in $X$ in the following sense: for each initial data $u_0\in W_{1,\omega}$ and $v_0\in L_{per,m}^2$ satisfying
\begin{equation}\label{estexist1}\frac{1}{2}||v_0||_{L_{per}^2}^2+P_1(u_0)<d_{\omega},
\end{equation}
 there exists a unique (weak) solution $u$ for the equation \eqref{KF2} in the sense that
 $$
 	\langle u_{tt}(\cdot,t), \phi\rangle_{H^{-1}_{per,m},H_{per,m}^1} + \int_{0}^{L} u_x(\cdot,t)\phi_x \;dx+\int_{0}^{L} u(\cdot,t)\phi \;dx - \int_{0}^{L} u(\cdot,t)^3\phi\;dx = 0, \, \mbox{ a.e. } t \in [0,T],
$$
 for all $\phi \in H^1_{per,m}$. Moreover, we have that $u\in C([0,T]; H_{per,m}^1)$ and $u_t\in C([0,T];L_{per,m}^2)$ for all $T>0$.
\end{theorem}
\begin{proof}
By using Galerkin's method, the proof can be done using similar arguments in \cite[Chapter 1, page 32]{lions1} and it gives the existence of a weak solution $u\in L^{\infty}(0,T; H_{per,m}^1)$ and $u_t\in L^{\infty}(0,T;L_{per,m}^2)$ for all $T>0$. In fact, for the case $\omega=1$ the proof follows identical. For the case $\omega\in(0,1)$, we need  some slight modifications. We can determine an a priori estimate to establish the existence of global solutions as
\begin{equation}\label{bound1}\begin{array}{lllll}
	\displaystyle\frac{1}{2}||u_x(t)||_{L_{per}^2}^2&=&\displaystyle E(u_0,v_0)-\displaystyle\frac{1}{2}||u(t)||_{L_{per}^2}^2-\frac{1}{2}||u_t(t)||_{L_{per}^2}^2+\frac{1}{4}||u(t)||_{L_{per}^4}^4\\\\
	&<&\displaystyle E(u_0,v_0)+\frac{\omega}{2}||u_x(t)||_{L_{per}^2}^2,
	\end{array}\end{equation}
where we are using the fact that $u_0\in W_{1,\omega}$ implies that $u(t)\in W_{1,\omega}$ a.e. $t\geq0$. Since $||u_x(t)||_{L_{per}^2}< \frac{1}{\sqrt{1-\omega}}\sqrt{2E(u_0,v_0)}$ a.e. $t\geq0$, it follows by the Poincar\'e-Wirtinger inequality and the Sobolev embedding $H_{per}^1\hookrightarrow L_{per}^4$ that $u\in L^{\infty}(0,T; H_{per,m}^1)$ and $u_t\in L^{\infty}(0,T;L_{per,m}^2)$ for all $T>0$. The uniqueness can be obtained by standard arguments and a Gronwall-type inequality. This fact enables us to conclude that $E(u,u_t)=E(u_0,v_0)$ and $F(u,u_t)=F(u_0,v_0)$ for all $t\in[0,T]$, so that both $u$ and $u_t$ are continuous in time as required. 
\end{proof}

\begin{remark}\label{wellposednessRemark}
It is important to highlight that Theorem $\ref{wellposedness1}$ can be considered in a smoother space. In fact according to  \cite[Remark 2.3, page 34]{lions1} it is possible to consider $u_0\in  H_{per}^2\cap W_{1,m}$ and $v_0\in H_{per,m}^1$ satisfying $(\ref{estexist1})$ in order to obtain (using Galerkin's method) that the solution $u$ of the Cauchy problem for the equation $(\ref{KF2})$ satisfies $u\in L^{\infty}(0,T; H_{per}^2\cap H_{per,m}^1)$, $u_t\in L^{\infty}(0,T;H_{per,m}^1)$ and $u_{tt}\in L^{\infty}(0,T;L_{per,m}^2)$. Thus, strong solutions in $H_{per}^2\cap H_{per,m}^1\times H_{per,m}^1$ can be considered in this case and it makes sense to see equation $(\ref{KF2})$ restricted to the space constituted by zero mean solutions. Therefore, we are enabled to consider the problem in determining the orbital stability of periodic waves with the zero mean property using the abstract approach in \cite{grillakis1} in the periodic context. Our approach for the equation $(\ref{KF2})$ follows in this direction.
\end{remark}

\subsection{Instability for the cnoidal wave.} 
\indent Let $L>0$ be fixed. For $c\in (-1,1)$, let us consider the cnoidal wave $\varphi$ given by $(\ref{cnoidal})$. The existence of the smooth curve $c \in I=(-1,1)\longmapsto\varphi_c$ given in Proposition \ref{cnoidalsol} allows to define the function $\mathsf{d}: I \subset \mathbb{R}\longrightarrow \mathbb{R} $ given by 
$\mathsf{d}(c)=E(\varphi,c\varphi')-cF(\varphi,c\varphi')$.
 As we already pointed out in the introduction, we have
\begin{eqnarray}\label{5.0}
\begin{array}{lllll}\mathsf{d}''(c)
=\displaystyle-\int_0^{L}(\varphi'(x))^2dx+2(1-\omega)\frac{d}{d\omega}\int_0^{L}(\varphi'(x))^2dx.
\end{array}
\end{eqnarray}

Now, using the explicit expressions in Remark $\ref{remarkcnoidalsol1}$ and formulas $(312.02),$ $(312.04)$ and $(361.02)$ of \cite{byrd}, we obtain that
\begin{eqnarray*}
\int_0^{L}(\varphi'(x))^2dx  &=&  \frac{2k^2}{2k^2-1}\frac{4\K(k)}{L}\int_{0}^{4\K(k)} {\rm sn}^2(u,k){\rm dn}^2(u,k)\;du \\
& = & \frac{32\K(k)}{3(2k^2-1)L}\Big((2k^2-1)\E(k)+(1-k^2)\K(k)\Big),
\end{eqnarray*}
In addition, since $\omega$ is a function of $k$, by \eqref{5.0} and the chain rule we have
\begin{equation}\begin{array}{llllll}\label{dc2}
\mathsf{d}''(c) &=& \displaystyle-\int_0^{L}(\varphi'(x))^2dx+2(1-\omega)\frac{d}{dk}\int_0^{L}(\varphi'(x))^2dx \, \left(\frac{d\omega}{dk}\right)^{-1}  \\
& & \\
& = &\displaystyle \frac{-32\K(k)}{3(2k^2-1)L}\Big((2k^2-1)\E(k)+(1-k^2)\K(k)\Big)+\\
& & \\
& & \displaystyle+\frac{64(1-\omega)}{3L}\frac{d}{dk}\left(\frac{\K(k)}{(2k^2-1)}\Big((2k^2-1)\E(k)+(1-k^2)\K(k)\Big)\right)\left(\frac{d\omega}{dk}\right)^{-1}.\\
\end{array}\end{equation}
In $(\ref{dc2})$, we denote
$$
m(k):=\frac{-32\K(k)}{3(2k^2-1)L} \left( (2k^2-1)\E(k)+(1-k^2)\K(k)\right)
$$
and 
$$
n(k):=\frac{64(1-\omega)}{3L}\frac{d}{dk}\left(\frac{K(k)}{(2k^2-1)}\Big((2k^2-1)E(k)+(-k^2+1)K(k)\Big)\right)\left(\frac{d\omega}{dk}\right)^{-1}.
$$
After some algebraic calculations (see Figure $\ref{fig1}$), we see that  $m(k)$ and $n(k)$ satisfies
$$
m(k)+n(k)<0,\; \text{for all} \;  k \in  \left(\frac{1}{\sqrt{2}},1\right)
$$
so that $\mathsf{d}''(c)<0,$ for $c \in [0,1)$.

\begin{figure}[h]\label{fig1}
\centering 
\includegraphics[width=6cm]{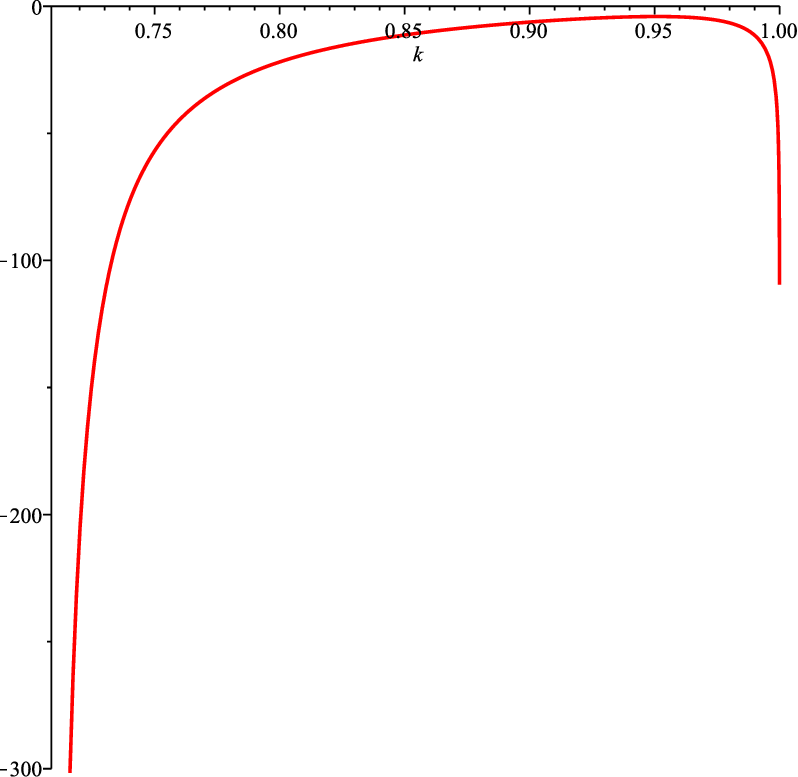} 
\caption{Graphic of $\mathsf{d}''$ as a function of $k\in\left(\frac{1}{\sqrt2},1\right)$.}
\end{figure}

Analysis above gives the following orbital instability result.

\begin{theorem}[Orbital instability of the cnoidal waves for the KG equation]\label{KGmaininstabilityresult}
Let $L>0$ be fixed. If $c \in [0,c(k_1))$, where $k_1$ is given in  (\ref{k1}) and $\varphi$ is the cnoidal solution given in Proposition \ref{cnoidalsol}, then the periodic wave $\Phi$ is orbitally unstable in $X$ in the sense of Definition $\ref{stadef}$. 
\end{theorem}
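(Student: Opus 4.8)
The plan is to apply the Grillakis–Shatah–Strauss stability/instability criterion for Hamiltonian systems, whose three hypotheses (i)--(iii) were laid out in the introduction. The strategy is to verify each hypothesis in turn and then read off instability from the sign of $\mathsf{d}''(c)$. Hypothesis (i), the existence of a smooth curve $c\in(-1,1)\mapsto\varphi_c\in H^2_{per}$ of solutions to \eqref{ode1}, is exactly the content of Proposition \ref{cnoidalsol}. Hypothesis (ii) requires that the constrained operator $\mathcal{L}_\Pi$ have precisely one negative eigenvalue, simple, with a simple zero eigenvalue generated by $(\varphi',c\varphi'')$; for $c\in[0,c^*)$ this is furnished by part (i) of Proposition \ref{teoeigenKGconst}, recalling that $c^*=c(k^*)$ with $k^*\approx0.908$. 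The kernel statement has already been isolated in Lemma \ref{lema00}.

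Next I would address hypothesis (iii), the non-degeneracy $\mathsf{d}''(c)\neq0$, together with the sign needed for instability. Here one uses the identity \eqref{5.0} for $\mathsf{d}''(c)$, substitutes the explicit elliptic-function expression for $\int_0^L(\varphi')^2\,dx$ computed via the integral formulas of \cite{byrd}, and reduces $\mathsf{d}''(c)$ to the sum $m(k)+n(k)$ displayed above. The graphical/algebraic analysis in Figure \ref{fig1} then shows $m(k)+n(k)<0$ for all $k\in(1/\sqrt2,1)$, so that $\mathsf{d}''(c)<0$ on the relevant range; in particular $\mathsf{d}''(c)\neq0$, securing (iii), and the strict negativity is precisely the instability-triggering sign in the Grillakis–Shatah–Strauss theory.

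With (i)--(iii) verified and $\mathsf{d}''(c)<0$, the abstract theorem of \cite{grillakis1} yields orbital instability of $\Phi=(\varphi,c\varphi')$. The one subtlety is that the GSS machinery presupposes enough well-posedness to make sense of the dynamics near the wave; this is why I would invoke Theorem \ref{wellposedness1}, which gives global well-posedness in $X=H^1_{per,m}\times L^2_{per,m}$ inside the potential well $W_{1,\omega}$, and which simultaneously explains the restriction to $c\in[0,c(k_1))$: the condition $k<k_1\approx0.802$ (equivalently $P_\omega(\varphi_\omega)>0$) is exactly what places $\varphi$ in the regime $d_\omega=\sqrt\omega\,d_1\le d_1$ where the stable set $W_{1,\omega}$ traps solutions and the continuation hypothesis of Definition \ref{stadef} can be met. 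Since $c(k_1)<c^*$, the hypotheses of Proposition \ref{teoeigenKGconst}(i) hold throughout $[0,c(k_1))$, so the instability conclusion is valid on the whole claimed interval.

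The main obstacle I anticipate is not any single hard estimate but rather the reconciliation of the analytic framework with the restricted energy space. Because the KG equation can blow up in finite time on $H^1_{per}\times L^2_{per}$ \cite{levine}, the instability must be stated and proved inside the zero-mean space $X$, and the entire spectral count (one negative direction for $\mathcal{L}_\Pi$ rather than two for the unconstrained $\mathcal{L}$) depends on restricting to $L^2_{per,m}$. Thus the delicate point is ensuring that the GSS argument is being applied consistently in $X$: the constrained operator, the potential-well well-posedness, and the definition of orbital stability must all refer to the same zero-mean space, and the interval $[0,c(k_1))$ must lie inside $[0,c^*)$ so that the negative index is exactly one. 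Verifying this compatibility, rather than the elliptic-integral computation, is where the argument requires the most care.
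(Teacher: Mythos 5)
Your proposal is correct and follows essentially the same route as the paper's own proof: verification of the Grillakis--Shatah--Strauss hypotheses via Proposition \ref{cnoidalsol}, the spectral count for $\mathcal{L}_{\Pi}$ from Proposition \ref{teoeigenKGconst}(i) (valid since $c(k_1)<c^{*}$), the sign $\mathsf{d}''(c)<0$ from the elliptic-integral reduction to $m(k)+n(k)<0$, and an application of \cite[Theorem 4.7]{grillakis1}, with the restriction to $c\in[0,c(k_1))$ coming from the potential-well well-posedness of Theorem \ref{wellposedness1} in the zero-mean space $X$. Your closing discussion of keeping the constrained operator, the well-posedness theory, and Definition \ref{stadef} consistently in $X$ matches the paper's framework rather than departing from it.
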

\begin{proof}
By Proposition \ref{teoeigenKGconst}, there exists only one negative eigenvalue for the operator $\mathcal{L}_{\Pi}$ in \eqref{opconstrained2} and it results to be simple. In addition, zero is a simple eigenvalue associated to the eigenfunction $(\varphi', c\varphi'')$. The result then follows from \cite[Theorem 4.7]{grillakis1} and the fact that $\mathsf{d}''(c)<0$ for all $c\in [0,c(k_1))\subset [0,c^{\ast})$.
\end{proof}

\begin{remark}
 In view of the results obtained in Proposition $\ref{teoeigenKGconst}$ and the considerations discussed in Remark $\ref{wellposednessRemark}$, it is reasonable to consider equation \eqref{KF2} restricted to the space consisting of solutions with the zero mean property. Moreover, we emphasize that our main result on orbital instability for equation \eqref{KF2}, presented in Theorem $\ref{KGmaininstabilityresult}$, strongly relies on the fact that orbital instability in a subspace implies orbital instability in the whole space. 
\end{remark}

\begin{remark}
	By symmetry and the fact that $\omega=1-c^2>0$, we can consider $c\in (-c(k_1),0)\subset (-c^{\ast},0)$ in Theorem $\ref{KGmaininstabilityresult}$ without further problems.
\end{remark}

\section{Orbital stability of the cnoidal waves for the NLS equation}\label{section6}

The goal of this section is to establish a result of orbital stability, based on the Lyapunov functional  arguments contained in \cite{NP2015} (see also \cite{stuart}), for the periodic standing waves solutions for the NLS equation of the form \eqref{PW2},
where $\omega \in (0,\omega^*)$ is given in Proposition \ref{propn1} and $\varphi:=\varphi_{\omega}$ given by Proposition \ref{cnoidalsol2}.

Equation \eqref{KF1} has two basic symmetries, namely, translation and rotation. In other words, if $u = u(x,t)$ is a solution of \eqref{KF1}, so are $e^{-i\theta}u$ and $u(x-r,t)$ for any  $\theta, r \in \mathbb{R}$. By writing $U=P+iQ\equiv(P,Q)$ this is equivalent to saying that \eqref{KF1} is invariant by the transformations
\begin{equation*}
	T_1(\theta)U := \left(
	\begin{array}{cc}
		\cos{\theta} & \sin{\theta} \\
		- \sin{\theta} & \cos{\theta}
	\end{array}
	\right) \left(
	\begin{array}{c}
		P \\ Q
	\end{array}
	\right)
\end{equation*} 
and
\begin{equation*}
	T_2(r)U := \left(
	\begin{array}{c}
		P(\cdot - r, \cdot) \\
		Q(\cdot - r, \cdot)
	\end{array}
	\right).
\end{equation*}
The actions $T_1$ and $T_2$ define unitary groups in $\mathbb{H}^1_{per,m}$ with infinitesimal generators given by
\begin{equation*}
	T_1'(0)U := \left(
	\begin{array}{cc}
		0 & 1 \\
		-1 & 0
	\end{array}
	\right) \left(
	\begin{array}{c}
		P \\
		Q
	\end{array}
	\right) = J \left(
	\begin{array}{c}
		P \\
		Q
	\end{array}
	\right)
\end{equation*}
and
\begin{equation*}
	T_2'(0)U := -\partial_x \left(
	\begin{array}{c}
		P \\
		Q
	\end{array}
	\right).
\end{equation*}

A standing wave solution  as in (\ref{PW2}) is then of the form
\begin{equation*}
	U(x,t) = \left(
	\begin{array}{c}
		\varphi(x) \cos(\omega t) \\
		\varphi(x) \sin(\omega t)
	\end{array}
	\right).
\end{equation*}  
Taking into account that the NLS equation is invariant by the actions of  $T_1$ and $T_2$, we define the orbit generated by 
$
\Phi = (\varphi,0)
$
as
\begin{equation*}
	\Omega_\Phi = \big\{T_1(\theta)T_2(r)\Phi; \theta, r \in \R \big\} \equiv \left\{ \left(
	\begin{array}{cc}
		\cos{\theta} & \sin{\theta} \\
		-\sin{\theta} & \cos{\theta}
	\end{array}
\right) \left(
	\begin{array}{c}
	\varphi(\cdot - r) \\
	0
	\end{array}
\right) \; ; \; \theta, r \in \R \right\}.
\end{equation*}

In $\mathbb{H}^1_{per,m}$, we introduce the pseudometric $d$ by
\begin{equation*}
	d(f,g):= \inf \{ \|f - T_1(\theta)T_2(r)g\|_{\mathbb{H}^1_{per}}\; ; \; \, \theta, r \in \R\}.
\end{equation*}
In particular, the distance between $f$ and $g$ is the distance between $f$ and the orbit generated by $g$ under the action of rotation and translation. In particular, 
$
	d(f,\Phi) = d(f,\Omega_\Phi).
$

We present now our notion of orbital stability.

\begin{definition}\label{defistabilitystanding}
	Let $\Theta(x,t) = (\varphi(x)\cos(\omega t), \varphi(x) \sin(\omega t))$ be a standing wave for \eqref{hamiltonian-schrodinger}, where $\omega \in (0,+\infty)$. We say that $\Theta$ is orbitally stable in $\mathbb{H}^1_{per,m}$ provided that, given $\varepsilon > 0$, there exists $\delta > 0$ with the following property: if $U_0 \in \mathbb{H}^1_{per,m}$ satisfies $\|U_0 - \Phi\|_{\mathbb{H}^1_{per}} < \delta$, then the weak solution $U(t)$ for \eqref{hamiltonian-schrodinger} with initial data $U_0$ exists for all $t \geq 0$ in $\mathbb{H}^1_{per,m}$ and satisfies
	\begin{equation*}
		d(U(t), \Omega_\Phi) < \varepsilon, \; \text{for all} \; t \geq 0.
	\end{equation*}

Otherwise, we say that $\Theta$ is orbitally unstable in $\mathbb{H}^1_{per,m}$.
\end{definition}

\subsection{Global well-posedness}

The first step to obtain the orbital stability  is to show the global existence of weak solutions  in $H_{per,m}^1$, which we give next.

\begin{theorem}\label{wellposednessSchrodinger}
	The initial-value problem associated with \eqref{KF1} is globally well-posed in $H^1_{per,m}$. More precisely, for any $u_0 \in H^1_{per,m}$ and any $T>0$, there exists a unique global (weak) solution of \eqref{KF1} in the sense that
	\begin{equation}\label{cond1}
		u \in L^\infty(0,T; H^1_{per,m}), \, \, u_t \in L^\infty(0,T;H^{-1}_{per,m}),
	\end{equation}
and $u$ satisfies, 
\begin{equation}\label{weakeq}
	i\langle u_t(\cdot,t), \phi\rangle_{H^{-1}_{per,m},H_{per,m}^1} - \int_{0}^{L} u_x(\cdot,t)\overline{\phi_x} \;dx + \int_{0}^{L} u(\cdot,t)|u(\cdot,t)|^2\overline{\phi}\;dx = 0, \, \mbox{ a.e. } t \in [0,T],
\end{equation}
for all $\phi \in H^1_{per,m}$. Furthermore, $u$  satisfies $u(\cdot,0) = u_0$ and there exist constants $c_1, c_2 >0$ such that
\begin{equation}\label{conserv}
	\mathcal{E}(u(\cdot,t)) \equiv c_1 \, \mbox{ and } \, \mathcal{F}(u(\cdot,t)) \equiv c_2, \, \, \mbox{ a.e. } t \in [0,T],
\end{equation}
where $\mathcal{E}$ and $\mathcal{F}$ are defined in \eqref{quantEF1} and \eqref{quantEF2}. Equalities in $(\ref{conserv})$ allow to conclude from $(\ref{cond1})$ that 
\begin{equation}\label{cond12}
	u \in C([0,T]; H^1_{per,m}), \, \, u_t \in C([0,T];H^{-1}_{per,m}).
\end{equation}

\end{theorem}
\begin{proof}
To prove this result, we use Galerkin's method. Since the method is quite well-known we give only the main steps  (for some additional details, see the Appendix). Let $\{w_j\}_{j \in \mathbb{Z}\backslash\{0\}}$ be the sequence given by
\begin{equation*}
w_j(x)=\frac{1}{\sqrt{L}}e^{\frac{2\pi i}{L}jx},
\end{equation*}
which is a complete orthonormal set in $L^2_{per,m}$ and orthogonal in $H^1_{per,m}$.

Let $T>0$ be arbitrary but fixed. For each positive integer $n$, let $V_n$ be the finite subspace generated by $w_{-n},w_{-n+1},\ldots,w_n$. By using the Carathéodory Theorem (\cite[Chapter 2, Theorem 1.1]{CoddingtonLevinson} and \cite[Chapter 1]{filippov}), we can show the existence of functions $g_{jn}$ such that the function
\begin{equation*}
	u_n(t) = \sum_{{{j=-n}}\atop{j\neq0}}^n g_{jn}(t) w_j\in V_n,
\end{equation*}
satisfies, for each $j \in \{-n, \ldots,-1,1,\ldots n\}$, the following approximate problem
\begin{equation}\label{approblem}
	\left\{
	\begin{array}{llll}
		\displaystyle i\langle u_{n,t}(t), w_j\rangle_{H_{per,m}^{-1}, H_{per,m}^1} - \int_0^Lu_{n,x}(\cdot,t)\overline{w_{j,x}}dx + \int_0^Lu_n(\cdot,t)|u_n(\cdot, t)|^2 \overline{w_j}dx = 0,  \\ \\
		u_n(0) = u_{0n} = \displaystyle{ \sum_{{{j=-n}}\atop{j\neq0}}^n (u_{0},w_j)_{{H}^1_{per}} w_j} \longrightarrow u_0 \mbox{ in } H^1_{per,m}, \; \text{as} \; n \to \infty
		\end{array}
	\right.
\end{equation}
for all $t\in [0,T]$.  Note that a priori estimates allow us to define functions $u_{n}(t)$ in the interval $[0,T]$. Since $w_j \in H^1_{per,m}$ for all $j\in \mathbb{Z}\backslash\{0\}$ and $H^1_{per,m}$ is a closed subspace of $H^1_{per}$, we obtain all standard steps concerning Galerkin's method and the proof of the theorem follows.
\end{proof}

\subsection{Positivity of the operator $\boldsymbol{\mathcal{G}''(\Phi)}$} 

In this section our goal is to study the positivity of the operator $\mathcal{G}''(\Phi)$ given in \eqref{matrixop2}. Before that, we need to prove some positivity properties for the operators $\mathcal{L}_2$ and $\mathcal{L}_3$ in \eqref{L2L3}.

\begin{lemma}\label{lema-L3}
	There exists $\delta_2 > 0$ such that
	\begin{equation}\label{delta2}
		(\mathcal{L}_{3\Pi} \, v, v)_{L^2_{per}} \geq \delta_2 \|v\|_{L^2_{per}}^2,
	\end{equation}
for all $v \in H^2_{per,m}$ satisfying $(v,\varphi)_{L^2_{per}} = 0$.
\end{lemma}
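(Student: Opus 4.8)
The plan is to read the inequality off the spectral description of $\mathcal{L}_{3\Pi}$ already furnished by Proposition \ref{propn0}, via a standard spectral decomposition. First I would record the structural facts: $\mathcal{L}_{3\Pi}$ is a self-adjoint operator on $L^2_{per,m}$ with compact resolvent (being a bounded, symmetric, rank-one perturbation of the Sturm--Liouville operator $\mathcal{L}_3=-\partial_x^2+\omega-\varphi^2$ restricted to the zero mean space), so its spectrum consists of a discrete sequence of eigenvalues accumulating only at $+\infty$. It is also worth noting in passing that on $L^2_{per,m}$ the rank-one correction in \eqref{opconstrained1234} is inert on the quadratic form: for $v\in H^2_{per,m}$ one has $\int_0^L v\,dx=0$, hence
\[
\frac{1}{L}\left(\int_0^L\varphi^2 v\,dx\right)\left(\int_0^L \overline{v}\,dx\right)=0,
\]
so that $(\mathcal{L}_{3\Pi}v,v)_{L^2}=(\mathcal{L}_3 v,v)_{L^2}$; this shows the bound may equivalently be phrased for $\mathcal{L}_3$, but I will argue directly with $\mathcal{L}_{3\Pi}$.

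Next I would order the eigenvalues of $\mathcal{L}_{3\Pi}$ as $\mu_1\le\mu_2\le\mu_3\le\cdots\to+\infty$. Proposition \ref{propn0} states that $\mathcal{L}_{3\Pi}$ has no negative eigenvalue and that $0$ is a simple eigenvalue with eigenfunction $\varphi$; thus $\mu_1=0$ is simple with eigenfunction $\varphi$, and every remaining eigenvalue is strictly positive. The discreteness of the spectrum guarantees that the positive eigenvalues cannot accumulate at $0$, so the second eigenvalue $\mu_2$ is strictly positive. I would then set $\delta_2:=\mu_2>0$.

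Finally, let $\{\psi_n\}_{n\ge1}$ be an orthonormal basis of $L^2_{per,m}$ consisting of eigenfunctions of $\mathcal{L}_{3\Pi}$, with $\psi_1$ spanning $\Ker(\mathcal{L}_{3\Pi})=[\varphi]$. Given $v\in H^2_{per,m}$ with $(v,\varphi)_{L^2}=0$, the expansion $v=\sum_{n\ge1}a_n\psi_n$ has vanishing first coefficient, $a_1=0$, because $v\perp\varphi$. Therefore
\[
(\mathcal{L}_{3\Pi}v,v)_{L^2}=\sum_{n\ge2}\mu_n|a_n|^2\ge\mu_2\sum_{n\ge2}|a_n|^2=\delta_2\|v\|_{L^2}^2,
\]
which is the claimed coercivity. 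I do not expect any genuine obstacle in this argument: the substantive work is entirely contained in Proposition \ref{propn0}, and the only point demanding care is the strict positivity $\delta_2=\mu_2>0$, which is secured by the discreteness of the spectrum of the Sturm--Liouville-type operator $\mathcal{L}_{3\Pi}$.
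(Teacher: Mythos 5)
Your proof is correct and follows essentially the same route as the paper: both extract the coercivity from Proposition \ref{propn0} together with the discreteness of the spectrum of the self-adjoint operator $\mathcal{L}_{3\Pi}$, restricted to the orthogonal complement of $\Ker(\mathcal{L}_{3\Pi})=[\varphi]$ in $L^2_{per,m}$. The only cosmetic difference is that you implement the spectral decomposition via an orthonormal eigenbasis expansion, whereas the paper invokes Kato's decomposition theorem for the splitting $L^2_{per,m}=[\varphi]\oplus[\varphi]^{\perp}$; your side remark that the rank-one correction in \eqref{opconstrained1234} is inert on the quadratic form over zero mean functions is correct as well.
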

\begin{proof}
	Since $L^2_{per,m}$ is a Hilbert space and $\varphi \in L^2_{per,m}$, we have the decomposition
$
		L^2_{per,m} = N \oplus M,
$
where $N:=[\varphi]$ and $M:= [\varphi]^\perp$. From Theorem 6.17 in \cite[page 178]{katobook}, we obtain   
\begin{equation*}
	\sigma\left(\mathcal{L}_{3\Pi}\right) = \sigma\left(\mathcal{L}_{3\Pi, N}\right) \cup \sigma\left(\mathcal{L}_{3\Pi,M}\right),
\end{equation*}
where $\mathcal{L}_{3\Pi, N}$ and $\mathcal{L}_{3\Pi, M}$ denote the operator $\mathcal{L}_{3 \Pi}$ restricted to $N$ and $M$, respectively, and $\sigma(\mathcal{A})$ denotes the spectrum of the linear operator $\mathcal{A}$.
Since $\text{Ker}(\mathcal{L}_{3 \Pi})=[\varphi]$, we obtain
\begin{equation*}
\sigma\left(\mathcal{L}_{3\Pi,M}\right) = \sigma\left(\mathcal{L}_{3\Pi}\right) \setminus \{0\},
\end{equation*}
that is, the spectrum of $\mathcal{L}_{3\Pi,M}$ is bounded from below. The arguments in \cite[page 278]{katobook}, imply that $\mathcal{L}_{3\Pi}$ is also bounded from below, so that there exists $\delta_2 > 0$ satisfying
\begin{equation*}
	(\mathcal{L}_{3\Pi}\, v, v)_{L^2_{per}} \geq \delta_2 \|v\|_{L^2_{per}}, \, \, \text{for all} \, v \in M \cap H^2_{per,m},
\end{equation*}
Therefore, for all $v \in H^2_{per,m}$ such that $(v, \varphi)_{L^2_{per}} = 0$ we have that \eqref{delta2} holds.
\end{proof}

Before proving the next result, we recall the convexity of the function $\mathsf{d}$ defined by $\mathsf{d}(\omega)=\mathcal{G}(\Phi)=\mathcal{E}(\Phi)+\omega\mathcal{F}(\Phi)$.

\begin{proposition}\label{convexityd2}
	Let $L>0$ be fixed. For the cnoidal wave $\varphi$ given by Proposition \ref{cnoidalsol2}, we have $\mathsf{d}''(\omega)>0$ for all $\omega \in (0+\infty)$.
\end{proposition}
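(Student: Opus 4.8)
The plan is to compute $\mathsf{d}''(\omega)$ explicitly and show it is positive. Since $\mathsf{d}(\omega)=\mathcal{E}(\Phi)+\omega\mathcal{F}(\Phi)$ and $\Phi=(\varphi,0)$ is a critical point of $\mathcal{G}=\mathcal{E}+\omega\mathcal{F}$, the standard Grillakis--Shatah--Strauss computation gives $\mathsf{d}'(\omega)=\mathcal{F}(\varphi)=\frac{1}{2}\int_0^L\varphi^2\,dx$, so that $\mathsf{d}''(\omega)=\frac{1}{2}\frac{d}{d\omega}\int_0^L\varphi^2\,dx$. Thus the whole task reduces to showing that the $L^2$-norm of the cnoidal profile is strictly increasing in $\omega$. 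First I would use the explicit expression $\varphi_\omega(x)=\frac{\sqrt{2\omega}\,k}{\sqrt{2k^2-1}}\cn\!\left(\frac{4\K(k)}{L}x,k\right)$ from Proposition \ref{cnoidalsol2} together with the standard integral $\int_0^{4\K}\cn^2(u,k)\,du=\frac{4}{k^2}\big(\E(k)-(1-k^2)\K(k)\big)$ (formula $(312.02)$ of \cite{byrd}) to obtain a closed form
\begin{equation*}
\int_0^L\varphi_\omega^2\,dx=\frac{2\omega}{2k^2-1}\cdot\frac{L}{4\K(k)}\cdot\frac{4}{k^2}\big(\E(k)-(1-k^2)\K(k)\big)\cdot k^2,
\end{equation*}
which simplifies to a multiple of $\frac{\omega L}{\K(k)(2k^2-1)}\big(\E(k)-(1-k^2)\K(k)\big)$.

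Next, since $L$ is fixed and $\omega$ depends on $k$ through \eqref{omegacn2}, namely $\omega=\frac{16\K(k)^2(2k^2-1)}{L^2}$, I would treat $k\in\left(\frac{1}{\sqrt{2}},1\right)$ as the free parameter and apply the chain rule: $\mathsf{d}''(\omega)=\frac{1}{2}\frac{d}{dk}\!\left(\int_0^L\varphi^2\,dx\right)\left(\frac{d\omega}{dk}\right)^{-1}$. The monotonicity of the map $k\mapsto\omega$ (established in Proposition \ref{cnoidalsol2}, or directly by differentiating \eqref{omegacn2}) guarantees that $\frac{d\omega}{dk}$ has a definite sign, so the sign of $\mathsf{d}''$ reduces to the sign of the derivative in $k$ of the explicit elliptic-integral expression above. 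Here I would substitute $\omega$ back in terms of $k$ so that $\int_0^L\varphi^2\,dx$ becomes a pure function of $k$ (the explicit $\omega$ cancels one factor of $\K(k)^2(2k^2-1)$), yielding an expression of the form $C\,L\,\K(k)\big(\E(k)-(1-k^2)\K(k)\big)$ for a positive constant $C$, and then differentiate using the classical derivative formulas $\frac{d\K}{dk}=\frac{\E-(1-k^2)\K}{k(1-k^2)}$ and $\frac{d\E}{dk}=\frac{\E-\K}{k}$.

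The main obstacle will be verifying that the resulting combination of $\K(k)$, $\E(k)$ and their products is strictly positive for all $k\in\left(\frac{1}{\sqrt{2}},1\right)$, since after differentiation one obtains a sum of elliptic-integral terms whose sign is not manifest. I expect this to require either known monotonicity inequalities for $\E$ and $\K$ (for instance that $\E(k)-(1-k^2)\K(k)>0$ and that $\K-\E>0$ on $(0,1)$) or a reduction to a single-variable inequality that can be checked by a qualitative/graphical argument of the same style already used for $\mathsf{d}''(c)$ in Section \ref{section5}. As the excerpt itself notes that $\mathsf{d}''(\omega)>0$ is established in \cite[page 23]{angulo}, I would cite that reference to close the positivity step once the closed form is in place, but the self-contained route is the elliptic-integral sign analysis just described. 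I would finally record that $\frac{d\omega}{dk}>0$ and the $k$-derivative of $\int_0^L\varphi^2\,dx$ share the same sign, concluding $\mathsf{d}''(\omega)>0$ on $(0,+\infty)$.
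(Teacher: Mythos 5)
Your strategy is sound, and it is worth noting that the paper itself offers no argument here: its entire proof of Proposition \ref{convexityd2} is the citation ``See \cite[page 23]{angulo}''. Your self-contained elliptic-integral route is therefore not so much a different method as a reconstruction of the content of that reference, which is valuable since it makes the paper's stability analysis independent of an external computation. Two remarks on the details. First, a harmless algebraic slip: after substituting $\omega=16\K(k)^2(2k^2-1)/L^2$ into your closed form one gets $\int_0^L\varphi^2\,dx=\frac{32}{L}\,\K(k)\bigl(\E(k)-(1-k^2)\K(k)\bigr)$, so the prefactor is $32/L$, not proportional to $L$; since $L>0$ is fixed this does not affect the sign analysis. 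Second, the one step you leave hedged (``known monotonicity inequalities or a qualitative/graphical argument'') can be closed cleanly and needs no numerics: set $g(k)=\K(k)\E(k)-(1-k^2)\K(k)^2$ and use $\frac{d\K}{dk}=\frac{\E-(1-k^2)\K}{k(1-k^2)}$ and $\frac{d\E}{dk}=\frac{\E-\K}{k}$ to compute
\begin{equation*}
g'(k)=\frac{\E(k)^2-2(1-k^2)\K(k)\E(k)+(1-k^2)\K(k)^2}{k(1-k^2)};
\end{equation*}
the numerator, viewed as a quadratic in $\E(k)$, has positive leading coefficient and discriminant $4(1-k^2)^2\K^2-4(1-k^2)\K^2=-4k^2(1-k^2)\K^2<0$, hence is strictly positive for every $k\in(0,1)$. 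Combined with $\frac{d\omega}{dk}=\frac{16}{L^2}\bigl(2(2k^2-1)\K\frac{d\K}{dk}+4k\K^2\bigr)>0$ on $\left(\frac{1}{\sqrt{2}},1\right)$, your chain-rule identity $\mathsf{d}''(\omega)=\frac{1}{2}\frac{d}{dk}\bigl(\int_0^L\varphi^2\,dx\bigr)\bigl(\frac{d\omega}{dk}\bigr)^{-1}$ yields $\mathsf{d}''(\omega)>0$ for all $\omega\in(0,+\infty)$, so your outline completes to a full proof without any appeal to \cite{angulo}.
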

\begin{proof}  See \cite[page 23]{angulo}.
\end{proof}
\indent Next result gives us that the quadratic form associated to the operator $\mathcal{L}_{2\Pi}$ is non-negative when restricted to $[\varphi]^\perp$.
\begin{lemma}\label{lema-gamma-zero}
	Let $\gamma$ be defined as
	\begin{equation}\label{gamma}
		\gamma := \inf \left\{ (\mathcal{L}_{2\Pi}\, v,v)_{L^2_{per}} : v \in H^2_{per,m}, \, (v, \varphi)_{L^2_{per}} = 0, \, \|v\|_{L^2_{per}} = 1 \right\}.
	\end{equation}
Then $\gamma = 0$.
\end{lemma}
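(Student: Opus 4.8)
The plan is to prove $\gamma=0$ by establishing the two inequalities $\gamma\le 0$ and $\gamma\ge 0$ separately. For the upper bound the natural test function is $\varphi'$. Since $\varphi$ is even and $L$-periodic, $\varphi'$ is odd with $\int_0^L\varphi'\,dx=0$ and $(\varphi',\varphi)_{L^2}=\frac{1}{2}\int_0^L(\varphi^2)'\,dx=0$; hence $\varphi'/\|\varphi'\|_{L^2}$ belongs to the admissible set in \eqref{gamma}. Because $\varphi'\in\ker(\mathcal{L}_{2\Pi})$ by Proposition \ref{propn1}, we get $(\mathcal{L}_{2\Pi}\varphi',\varphi')_{L^2}=0$, and therefore $\gamma\le 0$. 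This simultaneously shows that, once the lower bound is in hand, the infimum is actually attained at $\varphi'$.

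For the lower bound I would show that the quadratic form associated with $\mathcal{L}_{2\Pi}$ is non-negative on the subspace $\{v\in L^2_{per,m}:(v,\varphi)_{L^2}=0\}$. This is a second application of the Index Theorem already used in Section \ref{section4}, now taking $\mathcal{L}_{2\Pi}$ (instead of $\mathcal{L}_2$) as the base operator and the single constraint direction $\varphi$ (instead of the constant function $1$). By Proposition \ref{propn1}(i), for $\omega\in(0,\omega^*)$ we have $\text{n}(\mathcal{L}_{2\Pi})=1$ and $\ker(\mathcal{L}_{2\Pi})=[\varphi']$; since $\varphi\perp\varphi'$, the vector $\varphi$ lies in the range of the self-adjoint operator $\mathcal{L}_{2\Pi}$ and the relevant scalar is $\widetilde{D}:=(\mathcal{L}_{2\Pi}^{-1}\varphi,\varphi)_{L^2}$. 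The Index formulas \eqref{indexformula1234} then read
\begin{equation*}
\text{n}\big(\mathcal{L}_{2\Pi}|_{[\varphi]^\perp}\big)=\text{n}(\mathcal{L}_{2\Pi})-n_0-z_0=1-n_0-z_0,
\end{equation*}
so it suffices to prove $\widetilde{D}<0$, which forces $n_0=1$, $z_0=0$, and hence $\text{n}(\mathcal{L}_{2\Pi}|_{[\varphi]^\perp})=0$. Non-negativity of the form on the constraint set follows, giving $\gamma\ge 0$, and combined with the previous paragraph we conclude $\gamma=0$.

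The heart of the proof is thus the sign of $\widetilde{D}$, and here I would exploit the convexity of $\mathsf{d}$ from Proposition \ref{convexityd2}. Differentiating the profile equation \eqref{ode2} in $\omega$ gives $\mathcal{L}_2(\partial_\omega\varphi)=-\varphi$; since $\varphi$ is always zero-mean, $\partial_\omega\varphi$ is zero-mean as well, and being even it is orthogonal to $\varphi'$, so applying the mean-removing projection $\Pi_0$ (which satisfies $\mathcal{L}_{2\Pi}=\Pi_0\mathcal{L}_2|_{L^2_{per,m}}$) yields $\mathcal{L}_{2\Pi}^{-1}\varphi=-\partial_\omega\varphi$. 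Using $\mathsf{d}'(\omega)=\mathcal{F}(\varphi)=\frac{1}{2}\|\varphi\|_{L^2}^2$ we then obtain
\begin{equation*}
\widetilde{D}=(\mathcal{L}_{2\Pi}^{-1}\varphi,\varphi)_{L^2}=-(\partial_\omega\varphi,\varphi)_{L^2}=-\mathsf{d}''(\omega)<0,
\end{equation*}
the last inequality being exactly Proposition \ref{convexityd2}. I expect the only delicate points to be bookkeeping ones: verifying that $\partial_\omega\varphi$ is the correct representative of $\mathcal{L}_{2\Pi}^{-1}\varphi$, namely that it is zero-mean and orthogonal to the kernel $[\varphi']$, and checking that the Index Theorem applies verbatim with $\varphi$ as the constraint vector despite $\mathcal{L}_{2\Pi}$ having a nontrivial kernel. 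Both reduce to the single orthogonality $\varphi\perp\varphi'$ together with the identity $\mathcal{L}_{2\Pi}=\Pi_0\mathcal{L}_2|_{L^2_{per,m}}$, so no genuinely new computation is required beyond the convexity already established.
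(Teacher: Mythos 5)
Your proposal is correct, but it reaches the lower bound $\gamma\ge 0$ by a genuinely different route from the paper. The paper argues variationally: it first relaxes to the $H^1$-infimum $\gamma_0$, proves attainment via a minimizing sequence (boundedness in $H^1_{per,m}$, weak compactness, the compact embedding into $L^2_{per,m}$, Fatou), then proves non-negativity by decomposing $L^2_{per,m}=[\Psi]\oplus[\varphi']\oplus P$ with $\Psi$ the negative eigenfunction of $\mathcal{L}_{2\Pi}$, combining $\langle\mathcal{L}_{2\Pi}\chi,\chi\rangle=-\mathsf{d}''(\omega)<0$ for $\chi=-\partial_\omega\varphi$ with a Cauchy--Schwarz inequality for the non-negative form on $P$, and finally identifies $\gamma=\gamma_0$ through a Lagrange-multiplier regularity step. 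You instead apply the Index Theorem a second time, now with $\mathcal{L}_{2\Pi}$ as base operator and single constraint $\varphi\in\ker(\mathcal{L}_{2\Pi})^{\perp}=[\varphi']^{\perp}$, reducing everything to the sign of $\widetilde{D}=(\mathcal{L}_{2\Pi}^{-1}\varphi,\varphi)_{L^2}=-(\partial_\omega\varphi,\varphi)_{L^2}=-\mathsf{d}''(\omega)<0$; note that both proofs ultimately rest on the same two seeds, namely $\mathcal{L}_2(\partial_\omega\varphi)=-\varphi$ and Proposition \ref{convexityd2}. Your bookkeeping is sound: the identity $\mathcal{L}_{2\Pi}=\Pi_0\mathcal{L}_2$ on $L^2_{per,m}$ holds because $\int_0^L\mathcal{L}_2f\,dx=-3\int_0^L\varphi^2f\,dx$; $\partial_\omega\varphi$ is zero-mean since the smooth curve of Proposition \ref{cnoidalsol2} stays in the zero-mean class; $\widetilde{D}$ is independent of the kernel representative precisely because $\varphi\perp\varphi'$; and the Index Theorem of \cite{kapitula} tolerates the nontrivial kernel under exactly that orthogonality, just as in the paper's own use for $\mathcal{L}_{1\Pi}$. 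What each approach buys: yours is shorter and recycles the machinery already deployed in Section \ref{section4}, and since $\text{n}\big(\mathcal{L}_{2\Pi}|_{[\varphi]^\perp}\big)=0$ for an operator with discrete spectrum, non-negativity holds on the whole form domain, so the $H^2$-versus-$H^1$ subtlety evaporates (with $z_0=0$ the kernel of the constrained operator is still $[\varphi']$, so your $\varphi'$ test function even shows the infimum is attained); the paper's hands-on argument is more self-contained and, importantly, its compactness/attainment machinery is reused in Lemma \ref{lema-xi-positivo}, whose proof invokes ``similar arguments'' to produce a minimizer --- under your route that variational work would still have to be supplied there. One minor caveat: your conclusion, like the paper's, implicitly requires $\omega\in(0,\omega^*)$ so that $\text{n}(\mathcal{L}_{2\Pi})=1$ and $\ker(\mathcal{L}_{2\Pi})=[\varphi']$ (Proposition \ref{propn1}(i)), consistent with the standing assumption of Section \ref{section6}; it would be worth stating this explicitly.
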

\begin{proof}
Let us define
	\begin{equation}\label{gamma0}
	\gamma_0 := \inf \left\{ \langle \mathcal{L}_{2\Pi} v,v\rangle : v \in H^1_{per,m}, \, (v, \varphi)_{L^2_{per}} = 0, \, \|v\|_{L^2_{per}} = 1 \right\},
	\end{equation}
where $\langle \cdot, \cdot \rangle := \langle \cdot, \cdot \rangle_{H^{-1}_{per,m},H^1_{per,m}}$.
Let us first prove that $\gamma_0 = 0$ and the infimum in \eqref{gamma0} is achieved. In fact, since $\varphi$ is periodic, we have
\begin{align*}
	\langle \mathcal{L}_{2\Pi} v, v \rangle & = \|v'\|_{L^2_{per}} + \omega - 3 (\varphi^2, v^2)_{L^2_{per}} \geq -3 \, (\varphi^2, v^2)_{L^2_{per}}  \geq -3 \, \|\varphi^2\|_{L^{\infty}_{per}},
\end{align*}
for all $v \in H^1_{per,m}$ such that $\|v\|_{L^2_{per}} = 1$. This implies that $\gamma_0$ is finite. Furthermore, since
$(\varphi, \varphi')_{L^2_{per}} = 0$ and  $\mathcal{L}_{2\Pi}\varphi' = 0$
we get $\gamma_0 \leq 0.$ 

Now, we prove that the infimum in \eqref{gamma0} is achieved. Indeed, let 
$
\{u_j\}_{j \in \mathbb{N}} \subset H^1_{per,m}
$
be a sequence such that
\begin{equation}\label{17021}
 \|u_j\|_{L^2_{per}} = 1, \, \, \, (u_j, \varphi)_{L^2_{per}} = 0, \;\text{for all} \; j \in \mathbb{N}
\, \mbox{ and } \, 	\langle\mathcal{L}_{2\Pi} u_j, u_j\rangle \longrightarrow \gamma_0, \; \text{as} \; j \rightarrow \infty.
\end{equation}
Since the sequence $\left\{ \langle\mathcal{L}_{2\Pi} u_j, u_j\rangle \right\}_{j \in \mathbb{N}}$ is bounded in $\mathbb{R}$, there exists $C>0$ so that
\begin{equation}\label{est1}
	\langle\mathcal{L}_{2\Pi} u_j, u_j\rangle \leq C \; \text{for all} \; j \in \mathbb{N}.
\end{equation}
Now, since $\varphi$ is bounded we obtain from $(\ref{est1})$ the existence of $C_1>0$ such that
\begin{equation*}
	\omega\|u_j\|_{L^2_{per}}^2 + \|u_j'\|_{L^2_{per}}^2 = \langle\mathcal{L}_{2\Pi} u_j, u_j\rangle + 3(\varphi^2,u_j^2)_{L^2_{per}} \leq C_1, \; \text{for all} \; j \in \mathbb{N},
\end{equation*}
which gives that $\{u_j\}_{j \in \mathbb{N}}$ is bounded in $H^1_{per,m}$. Hence, there exists a subsequence, still denoted by $\{u_j\}_{j \in \mathbb{N}}$ and $u \in H^1_{per,m}$ such that
$	u_j \rightharpoonup u \, \mbox{ in } \, H^1_{per,m}.$ By \eqref{17021} and the compact embedding $H^1_{per,m} \hookrightarrow L^2_{per,m}$, we have
$
	\|u\|_{L^2_{per}} = 1 \, \mbox{ and } \, (u,\varphi)_{L^2_{per}} = 0.
$
In addition,  Fatou's Lemma yields
\begin{equation*}
	\gamma_0 \leq \langle \mathcal{L}_{2\Pi} \, u, u\rangle \leq \liminf_{j\in \mathbb{N}} \langle \mathcal{L}_{2\Pi}\,  u_j, u_j\rangle = \gamma_0.
\end{equation*}
Last inequality allows to deduce that the infimum  in (\ref{gamma0}) is achieved at the function $u$.

It remains to prove that $\gamma_0 \geq 0$. Let
\begin{equation*}
	\Upsilon_0 := \left\{ v \in H^1_{per,m}, \, (v, \varphi)_{L^2_{per}} = 0, \, \|v\|_{L^2_{per}} = 1 \right\}.
\end{equation*}
It is then suffices to show  
\begin{equation*}
\langle \mathcal{L}_{2\Pi}v,v\rangle \geq 0, \; \text{for all} \; v \in \Upsilon_0.
\end{equation*}
For this, let us start by noting that if $\chi = -\dfrac{\partial \varphi}{\partial \omega}$ then $\mathcal{L}_{2}\chi = \varphi$, and
\begin{equation}\label{3a}
\langle \mathcal{L}_{2\Pi}\chi,\chi \rangle  = \langle \mathcal{L}_{2}\chi,\chi \rangle   = \left\langle \mathcal{L}_{2}\dfrac{\partial \varphi}{\partial \omega},\dfrac{\partial \varphi}{\partial \omega} \right\rangle = \left( \mathcal{L}_2 \dfrac{\partial \varphi}{\partial \omega}, \dfrac{\partial \varphi}{\partial \omega} \right)_{L^2_{per}} = - \left( \varphi, \dfrac{\partial \varphi}{\partial \omega} \right)_{L^2_{per}} = -\mathsf{d}''(\omega).
\end{equation}
By Proposition \ref{convexityd2}, we have
\begin{equation}\label{17-02-1}
	\langle \mathcal{L}_{2\Pi} \chi, \chi \rangle < 0.
\end{equation}

Now, recall that $\text{Ker}(\mathcal{L}_{2\Pi}) = [\varphi']$ and $\text{n}(\mathcal{L}_{2\Pi}) = 1.$ Then,  there exists $-\lambda^2 < 0$ and $\Psi \in D(\mathcal{L}_{2\Pi})$ such that $\|\Psi\|_{L^2_{per}} = 1$ and $\mathcal{L}_{2\Pi} \Psi = -\lambda^2 \Psi.$ In addition we may write
\begin{equation*}
	L^2_{per,m} = [\Psi] \oplus [\varphi'] \oplus P.
\end{equation*}
where $P \subset L^2_{per,m}$ is such that 
\begin{equation*}
	\langle \mathcal{L}_{2\Pi} u, u\rangle \geq \delta \|u\|_{L^2_{per}}^2,\; \text{for all} \; u  \in \tilde{P}:= D(\mathcal{L}_{2\Pi}) \cap P.
\end{equation*}
Next, since $\chi\in H_{per,m}^1$, we infer
\begin{equation*}
	\chi = a_0 \Psi + b_0 \varphi' + p_0,
\end{equation*}
where $a_0, b_0 \in \mathbb{R}$ and $p_0 \in \tilde{P}$. By \eqref{17-02-1} and the fact that $\Psi$ is orthogonal to the elements in the positive subspace $\tilde{P}$, it follows that
\begin{equation}\label{17-02-4}\begin{array}{llll}
 0 &>& \langle \mathcal{L}_{2\Pi} ( a_0 \Psi + b_0 \varphi'+p_0), a_0 \Psi + b_0 \varphi'+p_0\rangle \\\\
	 &=&a_0^2\langle\mathcal{L}_{2\Pi}\Psi,\Psi  \rangle+2a_0\langle\mathcal{L}_{2\Pi}\Psi,p_0\rangle +\langle\mathcal{L}_{2\Pi}p_0,p_0\rangle\\\\
	 &=&a_0^2\langle\mathcal{L}_{2\Pi}\Psi,\Psi  \rangle-2a_0\lambda^2\langle\Psi,p_0\rangle +\langle\mathcal{L}_{2\Pi}p_0,p_0\rangle\\\\
	 &=&- a_0^2\lambda^2+\langle \mathcal{L}_{2\Pi} p_0, p_0 \rangle.
\end{array}\end{equation}

Let  $v \in \Upsilon_0$ be arbitrary. First, if $v = \beta \varphi'$, for some $\beta\neq0$ then clearly
\begin{equation}\label{17-02-5}
	\langle \mathcal{L}_{2\Pi} \tilde{v}, \tilde{v} \rangle = 0 ,\qquad \tilde{v} = \frac{v}{\|v\|_{L^2_{per}}}\in \Upsilon_0.
	\end{equation}
On the other hand, if $v \notin [\varphi']$, there exist $a_1, b_1 \in \R$ and $p_1 \in P$ such that
\begin{equation*}
	v = a_1 \Psi + b_1 \varphi' + p_1.
\end{equation*}
Thus,
\begin{equation}\label{17-02-6}\begin{array}{llll}
0 = -(\varphi, v)_{L^2_{per}} & = \left( \mathcal{L}_{2\Pi}\chi, v\right)_{L^2_{per}} \\
& = \langle \mathcal{L}_{2\Pi}\chi, v \rangle \\
& = \langle -a_0 \lambda^2 \Psi + \mathcal{L}_{2\Pi}p_0, a_1 \Psi + b_1 \varphi' + p_1 \rangle \\
& = -a_0 a_1 \lambda^2 + \langle \mathcal{L}_{2\Pi}p_0, p_1 \rangle.
\end{array}\end{equation}
Using the fact that $\xi(f,g) = \langle \mathcal{L}_{2\Pi}f, g \rangle$ defines a non-negative sesquilinear form on $\tilde{P}$, we obtain the Cauchy-Schwarz inequality:
\begin{equation}\label{17-02-7}
	|\xi (f,g)|^2 \leq \langle \mathcal{L}_{2\Pi}f,f\rangle \, \langle \mathcal{L}_{2\Pi}g,g\rangle, \; \text{for all} \; f, g \in \tilde{P}.
\end{equation}

Next, we see that
\begin{equation*}
\langle \mathcal{L}_{2\Pi} v, v\rangle = \langle \mathcal{L}_{2\Pi}(a_1 \Psi + b_1 \varphi' + p_1), a_1 \Psi + b_1\varphi' + p_1 \rangle = -a_1^2\lambda^2 + \langle \mathcal{L}_{2\Pi}p_1, p_1\rangle.
\end{equation*}
Additionally, using \eqref{17-02-7}, we obtain
\begin{equation*}
\langle \mathcal{L}_{2\Pi} v, v\rangle \geq -a_1^2 \lambda^2 + \dfrac{|\langle \mathcal{L}_{2\Pi}p_0, p_1\rangle|^2}{\langle \mathcal{L}_{2\Pi}p_0, p_0\rangle}.
\end{equation*}
By \eqref{17-02-4} and the equality given by \eqref{17-02-6}, we conclude that
\begin{equation*}
\langle \mathcal{L}_{2\Pi} v, v\rangle > -a_1^2 \lambda^2 + \dfrac{a_0^2 a_1^2 \lambda^4}{a_0^2 \lambda^2} = 0,
\end{equation*}
that is,
\begin{equation}\label{17-02-8}
\langle \mathcal{L}_{2\Pi} v, v\rangle > 0,\; \text{for all} \; v \in \Upsilon_0\ \mbox{with}\ v \notin [\varphi'].
\end{equation}
By \eqref{17-02-5} and \eqref{17-02-8} we obtain $\gamma_0 \geq 0$.

In order to complete the proof of the lemma it suffices to show that $\gamma = \gamma_0$. To this end, take $u \in \Upsilon_0$ satisfying
$
	\langle \mathcal{L}_{2\Pi} u,u \rangle = \gamma_0.
$
Since the minimum of the constrained functional $\langle\mathcal{L}_{2\Pi}u,u\rangle$ over $\Upsilon_0$ is attained, we obtain, by the Lagrange multiplier theorem, that there exist $\eta, \zeta \in \R$ such that
\begin{equation}\label{est2}
	\mathcal{L}_{2\Pi}u = \eta u + \zeta \varphi.
\end{equation}
We see by \eqref{est2} that $u \in H^2_{per,m}$, which allows to conclude that $\gamma = \gamma_0 = 0$ and the proof of the lemma is completed.
\end{proof}

\begin{lemma}\label{lema-xi-positivo}
	Let $\xi$ be defined as
	\begin{equation}\label{xi}
		\xi := \inf \left\{ (\mathcal{L}_{2\Pi}v,v)_{L^2_{per}} : v \in H^2_{per,m}, \, (v, \varphi)_{L^2_{per}} = (v, \varphi')_{L^2_{per}} = 0, \, \|v\|_{L^2_{per}} = 1 \right\}.
	\end{equation}
	Then $\xi > 0$.
\end{lemma}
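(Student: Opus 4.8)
The plan is to deduce the strict inequality from the work already done in Lemma~\ref{lema-gamma-zero}. Since the feasible set defining $\xi$ in \eqref{xi} is contained in the one defining $\gamma$ in \eqref{gamma}, we immediately obtain $\xi \ge \gamma = 0$, so the entire content is to upgrade this to strict positivity. The decisive observation is that the extra constraint $(v,\varphi')_{L^2}=0$ excludes precisely the kernel direction $\varphi'$, which is (up to scaling) the unique minimizer of the one-constraint problem and the only admissible direction along which the quadratic form vanished. By \eqref{17-02-8}, the form $\langle\mathcal{L}_{2\Pi}v,v\rangle$ is \emph{strictly} positive at every $v\in\Upsilon_0$ with $v\notin[\varphi']$, hence at every competitor for $\xi$; the only gap between this pointwise positivity and $\xi>0$ is the attainment of the infimum.

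To close that gap I would repeat the compactness argument of Lemma~\ref{lema-gamma-zero}. I introduce the $H^1$-relaxation
\[
\xi_0 := \inf\left\{ \langle \mathcal{L}_{2\Pi}v,v\rangle : v\in H^1_{per,m},\ (v,\varphi)_{L^2}=(v,\varphi')_{L^2}=0,\ \|v\|_{L^2}=1\right\},
\]
and note that on zero-mean functions the constrained term drops out, since $\langle 1,v\rangle=0$, giving $\langle\mathcal{L}_{2\Pi}v,v\rangle = \|v'\|_{L^2}^2+\omega\|v\|_{L^2}^2-3\int_0^L\varphi^2v^2\,dx$. For a minimizing sequence $\{u_j\}$, the boundedness of $\varphi$ forces $\{u_j\}$ to be bounded in $H^1_{per,m}$, so a subsequence converges weakly to some $v_0$; the compact embedding $H^1_{per,m}\hookrightarrow L^2_{per,m}$ preserves the two orthogonality relations and $\|v_0\|_{L^2}=1$, while weak lower semicontinuity of the norm terms together with the strong $L^2$-convergence of the potential term yields $\langle\mathcal{L}_{2\Pi}v_0,v_0\rangle\le\xi_0$. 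Hence the infimum $\xi_0$ is attained at $v_0$.

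Finally, the minimizer $v_0$ lies in the set $\Upsilon_0$ of Lemma~\ref{lema-gamma-zero}, and because $(v_0,\varphi')_{L^2}=0$ with $\|v_0\|_{L^2}=1$ it satisfies $v_0\notin[\varphi']$. Thus \eqref{17-02-8} applies directly and gives $\langle\mathcal{L}_{2\Pi}v_0,v_0\rangle>0$, that is $\xi_0>0$. A Lagrange-multiplier argument identical to the one at the end of Lemma~\ref{lema-gamma-zero} (producing $\eta,\zeta,\rho\in\R$ with $\mathcal{L}_{2\Pi}v_0=\eta v_0+\zeta\varphi+\rho\varphi'$) shows $v_0\in H^2_{per,m}$, so $v_0$ is admissible in \eqref{xi} and $\xi=\xi_0=\langle\mathcal{L}_{2\Pi}v_0,v_0\rangle>0$. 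The main obstacle is solely the attainment step, but since it is a verbatim repetition of the compactness already carried out for $\gamma_0$, the positivity then follows at once from the strict inequality \eqref{17-02-8}.
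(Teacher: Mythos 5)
Your proof is correct, and it diverges from the paper's at the decisive step. Both arguments share the same backbone: relax \eqref{xi} to an $H^1$-infimum, show a minimizing sequence is bounded in $H^1_{per,m}$ (the zero-mean constraint killing the nonlocal term, the boundedness of $\varphi$ controlling the potential), pass to a weak limit, and use the compact embedding $H^1_{per,m}\hookrightarrow L^2_{per,m}$ plus weak lower semicontinuity to attain the infimum with all constraints preserved. The paper then argues \emph{by contradiction}: assuming $\xi=0$, it writes the Euler--Lagrange equation $\mathcal{L}_{2\Pi}u=\lambda u+\theta\varphi+\nu\varphi'$, shows $\lambda=\nu=0$, and uses $\mathcal{L}_{2\Pi}\chi=\varphi$ with $\chi=-\partial\varphi/\partial\omega$, $\Ker(\mathcal{L}_{2\Pi})=[\varphi']$, and $(\chi,\varphi)_{L^2}=-\mathsf{d}''(\omega)<0$ to force $u=\sigma\varphi'$, contradicting $(u,\varphi')_{L^2}=0$, $\|u\|_{L^2}=1$. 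You instead note that the extra constraint $(v,\varphi')_{L^2}=0$ together with $\|v\|_{L^2}=1$ excludes the kernel direction, so the minimizer lies in $\Upsilon_0\setminus[\varphi']$ and the strict inequality \eqref{17-02-8}, already proved inside Lemma~\ref{lema-gamma-zero}, applies verbatim --- no new multiplier algebra is needed for positivity. Your route is shorter and even slightly over-engineered: the Lagrange-multiplier/regularity step is only used to get $\xi=\xi_0$, but $\xi\geq\xi_0$ is immediate (every competitor for $\xi$ is a competitor for $\xi_0$ with the same value of the form, since the pairing and the $L^2$ inner product coincide on $H^2_{per,m}$), so $\xi_0>0$ alone yields the lemma. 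The trade-off is that you lean on a display internal to the previous proof: \eqref{17-02-8} presupposes the decomposition $L^2_{per,m}=[\Psi]\oplus[\varphi']\oplus P$ and the spectral data $\text{n}(\mathcal{L}_{2\Pi})=1$, $\Ker(\mathcal{L}_{2\Pi})=[\varphi']$ of Proposition~\ref{propn1}, whereas the paper's contradiction argument invokes only the \emph{statement} of Lemma~\ref{lema-gamma-zero} ($\gamma=0$) and rederives positivity directly from $\mathsf{d}''(\omega)>0$; both versions, of course, are confined to the regime $\omega\in(0,\omega^*)$ where that spectral picture holds.
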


\begin{proof}
From Lemma \ref{lema-gamma-zero} it is clear that $\xi\geq0$. Assume by contradiction  that $\xi = 0$. Using similar arguments as in Lemma \ref{lema-gamma-zero} we may show that the infimum in \eqref{xi} is attained at a function $u \in H^2_{per,m}$ satisfying
\begin{equation}\label{lema3-1}
	\|u\|_{L^2_{per}} = 1, \, \, (u,\varphi)_{L^2_{per}} = (u,\varphi')_{L^2_{per}} = 0\ \mbox{and}\  (\mathcal{L}_{2\Pi}u,u)_{L^2_{per}} = \xi = 0.
	\end{equation}
By the Lagrange multiplier theorem, there exist $\lambda, \theta,\nu \in \R$ such that
\begin{equation}\label{lema3-2}
	\mathcal{L}_{2\Pi}u = \lambda u + \theta \varphi + \nu \varphi'.
\end{equation}
From \eqref{lema3-1} and \eqref{lema3-2}, we clearly have $\lambda = 0$. Also, taking the inner product in \eqref{lema3-2} with $\varphi'$ and taking into account that $\mathcal{L}_{2\Pi}$ is self-adjoint we deduce that
 $\nu = 0$ and consequently $\mathcal{L}_{2\Pi}u = \theta \varphi$. Recalling that $\mathcal{L}_{2\Pi}\chi = \varphi$, where $\chi = -\dfrac{\partial \varphi}{\partial \omega}$, we have
$
\mathcal{L}_{2\Pi}(\theta \chi) = \theta \varphi = \mathcal{L}_{2\Pi}u
$
which implies 
$
\mathcal{L}_{2\Pi} (u - \theta \chi) = 0. 
$
Since $\text{Ker}(\mathcal{L}_{2\Pi}) = [\varphi']$, there exists $\sigma \in \R$ such that $u - \theta \chi = \sigma \varphi'$. On the other hand, using that $(\chi, \varphi)_{L^2_{per}} = -\mathsf{d}''(\omega) < 0$ (see \eqref{3a}) and $(\varphi',\varphi)_{L^2_{per}} = 0$, we obtain by \eqref{lema3-1} that
$
- \theta (\chi,\varphi)_{L^2_{per}}=(\sigma\varphi'-u,\varphi)_{L^2_{per}} = 0,
$
and hence $\theta = 0$. Thus, we conclude  that $u = \sigma \varphi'$ for some $\sigma\neq0$. But this is a contradiction because from \eqref{lema3-1},
$
	0 = (u,\varphi')_{L^2_{per}} = \sigma\|\varphi'\|_{L^2_{per}}^2\neq0.
$
\end{proof}

\begin{corollary}\label{corolario1}
	Let $\mathcal{L}_{\Pi}: \mathbb{H}^2_{per,m} \subset \mathbb{L}^2_{per,m} \rightarrow \mathbb{L}^2_{per,m}$ be defined by
	\begin{equation*}
		\mathcal{L}_{\Pi} := \left(
		\begin{array}{cc}
			\mathcal{L}_{2\Pi} & 0 \\
			0 & \mathcal{L}_{3\Pi}
		\end{array}
		\right),
	\end{equation*}
	where $\mathcal{L}_{2\Pi} $ and $ \mathcal{L}_{3\Pi}$ are given in \eqref{opconstrained123} and \eqref{opconstrained1234}, respectively. If $f=(u,v) \in \mathbb{H}^2_{per,m}$ satisfies
	\begin{equation}\label{18-02-1}
		(v,\varphi)_{L^2_{per}} = (u,\varphi)_{L^2_{per}} = (u, \varphi')_{L^2_{per}} = 0
	\end{equation}
	then there exists $\delta > 0$ such that
	\begin{equation*}
		(\mathcal{L}_{\Pi}f, f)_{\mathbb{L}^2_{per}} \geq \delta \|f\|^2_{\mathbb{L}^2_{per}}.
	\end{equation*}
\end{corollary}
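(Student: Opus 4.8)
The plan is to exploit the block-diagonal structure of $\mathcal{L}_{\Pi}$ and reduce the coercivity estimate to the two scalar results already established, namely Lemma \ref{lema-L3} and Lemma \ref{lema-xi-positivo}. Writing $f=(u,v)$ with $u,v\in H^2_{per,m}$, the diagonal form of $\mathcal{L}_{\Pi}$ immediately gives
\begin{equation*}
(\mathcal{L}_{\Pi}f,f)_{\mathbb{L}^2}=(\mathcal{L}_{2\Pi}u,u)_{L^2}+(\mathcal{L}_{3\Pi}v,v)_{L^2},
\end{equation*}
so it suffices to bound each summand from below by a positive multiple of $\|u\|_{L^2}^2$ and $\|v\|_{L^2}^2$, respectively, and then take the minimum of the two constants.

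First I would treat the second component. The hypothesis \eqref{18-02-1} imposes exactly $(v,\varphi)_{L^2}=0$, which is precisely the constraint required in Lemma \ref{lema-L3}; hence $(\mathcal{L}_{3\Pi}v,v)_{L^2}\geq \delta_2\|v\|_{L^2}^2$ with the constant $\delta_2>0$ furnished there. Next I would treat the first component. The remaining two conditions in \eqref{18-02-1}, namely $(u,\varphi)_{L^2}=(u,\varphi')_{L^2}=0$, are exactly the constraints defining the admissible class in \eqref{xi}. By homogeneity of the Rayleigh quotient $(\mathcal{L}_{2\Pi}u,u)_{L^2}/\|u\|_{L^2}^2$ (applying the definition of $\xi$ to $u/\|u\|_{L^2}$ when $u\neq 0$, the estimate being trivial when $u=0$), Lemma \ref{lema-xi-positivo} yields $(\mathcal{L}_{2\Pi}u,u)_{L^2}\geq \xi\|u\|_{L^2}^2$ with $\xi>0$.

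Combining the two bounds and setting $\delta:=\min\{\xi,\delta_2\}>0$ then gives
\begin{equation*}
(\mathcal{L}_{\Pi}f,f)_{\mathbb{L}^2}\geq \xi\|u\|_{L^2}^2+\delta_2\|v\|_{L^2}^2\geq \delta\left(\|u\|_{L^2}^2+\|v\|_{L^2}^2\right)=\delta\|f\|_{\mathbb{L}^2}^2,
\end{equation*}
which is the desired estimate, since $\|f\|_{\mathbb{L}^2}^2=\|u\|_{L^2}^2+\|v\|_{L^2}^2$.

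I do not expect any genuine obstacle at this stage: the entire analytic difficulty---establishing the strict positivity $\xi>0$ of the constrained quadratic form for $\mathcal{L}_{2\Pi}$ despite $\mathcal{L}_{2\Pi}$ carrying one negative direction---was already overcome in Lemma \ref{lema-xi-positivo} through the convexity $\mathsf{d}''(\omega)>0$ (Proposition \ref{convexityd2}) and the spectral decomposition of $L^2_{per,m}$. The only points that must be checked carefully are purely bookkeeping: that the three orthogonality conditions in \eqref{18-02-1} distribute correctly between the two lemmas componentwise (one condition for $v$, two for $u$), and that the passage from the normalized infimum in \eqref{xi} to an arbitrary $u\in H^2_{per,m}$ is legitimate by scaling.
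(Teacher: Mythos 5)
Your proposal is correct and follows essentially the same route as the paper's own proof: split the quadratic form along the diagonal blocks, apply Lemma \ref{lema-L3} to the $v$-component and Lemma \ref{lema-xi-positivo} to the $u$-component, and take $\delta$ as the minimum of the two constants. Your explicit remark about passing from the normalized infimum $\xi$ to arbitrary $u$ by scaling is a detail the paper leaves implicit, but it is the same argument.
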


\begin{proof}
	Let $f=(u,v) \in \mathbb{H}^2_{per,m}$ be satisfying \eqref{18-02-1}. From Lemmas \ref{lema-L3} and \ref{lema-xi-positivo}, there exist $\delta_1, \delta_2 > 0$ such that 
	\begin{equation*}
		(\mathcal{L}_{2\Pi}\,u, u)_{L^2_{per}} \geq \delta_1 \|u\|^2_{L^2_{per}} \, \, \mbox{ and } \, \, (\mathcal{L}_{3\Pi}\,v, v)_{L^2_{per}} \geq \delta_2 \|v\|^2_{L^2_{per}}.
	\end{equation*}
Thus,
\begin{equation*}
	(\mathcal{L}_{\Pi}f,f)_{\mathbb{L}^2_{per}} = (\mathcal{L}_{2\Pi}\,u, u)_{L^2_{per}} + (\mathcal{L}_{3\Pi}\,v, v)_{L^2_{per}}\geq \delta_1 \|u\|^2_{L^2_{per}}+\delta_2 \|v\|^2_{L^2_{per}}.
\end{equation*}
Defining $\delta:= \min\{\delta_1, \delta_2\} > 0$, the last inequality gives the desired.
\end{proof}

\begin{lemma}\label{positividade-L}
	There exist positive constants $\alpha_1$ and $\alpha_2 $ such that
	\begin{equation*}
		(\mathcal{L}_{\Pi}f,f)_{\mathbb{L}^2_{per}} \geq \alpha_1 \|f\|^2_{\mathbb{H}^1_{per}} - \alpha_2\|f\|^2_{\mathbb{L}^2_{per}},
	\end{equation*}
for all $f = (u,v) \in \mathbb{H}^2_{per,m}$.
\end{lemma}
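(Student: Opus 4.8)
The plan is to treat the statement as an elementary G\aa rding-type estimate read directly off the explicit quadratic form of $\mathcal{L}_{\Pi}$, exploiting the fact that $\mathcal{L}_{\Pi}$ is diagonal. For $f=(u,v)\in\mathbb{H}^2_{per,m}$ I would first write
\begin{equation*}
(\mathcal{L}_{\Pi}f,f)_{\mathbb{L}^2}=(\mathcal{L}_{2\Pi}u,u)_{L^2}+(\mathcal{L}_{3\Pi}v,v)_{L^2}.
\end{equation*}
The crucial initial observation is that the nonlocal corrections defining $\mathcal{L}_{2\Pi}$ and $\mathcal{L}_{3\Pi}$ in \eqref{opconstrained123}--\eqref{opconstrained1234} are multiples of the constant function $1$, hence $L^2$-orthogonal to the zero-mean functions $u$ and $v$. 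Indeed, $\left(\frac{3}{L}\int_0^L\varphi^2u\,dx,\,u\right)_{L^2}=\frac{3}{L}\left(\int_0^L\varphi^2u\,dx\right)\int_0^Lu\,dx=0$ because $u$ has zero mean, and likewise for the $\mathcal{L}_{3\Pi}$-term. Consequently the constrained forms collapse to the unconstrained ones on $\mathbb{H}^2_{per,m}$, so that $(\mathcal{L}_{2\Pi}u,u)_{L^2}=(\mathcal{L}_2u,u)_{L^2}$ and $(\mathcal{L}_{3\Pi}v,v)_{L^2}=(\mathcal{L}_3v,v)_{L^2}$.

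Next I would integrate by parts, noting that the periodic boundary terms vanish, to arrive at
\begin{equation*}
(\mathcal{L}_{\Pi}f,f)_{\mathbb{L}^2}=\int_0^L\big(u'^2+\omega u^2-3\varphi^2u^2\big)\,dx+\int_0^L\big(v'^2+\omega v^2-\varphi^2v^2\big)\,dx.
\end{equation*}
The indefinite potential contributions are then controlled using that $\varphi$ is smooth and $L$-periodic, hence bounded: setting $M:=3\|\varphi^2\|_{L^{\infty}}$, one has $\int_0^L 3\varphi^2u^2\,dx\le M\|u\|_{L^2}^2$ and $\int_0^L\varphi^2v^2\,dx\le M\|v\|_{L^2}^2$, which gives
\begin{equation*}
(\mathcal{L}_{\Pi}f,f)_{\mathbb{L}^2}\ge \|u'\|_{L^2}^2+\|v'\|_{L^2}^2+\omega\|f\|_{\mathbb{L}^2}^2-M\|f\|_{\mathbb{L}^2}^2.
\end{equation*}

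Finally I would upgrade the homogeneous gradient term to the full $\mathbb{H}^1$ norm. Since $\|u'\|_{L^2}^2+\|v'\|_{L^2}^2+\|f\|_{\mathbb{L}^2}^2$ defines an equivalent norm on $\mathbb{H}^1_{per,m}$, there is a constant $c_0>0$ with $\|u'\|_{L^2}^2+\|v'\|_{L^2}^2+\|f\|_{\mathbb{L}^2}^2\ge c_0\|f\|_{\mathbb{H}^1}^2$; substituting and regrouping yields $(\mathcal{L}_{\Pi}f,f)_{\mathbb{L}^2}\ge c_0\|f\|_{\mathbb{H}^1}^2-(1-\omega+M)\|f\|_{\mathbb{L}^2}^2$, so that the choice $\alpha_1:=c_0$ and $\alpha_2:=1+M$ (which dominates $1-\omega+M$ since $\omega>0$) finishes the proof. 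This argument has no genuine obstacle; the only two points demanding care are the vanishing of the nonlocal terms on $L^2_{per,m}$ and the bookkeeping of the norm-equivalence constant that converts the gradient lower bound into a lower bound in the full $\mathbb{H}^1$ norm.
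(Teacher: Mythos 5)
Your proof is correct and takes essentially the same route as the paper: the same diagonal splitting, the same key observation that the nonlocal terms in $\mathcal{L}_{2\Pi}$ and $\mathcal{L}_{3\Pi}$ are constants and hence vanish when paired with zero-mean functions, followed by a G\aa rding-type lower bound for $\mathcal{L}_2$ and $\mathcal{L}_3$. The only difference is cosmetic: where the paper cites G\aa rding's inequality from Yosida, you verify it directly via integration by parts and the $L^\infty$ bound $M=3\|\varphi^2\|_{L^\infty}$, together with a correct bookkeeping of the norm-equivalence constant, which makes the argument self-contained.
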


\begin{proof}
	Let $f=(u,v) \in \mathbb{H}^2_{per,m}$ be fixed. Since $\mathcal{L}_{2}$ and $\mathcal{L}_{3}$ are second order differential operators, we obtain from G$\mathring{\text{a}}$rding's inequality (see \cite[page 175]{yosida}) that
	\begin{equation*}
		(\mathcal{L}_{2\Pi}\,u,u)_{L^2_{per}} = (\mathcal{L}_2\, u,u)_{L^2_{per}} \geq \beta_1 \|u\|_{H^1_{per}}^2 - \beta_2 \|u\|_{L^2_{per}}^2
	\end{equation*}
and
\begin{equation*}
	(\mathcal{L}_{3\Pi}\,v,v)_{L^2_{per}} = (\mathcal{L}_3 \,v,v)_{L^2_{per}} \geq \beta_3 \|v\|_{H^1_{per}}^2 - \beta_4 \|v\|_{L^2_{per}}^2.
\end{equation*}
By setting
$
	\alpha_1 = \min\{\beta_1,\beta_3\} >0$ and $\alpha_2= \max\{\beta_2,\beta_4\}>0,
$ we obtain
\begin{equation*}
	(\mathcal{L}_{\Pi}f,f)_{\mathbb{L}^2_{per}} = (\mathcal{L}_{2\Pi}\,u,u)_{L^2_{per}} + (\mathcal{L}_{3\Pi}\,v,v)_{L^2_{per}} \geq \alpha_1 \|f\|^2_{\mathbb{H}^1_{per}} - \alpha_2\|f\|^2_{\mathbb{L}^2_{per}},
\end{equation*}
which completes the proof of the lemma.
\end{proof}

\begin{remark}\label{remark1}
	Let $\omega>0$ be fixed. Recall that $\mathcal{G}: \mathbb{H}^1_{per} \rightarrow \R$ is given by 
$
		\mathcal{G} = \mathcal{E} + \omega\mathcal{F},
$
and
$
	\mathcal{G}''(\Phi): \mathbb{H}^1_{per} \rightarrow \mathbb{H}_{per}^{-1}.
$
Let  $\mathsf{b}: D(\mathsf{b})= \mathbb{H}^3_{per} \subset \mathbb{L}^2_{per} \rightarrow \R$ be the quadratic form associated to the operator $\mathcal{G}''(\Phi)$, that is, 
\begin{equation*}
	\mathsf{b}(v) = \langle \mathcal{G}''(\Phi) v, v \rangle_{\mathbb{H}^{-1}_{per}, \mathbb{H}^{1}_{per}} = (\mathcal{L} v,v)_{\mathbb{L}^2_{per}},\; \text{for all}  \; v \in D(\mathsf{b}).
\end{equation*}
Note that $\mathsf{b}$ is densely defined and, from Lemma \ref{positividade-L}, $\mathsf{b}$ is closed and bounded from below. Thus, (see, for instance, \cite[Theorem 2.6 and page 322]{katobook}) we obtain that $\mathcal{L}$ is the unique self-adjoint linear operator such that
\begin{equation*}
	\langle \mathcal{G}''(\Phi) v, z \rangle_{\mathbb{H}^{-1}_{per}, \mathbb{H}^1_{per}} = (\mathcal{L}v, z)_{\mathbb{L}^2_{per}} = (\mathcal{L}_{\Pi}v, z)_{\mathbb{L}^2_{per}},
\end{equation*}
for all $v \in \mathbb{H}^3_{per,m}$ and $z \in \mathbb{H}^1_{per,m}$.
In this case, if $\mathcal{I}: \mathbb{H}^1_{per} \rightarrow \mathbb{H}^{-1}_{per}$ is the natural injection of $\mathbb{H}^{1}_{per}$ into $\mathbb{H}^{-1}_{per}$ with respect to the inner product in ${\mathbb{L}^2}$, that is,
\begin{equation*}
	\langle \mathcal{I}u,v \rangle_{\mathbb{H}^{-1},\mathbb{H}^1} = (u,v)_{\mathbb{L}^2_{per}},\;  \text{for all} \; u,v \in \mathbb{H}^1_{per},
\end{equation*}
then
\begin{equation*}
	\mathcal{G}''(\Phi)v = \mathcal{I} \mathcal{L} v,\; \text{for all} \; v  \in \mathbb{H}^3_{per,m}.
\end{equation*}

Since $H^1_{per,m} \hookrightarrow H^1_{per}$ and  $H^{-1}_{per} \hookrightarrow H^{-1}_{per,m}$, we can write $\mathcal{I}: \mathbb{H}^1_{per,m} \rightarrow \mathbb{H}^{-1}_{per,m}$ as
\begin{equation*}
	\langle \mathcal{I}u, v \rangle_{\mathbb{H}^{-1}_{per,m}, \mathbb{H}^1_{per,m}} = \langle \mathcal{I}u, v \rangle_{\mathbb{H}_{per}^{-1}, \mathbb{H}_{per}^1} = (u,v)_{\mathbb{L}^2_{per}},\; \text{for all} \; u, v \in \mathbb{H}^{1}_{per,m}.
\end{equation*}
\end{remark}

With the above identification in mind, we may prove the following.

\begin{lemma}\label{pos-g''-l2}
	There exists $\delta^* > 0$ such that
	\begin{equation}\label{3b}
		\langle \mathcal{G}''(\Phi) f, f \rangle_{\mathbb{H}^{-1}_{per},\mathbb{H}^1_{per}} \geq \delta^* \|f\|_{\mathbb{H}^1_{per}}^2,
	\end{equation}
for all $f = (u,v) \in \mathbb{H}^1_{per,m}$ satisfying \eqref{18-02-1}
\end{lemma}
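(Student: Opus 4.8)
The plan is to upgrade the $L^2$-coercivity of Corollary \ref{corolario1} to $\mathbb{H}^1$-coercivity by trading it against the G\aa rding inequality of Lemma \ref{positividade-L}, and then to remove the extra regularity by a density argument. First, for $f=(u,v) \in \mathbb{H}^2_{per,m}$ satisfying \eqref{18-02-1}, I would invoke Remark \ref{remark1} to identify the quadratic form with $(\mathcal{L}_\Pi f, f)_{\mathbb{L}^2}$, so that it suffices to bound the latter below by a multiple of $\|f\|_{\mathbb{H}^1}^2$.

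The key algebraic step is a convex combination. Writing, for $\theta \in (0,1)$,
$$(\mathcal{L}_\Pi f, f)_{\mathbb{L}^2} = \theta (\mathcal{L}_\Pi f, f)_{\mathbb{L}^2} + (1-\theta)(\mathcal{L}_\Pi f, f)_{\mathbb{L}^2},$$
I would estimate the first summand by Lemma \ref{positividade-L} and the second by Corollary \ref{corolario1}, obtaining
$$(\mathcal{L}_\Pi f, f)_{\mathbb{L}^2} \geq \theta \alpha_1 \|f\|_{\mathbb{H}^1}^2 + \big[(1-\theta)\delta - \theta\alpha_2\big]\|f\|_{\mathbb{L}^2}^2.$$
Choosing $\theta = \delta/(\delta + \alpha_2) \in (0,1)$ makes the bracketed coefficient vanish and yields \eqref{3b} on $\mathbb{H}^2_{per,m}$ with the explicit constant $\delta^* = \alpha_1\delta/(\delta+\alpha_2) > 0$.

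It remains to pass from $\mathbb{H}^2_{per,m}$ to $\mathbb{H}^1_{per,m}$. I would denote by $Y \subset \mathbb{H}^1_{per,m}$ the closed subspace cut out by the three orthogonality conditions in \eqref{18-02-1} and argue that $Y \cap \mathbb{H}^2_{per,m}$ is dense in $Y$. Given $f \in Y$, take any sequence $g_n \in \mathbb{H}^2_{per,m}$ with $g_n \to f$ in $\mathbb{H}^1$ and correct it by subtracting suitable small multiples of the smooth (hence $\mathbb{H}^2$) functions $(\varphi,0)$, $(\varphi',0)$ and $(0,\varphi)$ so as to restore the three constraints; since the Gram matrix of $\{(\varphi,0),(\varphi',0),(0,\varphi)\}$ is nondegenerate and $g_n \to f \in Y$, the correction coefficients tend to zero, producing $\tilde g_n \in Y \cap \mathbb{H}^2_{per,m}$ with $\tilde g_n \to f$ in $\mathbb{H}^1$. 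Finally, both sides of \eqref{3b} are continuous quadratic forms on $\mathbb{H}^1_{per}$, since $\mathcal{G}''(\Phi)\colon \mathbb{H}^1_{per}\to\mathbb{H}^{-1}_{per}$ is bounded (the multiplication terms are controlled because $\varphi$ is bounded, and the constraint integrals are continuous linear functionals), so the estimate established on $\tilde g_n$ passes to the limit. I expect the main obstacle to be precisely this density step: one must check that the constraint-restoring corrections remain in $\mathbb{H}^2_{per,m}$ and become negligible in the $\mathbb{H}^1$ norm, while confirming that the limiting quadratic form is genuinely $\mathbb{H}^1$-continuous so that the inequality survives the passage to the limit.
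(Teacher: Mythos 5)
Your proposal is correct and follows essentially the same route as the paper: the paper likewise combines Corollary \ref{corolario1} with the G\aa rding inequality of Lemma \ref{positividade-L} (substituting one into the other rather than via your convex combination, but algebraically equivalently, arriving at the identical constant $\delta^* = \alpha_1\delta/(\alpha_2+\delta)$) and then concludes by density. The only cosmetic differences are that the paper runs the argument on $\mathbb{H}^3_{per,m}$ (to match the form domain in Remark \ref{remark1}) and dispatches the density step in one line, whereas you work on $\mathbb{H}^2_{per,m}$ and carry out the constraint-restoring approximation explicitly --- a step which your Gram-matrix correction handles correctly, since $(\varphi,0)$, $(\varphi',0)$, $(0,\varphi)$ are smooth, zero-mean, and mutually $\mathbb{L}^2$-orthogonal.
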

\begin{proof}
Assume first that $f = (u,v) \in \mathbb{H}^3_{per,m}$ satisfies \eqref{18-02-1}. By Corollary \ref{corolario1}, we have
	\begin{equation*}
		\frac{1}{\delta}(\mathcal{L}_{\Pi}f,f)_{\mathbb{L}^2_{per}} \geq \|f\|_{\mathbb{L}^2_{per}}^2.
	\end{equation*}
So, by Lemma \ref{positividade-L}, we obtain
\begin{equation*}
	(\mathcal{L}_{\Pi}f,f)_{\mathbb{L}^2_{per}} \geq \alpha_1 \|f\|_{\mathbb{H}^1_{per}}^2 - \alpha_2\|f\|_{\mathbb{L}^2_{per}}^2 \geq \alpha_1 \|f\|_{\mathbb{H}^1_{per}}^2 - \frac{\alpha_2}{\delta}(\mathcal{L}_{\Pi}f,f)_{\mathbb{L}^2_{per}},
\end{equation*}
implying that
\begin{equation*}
	\left( 1 + \dfrac{\alpha_2}{\delta} \right) (\mathcal{L}_{\Pi}f,f)_{\mathbb{L}^2_{per}} \geq \alpha_1 \|f\|_{\mathbb{H}^1_{per}}^2.
\end{equation*}
Defining $\delta^* := \dfrac{\alpha_1 \delta}{\alpha_2 + \delta} > 0$ we see that \eqref{3b} holds
for any $f \in \mathbb{H}^3_{per,m}$. By density, we obtain the desired result. 
\end{proof}

\subsection{Stability of the cnoidal waves}

	Notice that Lemma \ref{pos-g''-l2} establishes the positivity of $\mathcal{G}''(\Phi)$ under the orthogonality condition \eqref{18-02-1}, which holds in $L^2_{per}$. However, this is enough to obtain the same positivity under the orthogonality condition in $H^1_{per}$ (see \cite[Lemma 4.12]{NP2015}). This allows to show that, for some positive constant $M$, the function
	$$
	V(v) = \mathcal{G}(v) - \mathcal{G}(\Phi) - M (\mathcal{F}(v) - \mathcal{F}(\Phi))^2
	$$
	is a Lyapunov function for the orbit $\Omega_\Phi$ (see \cite[Definition 4.14]{NP2015} for further details). With such a function in hand, we are able to prove the following.

\begin{theorem}[Orbital stability of the cnoidal waves for the NLS equation]\label{teo-nsl}
Let $L>0$ be fixed. If $\omega \in (0,\omega^*)$, where $\omega^*$ is given in Proposition \ref{propn1} and $\varphi=\varphi_\omega$ is the cnoidal  solution given in Proposition \ref{cnoidalsol2} then the standing wave 
$$
\Theta(x,t):=
\begin{pmatrix}
\varphi(x) \cos(\omega t) \\ 
\varphi(x) \sin(\omega t)
\end{pmatrix}
$$
is orbitally stable in  $\mathbb{H}^1_{per,m}$ in the sense of Definition $\ref{defistabilitystanding}$.
\end{theorem}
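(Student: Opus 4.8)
The plan is to run the Lyapunov/modulation argument of \cite{NP2015} and \cite{stuart}, the essential analytic input being the coercivity of $\mathcal{G}''(\Phi)$ furnished by Lemma \ref{pos-g''-l2}. The starting point is Theorem \ref{wellposednessSchrodinger}: it guarantees that the Cauchy problem is globally well-posed in $\mathbb{H}^1_{per,m}$, with $\mathcal{E}$ and $\mathcal{F}$ conserved and with continuous dependence on the data, which is exactly what makes Definition \ref{defistabilitystanding} meaningful (solutions exist for all $t\ge 0$) and what powers the final contradiction argument. Since $\mathcal{G}=\mathcal{E}+\omega\mathcal{F}$ and $\mathcal{G}'(\Phi)=0$, one has $\mathcal{E}'(\Phi)=-\omega\mathcal{F}'(\Phi)$; moreover $\mathcal{F}$ is quadratic, so the functional $V(v)=\mathcal{E}(v)-\mathcal{E}(\Phi)+M(\mathcal{F}(v)-\mathcal{F}(\Phi))^2$ is conserved along the flow, invariant under $T_1(\theta)$ and $T_2(r)$, hence constant on $\Omega_\Phi$ with $V(\Phi)=0$.

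First I would set up the modulation near the orbit. For $v\in\mathbb{H}^1_{per,m}$ with $d(v,\Omega_\Phi)$ small, the implicit function theorem produces parameters $\theta=\theta(v)$, $r=r(v)$ realizing the infimum defining $d(v,\Omega_\Phi)$, and setting $\Phi+h:=T_1(-\theta)T_2(-r)v$ gives $d(v,\Omega_\Phi)=\|h\|_{\mathbb{H}^1}$ together with orthogonality of $h=(u_h,v_h)$ to the infinitesimal generators $T_1'(0)\Phi=(0,-\varphi)$ and $T_2'(0)\Phi=(\varphi',0)$. This yields $(v_h,\varphi)_{L^2}=0$ and $(u_h,\varphi')_{L^2}=0$, i.e. two of the three conditions in \eqref{18-02-1}. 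The remaining condition $(u_h,\varphi)_{L^2}=0$, namely orthogonality to $\mathcal{F}'(\Phi)=(\varphi,0)$, is precisely the direction that carries the unique negative eigenvalue of $\mathcal{L}_{\Pi}$ (recall $\text{n}(\mathcal{L}_{\Pi})=1$, $z(\mathcal{L}_{\Pi})=2$), and compensating it is the job of the penalty $M(\mathcal{F}-\mathcal{F}(\Phi))^2$. At this point I would also upgrade the $L^2$-orthogonality coercivity of Lemma \ref{pos-g''-l2} to the corresponding $H^1$-orthogonality statement exactly as in \cite[Section 4]{NP2015}.

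The core step is the coercivity estimate: for a suitable $M$ there exist $C,\varepsilon_0>0$ such that $V(v)\ge C\,d(v,\Omega_\Phi)^2$ whenever $d(v,\Omega_\Phi)<\varepsilon_0$, up to a conserved term that vanishes on the orbit. Using $\mathcal{E}'(\Phi)=-\omega\mathcal{F}'(\Phi)$ and the identity $\mathcal{F}(\Phi+h)-\mathcal{F}(\Phi)=(\varphi,u_h)_{L^2}+\tfrac12\|h\|_{L^2}^2$, I would expand $V(\Phi+h)=\tfrac12\langle\mathcal{G}''(\Phi)h,h\rangle+M(\mathcal{F}(\Phi+h)-\mathcal{F}(\Phi))^2-\omega(\mathcal{F}(\Phi+h)-\mathcal{F}(\Phi))+\mathcal{R}(h)$, where the $-\omega(\mathcal{F}-\mathcal{F}(\Phi))$ piece is conserved and $\mathcal{R}(h)=o(\|h\|_{\mathbb{H}^1}^2)$ collects the cubic and quartic remainders of the quartic nonlinearity. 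Splitting $h=a(\varphi,0)+h^\perp$ with $h^\perp$ meeting all three conditions of \eqref{18-02-1}, Lemma \ref{pos-g''-l2} gives $\langle\mathcal{G}''(\Phi)h^\perp,h^\perp\rangle\ge\delta^*\|h^\perp\|_{\mathbb{H}^1}^2$, while the quadratic-in-$a$ contribution of the $\mathcal{F}$-penalty dominates the single negative direction $(\varphi,0)$ once $M$ is large, precisely because $\mathsf{d}''(\omega)>0$ (Proposition \ref{convexityd2}, cf. the computation $\langle\mathcal{L}_{2\Pi}\chi,\chi\rangle=-\mathsf{d}''(\omega)$); the cross terms are absorbed by Young's inequality and $\mathcal{R}(h)$ by shrinking $\varepsilon_0$. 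The hard part is exactly this balance: controlling the $\mathcal{F}'(\Phi)=(\varphi,0)$ direction, which the modulation leaves untouched, against the $\mathcal{F}$-penalty with a constant uniform in the neighborhood, while keeping the conserved linear-in-$\mathcal{F}$ term out of the way — this is where the convexity $\mathsf{d}''(\omega)>0$ and the spectral count $\text{n}(\mathcal{L}_{\Pi})=1$ are indispensable.

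Finally I would conclude by contradiction. If $\Theta$ were orbitally unstable, there would exist $\varepsilon\in(0,\varepsilon_0)$, initial data $u_0^n\to\Phi$ in $\mathbb{H}^1_{per,m}$, and first exit times $t_n$ with $d(u^n(t_n),\Omega_\Phi)=\varepsilon$, using continuity of the global flow and of $d$. Conservation gives $V(u^n(t_n))=V(u_0^n)\to V(\Phi)=0$ and $\mathcal{F}(u^n(t_n))=\mathcal{F}(u_0^n)\to\mathcal{F}(\Phi)$, so the conserved $\mathcal{F}$-contributions vanish in the limit; the coercivity estimate then forces $V(u^n(t_n))\ge C\varepsilon^2>0$, a contradiction. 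Hence $\Theta$ is orbitally stable in $\mathbb{H}^1_{per,m}$ in the sense of Definition \ref{defistabilitystanding}.
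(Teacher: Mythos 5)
Your proposal is correct and takes essentially the same route as the paper: the paper's own proof of Theorem \ref{teo-nsl} simply defers to \cite[Theorem 4.17]{NP2015} (and \cite{stuart}), using exactly the Lyapunov functional $V(v)=\mathcal{E}(v)-\mathcal{E}(\Phi)+M\left(\mathcal{F}(v)-\mathcal{F}(\Phi)\right)^2$ together with Theorem \ref{wellposednessSchrodinger} and the coercivity of $\mathcal{G}''(\Phi)$ from Lemma \ref{pos-g''-l2} upgraded to $H^1$-orthogonality. Your modulation, the decomposition $h=a(\varphi,0)+h^{\perp}$ with the penalty term absorbing the $(\varphi,0)$-direction, and the final contradiction via conservation are precisely the details the paper omits.
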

\begin{proof}
	This proof is similar to the proof of \cite[Theorem 4.17]{NP2015}, which strongly relies on the fact that $V$ (defined from $\mathcal{G}$) is a Lyapunov functional.
\end{proof}

\section*{Appendix - The Galerkin method and the existence of global weak solutions} In this appendix we present some complementary facts concerning the Galerkin approximation used in the proof of Theorem $\ref{wellposednessSchrodinger}$ (and Theorem $\ref{wellposedness1}$). Since the existence of global solutions is a crucial step in order to apply the stability method as showed in Section 6, we only present the additional facts for the equation $(\ref{KF1})$. Similar arguments can be done for  equation $(\ref{KF2})$, but it is important to mention that the orbital instability in Section 5 can be determined using the existence of local solutions. Global solutions are not necessary in that case.\\

\textit{I - A priori estimates.} Replacing $w_j$ by $u_n(t)\in V_n$ in the approximate problem $(\ref{approblem})$, we obtain after some basic calculations (well known in the case of NLS) that

\begin{equation}\label{est12}
	||u_n(t)||_{L_{per}^2}^2=||u_{0n}||_{L_{per}^2}^2,
\end{equation}
for all $t\in [0,t_n]$, where $t_n$ is the maximum time of existence obtained by Caratheodory's Theorem. On the other hand, replacing $w_j$ by $u_{n,t}(t)\in V_n$ in the same problem, we also obtain
 
 \begin{equation}\label{est13}
 	\mathcal{E}(u_n(t))=\mathcal{E}(u_{0n}),
 	\end{equation}
 for all $t\in [0,t_n]$, where $\mathcal{E}$ is defined as in $(\ref{quantEF1})$. These a priori estimates allows to extend the solution $u_n(t)$ to the whole interval $[0,T]$. Thus, using $(\ref{est12})$ and $(\ref{est13})$ combined with the Gagliardo-Nirenberg inequality for periodic domains, we obtain
 \begin{equation}\label{est14}
 ||u_{n,x}(t)||_{L_{per}^2}^2\leq 2\mathcal{E}(u_{0n})+ C||u_{n,x}(t)||_{L_{per}^2}||u_{n0}||_{L_{per}^2}^3+C||u_{n0}||_{L_{per}^2}^4.	
 	\end{equation}
 Combining $(\ref{est12})$ and $(\ref{est14})$, it follows that for each $T>0$, we have
 \begin{equation}\label{est15}
 	\{u_{n}\}\ \mbox{is bounded in}\ L^{\infty}(0,T;H_{per,m}^1),
 	\end{equation}
 for all $n\in \mathbb{Z}\backslash\{0\}$.\\
 \indent Next, using a standard argument of duality, we also obtain 
 \begin{equation}\label{est16}
 	\{u_{n,t}\}  \ \mbox{is bounded in}\ L^{\infty}(0,T;H_{per,m}^{-1}),
 \end{equation}
for all $n\in \mathbb{Z}\backslash\{0\}$.\\
\indent \textit{II - Passage to the limit.} Using $(\ref{est15})$ and $(\ref{est16})$, we obtain in particular that
\begin{equation}\label{est17}
	\{u_{n}\}\ \mbox{is bounded in}\ L^{2}(0,T;H_{per,m}^1),
\end{equation}
and
\begin{equation}\label{est18}
\{u_{n,t}\} \ \mbox{is bounded in}\ L^{2}(0,T;H_{per,m}^{-1}),
\end{equation}
for all $n\in \mathbb{Z}\backslash\{0\}$.\\
\indent Thus, up to a subsequence, the following basic convergences are obtained
\begin{eqnarray}\label{est19}\displaystyle\begin{array}{llll}
		u_n\rightharpoonup u\ \ \
		\textrm{weak in\ } \ \ L^2(0,T;H^1_{per,m}), \\
		u_{n,t}\rightharpoonup u_t\ \ \ \textrm{weak in\ } \ \
		L^2(0,T;H^{-1}_{per,m}), \\
		u_n\stackrel{\star}{\rightharpoonup}u\ \ \ \textrm{weak-$*$ in\ } \ \
		L^{\infty}(0,T;H^1_{per,m}), \\
		u_{n,t}\stackrel{\star}{\rightharpoonup}u_t\ \ \ \textrm{weak-$*$ in\ } \ \ L^{\infty}(0,T;H^{-1}_{per,m}). \\
		
	\end{array}
\end{eqnarray} 
The above convergences are enough to pass to the limit in the linear terms of the approximate problem in $(\ref{approblem})$. To handle with the nonlinear term, we need to use $(\ref{est17})$, $(\ref{est18})$ and Aubin-Lions Theorem to obtain that $u_n\rightarrow u$ strongly in $L^2(0,T;L_{per,m}^2)$. In particular and by Fubini's Theorem, we have
\begin{equation}\label{est20}
	u_n\rightarrow u\ \mbox{strongly in}\ L^2(0,T;L_{per}^2)=L^2([0,L]\times (0,T)),
	\end{equation}
as $n\rightarrow +\infty$. Since convergence in $(\ref{est20})$ also implies that $u_n\rightarrow u$ a.e. in $[0,L]\times (0,T)$, we obtain by the fact $Q(s)=|s|^2s$ is a complex continuous function that
$
	|u_n|^2u_n\rightarrow |u|^2u\ \mbox{a.e. in}\ [0,L]\times (0,T)
$
as $n\rightarrow +\infty$. An application of the dominated convergence theorem so implies
\begin{equation}\label{est21}
|u_n|^2u_n\rightarrow |u|^2u\ \mbox{strongly in}\ L^2(0,T;L_{per}^2).
\end{equation}
\indent Using convergences $(\ref{est19})$ and $(\ref{est21})$, it is possible to pass to the limit in  (\ref{approblem}) to obtain the existence of a global weak solution $u\in L^{\infty}(0,T;H_{per,m}^1)$ with $u_t\in L^{\infty}(0,T; H_{per,m}^{-1})$. It is important to mention that $|u|^2u$ in $(\ref{est21})$ is not an element of $L^2(0,T;L_{per,m}^2)$ (we guarantee only that it is an element of the entire space $L^2(0,T;L_{per}^2)$). However, this fact is irrelevant to obtain that $u$ is a weak solution as in the statement of Theorem $\ref{wellposednessSchrodinger}$. Indeed, the convergence in $(\ref{est21})$ implies 
\begin{equation}\label{est22}
	\int_0^L|u_n(\cdot,t)|^2u_n(\cdot,t)\overline\phi dx\rightarrow\int_0^L|u(\cdot,t)|^2u(\cdot,t)\overline\phi dx,\ \textrm{a.e.}\ t\in [0,T],
	\end{equation}
for all $\phi\in H_{per,m}^1$.\\ 
\indent We can
easily check that the initial conditions and the uniqueness are also satisfied. The fact that $\mathcal{E}$ and $\mathcal{F}$ are conserved quantities can be determined using the uniqueness of global solutions combined with a rudimentary calculation. This last fact is useful in order to prove, in fact that $u\in C([0,T];H_{per,m}^1)$ with $u_t\in C([0,T]; H_{per,m}^{-1})$. 
 
 \section*{Acknowledgments}

G.B. Moraes is supported by CAPES/Brazil - Finance Code 001. F. Natali is partially supported by CNPq/Brazil (grant 303907/2021-5). A. Pastor is partially supported by CNPq/Brazil (grant 303762/2019-5).

\section*{Conflict of interest} The authors declare that they have no conflict
of interest.

\end{document}